\documentclass[12pt,a4paper]{amsart}

\usepackage{amssymb}
\usepackage{esint}
\usepackage[numeric, abbrev, nobysame]{amsrefs}
\usepackage{amscd}

\setlength{\textwidth}{418pt}
\setlength{\oddsidemargin}{17.5pt}
\setlength{\evensidemargin}{17.5pt}

\def\frak{\mathfrak}
\def\Bbb{\mathbb}
\def\Cal{\mathcal}

\let\phi\varphi

\newcommand{\x}{\times}
\renewcommand{\o}{\circ}

\newcommand{\al}{\alpha}
\newcommand{\be}{\beta}

\newcommand{\ka}{\kappa}

\newcommand{\om}{\omega}
\newcommand{\ph}{\phi}

\newcommand{\si}{\sigma}

\newcommand{\Ga}{\Gamma}
\newcommand{\La}{\Lambda}
\newcommand{\Ph}{\Phi}

\newcommand{\Om}{\Omega}

\newcommand{\vol}{\operatorname{vol}}
\newcommand{\im}{\operatorname{im}}

\newcommand{\id}{\operatorname{id}}

\newcommand{\ad}{\operatorname{ad}}

\newcommand{\tr}{\operatorname{tr}}

\newcounter{theorem}
\numberwithin{theorem}{section}
\numberwithin{equation}{section}

\newtheorem{thm}[theorem]{Theorem}
\newtheorem*{thm*}{Theorem \thesubsection}
\newtheorem{lemma}[theorem]{Lemma}
\newtheorem{prop}[theorem]{Proposition}
\newtheorem{cor}[theorem]{Corollary}
\newtheorem*{lemma*}{Lemma \thesubsection}
\newtheorem*{prop*}{Proposition \thesubsection}
\newtheorem*{cor*}{Corollary \thesubsection}

\theoremstyle{definition}
\newtheorem{definition}[theorem]{Definition}
\newtheorem*{definition*}{Definition \thesubsection}

\newtheorem*{example*}{Example \thesubsection}
\theoremstyle{remark}

\newtheorem*{remark*}{Remark \thesubsection}

\def\sideremark#1{\ifvmode\leavevmode\fi\vadjust{\vbox to0pt{\vss
 \hbox to 0pt{\hskip\hsize\hskip1em
 \vbox{\hsize3cm\tiny\raggedright\pretolerance10000
  \noindent #1\hfill}\hss}\vbox to8pt{\vfil}\vss}}}%
                        
                                                   %

\begin{document}
\renewcommand{\today}{} 

\title{A Poisson transform adapted to the Rumin complex}

\author{Andreas \v Cap, Christoph Harrach, and Pierre Julg}

\address{A.\v C. and C.H. : Faculty of Mathematics\\
  University of Vienna\\
  Oskar--Mor\-gen\-stern--Platz 1\\
  1090 Wien\\
  Austria\\
  P.J.: MAPMO UMR 7349, F\'{e}d\'{e}ration Denis Poisson \\
Universit\'{e} d'Orl\'{e}ans, Coll\'{e}gium Sciences et Techniques \\
B\^{a}timent de math\'{e}\-ma\-tiques - Route de Chartres \\
B.P. 6759, 45067 Orl\'{e}ans cedex 2 \\
France} 

\email{Andreas.Cap@univie.ac.at} 
\email{Christoph.Harrach@univie.ac.at}
\email{Pierre.Julg@univ-orleans.fr}

\date{April 23, 2020} 

\subjclass{primary: 53C35; secondary: 43A85, 53C15, 58J10}

\begin{abstract}
  Let $G$ be a semisimple Lie group with finite center, $K\subset G$ a maximal
  compact subgroup, and $P\subset G$ a parabolic subgroup. Following ideas of P.Y.\
  Gaillard, one may use $G$-invariant differential forms on $G/K\x G/P$ to construct
  $G$-equivariant Poisson transforms mapping differential forms on $G/P$ to
  differential forms on $G/K$. Such invariant forms can be constructed using finite
  dimensional representation theory. In this general setting, we first prove that the
  transforms that always produce harmonic forms are exactly those that descend from
  the de Rham complex on $G/P$ to the associated Bernstein-Gelfand-Gelfand (or BGG)
  complex in a well defined sense.
  
  The main part of the article is devoted to an explicit construction of such
  transforms with additional favorable properties in the case that
  $G=SU(n+1,1)$. Thus $G/P$ is $S^{2n+1}$ with its natural CR structure and the
  relevant BGG complex is the Rumin complex, while $G/K$ is complex hyperbolic space
  of complex dimension $n+1$. The construction is carried out both for complex and
  for real differential forms and the compatibility of the transforms with the
  natural operators that are available on their sources and targets are analyzed in
  detail.
\end{abstract}

\thanks{First and second author supported by project P27072--N25 of the Austrian
  Science Fund (FWF)}

\maketitle

\pagestyle{myheadings}\markboth{\v Cap, Harrach and Julg}{Poisson transforms adapted
  to the Rumin complex}

\section{Introduction}\label{1}
The hyperbolic spaces over $\Bbb K:=\Bbb R$, $\Bbb C$ or $\Bbb H$ together with the
boundary spheres at infinity provide the simplest examples of compactifications of
Riemannian symmetric spaces of the non--compact type. The realizations of the spheres
$S^n$, $S^{2n+1}$, and $S^{4n+3}$ as the boundary at infinity correspond to certain
geometric structures on the spheres. To describe these explicitly, let us denote by
$G$ the special unitary group of a non--degenerate Hermitian form on $\Bbb K^{n+2}$
of Lorentzian signature $(n+1,1)$. Thus $G$ is $SO(n+1,1)$, $SU(n+1,1)$ and
$Sp(n+1,1)$ for $\Bbb K=\Bbb R$, $\Bbb C$ and $\Bbb H$, respectively. Further, let
$K\subset G$ be the stabilizer of a fixed one--dimensional $\Bbb K$--subspace in
$\Bbb K^{n+2}$ on which the Hermitian form is negative definite. Then $K$ acts
trivially on the given line, so it can also be viewed as the stabilizer of a negative
vector, and it turns out that $K\subset G$ is a maximal compact subgroup.

Since $G$ acts transitively on the space of negative lines, viewed as a subspace in
$\Bbb KP^{n+1}$, we can view $G/K$ as the space of these negative lines. The boundary
of this space in $\Bbb KP^{n+1}$ can be identified with the space of isotropic lines,
so again this is a homogeneous space of $G$. The stabilizer $P\subset G$ of a fixed
isotropic line in $\Bbb K^{n+2}$ is well known to be the unique non--trivial
parabolic subgroup of $G$, so $G/P$ realizes the boundary sphere as a generalized
flag variety of $G$.

The spaces $G/P$ are the homogeneous models of so--called \textit{parabolic
  geometries} of type $(G,P)$, which are the geometric structures alluded to
above. According to the different choices of $\Bbb K$, this gives rise to the locally
flat conformal structure on $S^n$, the spherical CR structure on $S^{2n+1}$ and the
locally flat quaternionic contact structure on $S^{4n+3}$. These structures can be
realized as ``infinities'' of the hyperbolic metric, which is a Riemannian metric, a
K\"ahler metric, or a quaternion K\"ahler metric of constant curvature according to
the choice of $\Bbb K$. The construction of such geometries at infinity extends to
more general Einstein metrics of the types mentioned above, see \cite{Biquard}.

These boundary spheres can be used to construct eigenfunctions for the Laplace
operator on the corresponding Riemannian symmetric space. This is done by the
\textit{Poisson transform}, which is an integral operator assigning to every
continuous function on the boundary $G/P$ its average over the action of $K$,
c.f. \cite{Helgason_GASS}*{II.3.4}. It turns out that each eigenfunction of the
Laplace operator is obtained in this way and that its asymptotic behaviour towards
the boundary recollects the initial data. Due to its importance the Poisson transform
was generalized to map between sections of vector bundles in \cite{vanderVen} for the
case of symmetric spaces of real rank $1$ and independently in \cite{Yang} and
\cite{Olbrich} for arbitrary symmetric spaces of noncompact type.

A variant of the Poisson transform in the case of realy hyperbolic space
$\Cal H_{\Bbb R}^{n+1}$ was introduced by P.Y.\ Gaillard in
\cite{Gaillard:real}. Gaillard's construction does not only work on functions (or
densities) but on general differential forms. The basic idea of this construction is
to start with differential forms on the product $\Cal H_{\Bbb R}^{n+1}\x S^n$, for
which there is a natural notion of bidegree. Gaillard used geometric ideas going back
to Thurston to construct a $G$--invariant differential form $\pi_k$ of bidegree
$(k,n-k)$ on this product for each $k=0,\dots,n$. Given a $k$--form
$\al\in\Om^k(S^n)$, one can pull it back to the product, wedge it with $\pi_k$ to
obtain a form of bidegree $(k,n)$ and then integrate over the fibers to obtain a
$k$--form on $\Cal H_{\Bbb R}^{n+1}$, which is then defined to be the Poisson
transform of $\al$.

A simple computation shows that $G$--invariance of $\pi_k$ implies that the resulting
Poisson transform defines a $G$--equivariant map $\Om^k(S^n)\to\Om^k(\Cal H_{\Bbb
  R}^{n+1})$. The advantage of this construction is that one can directly relate the
transform to the exterior derivatives on both factors and to the codifferential and
the Laplacian on $\Om^*(\Cal H_{\Bbb R}^{n+1})$. In particular, the transform always
produces co--closed, harmonic forms on $\Cal H_{\Bbb R}^{n+1}$. Gaillard's
construction was partly generalized to the case of complex hyperbolic space in
\cite{Gaillard:complex}, but things get much more involved there and the results are
much less satisfactory.

In joint research of the first and third author, it was observed that there is a
purely algebraic approach to the construction of invariant forms on the product space
for each choice of $\Bbb K$. This is based on the rather simple observation that $G$
acts transitively on the product $G/K\x G/P$, so this can be realized as $G/M$, where
$M=K\cap P$. Consequently, describing $G$--invariant forms on the product reduces to
questions of finite dimensional representation theory of $M$. This not only refers to
the determination of invariant forms but also to the study of the relation between
the Poisson transforms determined by invariant forms on $G/M$ and natural operations
on differential forms on $G/P$ and $G/K$, respectively. This approach was worked out
in a general setting (involving arbitrary generalized flag varieties and allowing
also forms with values in certain vector bundles) in the PhD thesis of the second
author \cite{Harrach:Diss}, see also \cite{Harrach:Srni} and \cite{Harrach:BGG}.

Considering this general setting, it quickly turns out that the case of real
hyperbolic space as treated by Gaillard is deceptively simple. In this case, the
invariant forms used to define the Poisson transform are essentially unique up to
scale. Already in slightly more complicated cases, there is a large supply of
invariant forms and thus of possible Poisson transforms, and the problem rather is to
``design'' Poisson transforms with favorable properties. An important ingredient in
that direction is that apart from the exterior derivative, there are additional
invariant differential operators on differential forms on $G/P$. These can be used to
pass from the de Rham complex to the so-called Bernstein--Gelfand--Gelfand complex
(or BGG complex), which also computes the (twisted) de Rham cohomology. While the BGG
complex involves operators of order bigger than one, it has the advantage that all
the bundles in the complex are associated to irreducible representations of $P$ (and
thus of $M$). In contrast, in the (twisted) de Rham complex one typically meets
bundles that are induced by representations that are indecomposable but not
irreducible.

The CR--sphere provides the simplest example in which there is such a BGG refinement
of the standard real or complex de Rham complex. It turns out that in this case the
refinement does not really depend on the CR structure but only on the underlying
contact structure. A construction of this complex on arbitrary contact manifolds
(without using representation theory or touching the question of invariance) was
given by M.\ Rumin in \cite{Rumin} and therefore it is known as the Rumin complex.

The relation between the de Rham complex and the BGG complex is slightly intricate,
since the latter is a sub--quotient of the former. There is a calculus that can be
used to construct BGG complexes (and also their analogs for curved parabolic
geometries, see \cite{CSS-BGG}) which allows to identify conditions on a Poisson
transform which ensures that, while still acting on differential forms, it descends
to a transform defined on the BGG complex. It turns out that in the general setting,
this property is closely related to the fact that the values of the transform are
harmonic forms. The main purpose of this article is to explicitly construct such a
Poisson transform between the Rumin complex on the CR sphere and harmonic forms on
complex hyperbolic space and study some of its properties. The construction of this
transform does depend on the CR structure on the sphere (and not only on its contact
structure) and require a rather careful analysis of the invariant differential forms
available in this setting.

Poisson transforms defined on the Rumin complex play an important role in a long-term
project of the third author. This aims at using the first part of a BGG complex
together with carefully chosen Poisson transforms defined on the middle degree space
in order to get an index one complex whose class in Kasparov $KK$-theory is the
so-called $\gamma$ element. This is a crucial step towards the proof of the
Baum-Connes conjecture with coefficients in the case of simple Lie groups of rank
one. See \cite{Julg-Kasparov} for the case of $SU(n,1)$ and \cite{Julg} in
general. Apart from their intrinsic interest, some of the results in this article
should also be viewed as providing a basis for carrying out this program in the case
under consideration.

\section{Poisson transforms on differential forms}\label{2}

The first part of this section describes a general scheme for constructing Poisson
transforms. For a semisimple Lie group $G$, these transforms map differential forms
on a generalized flag variety $G/P$ to differential forms on the quotient $G/K$ by
the maximal compact subgroup, which is a Riemannian symmetric space of the
non-compact type. These transforms are induced by $G$-invariant differential forms on
the homogeneous space $G/M$, where $M=K\cap P$, that admit a description in terms of
finite dimensional representation theory. In this general setting, we describe a
characterization of those transforms whose images consist of harmonic forms on $G/K$,
which provides a link to the machinery of BGG sequences on $G/P$. 

In the later parts of the article, we specialize to the case that $G=SU(n+1,1)$ and
$P$ is the unique parabolic subgroup of $G$.  We give elementary explicit
descriptions of the structures needed to study Poisson transforms in that case later.

\subsection{The general setup and Poisson transforms}\label{2.0}\label{3.1}
In this general part we keep things rather short and abstract, they will be made
explicit in the special case that $G=SU(n+1,1)$ soon. The basic idea to define
Poisson transforms via invariant forms on a product goes back to
\cite{Gaillard:real}, the general version was studied in
\cite{Harrach:Diss}. Consider a non-compact semisimple Lie group $G$ with finite
center, let $P\subset G$ be a parabolic subgroup and let $K\subset G$ be the maximal
compact subgroup. Then $G/P$ is a generalized flag variety of $G$ and it is well
known that the restriction of the natural $G$-action on $G/P$ to the subgroup $K$ is
transitive. Thus $G/P$ can be identified with $K/M$, where $M:=K\cap P$, which in
particular shows that $G/P$ is compact. On the other hand, $G/K$ is a Riemannian
symmetric space of the non-compact type, and transitivity of the $K$-action on $G/P$
shows that $G$ acts transitively on $G/K\x G/P$. Thus, the product can be identified
with the homogeneous space $G/M$.

Let us denote the resulting projections from $G/M$ onto the two factors by $\pi_K$
and $\pi_P$, respectively. Correspondingly, the tangent bundle of $G/M$ decomposes as
$T'\oplus T''$ where $T'=\pi_K^*T(G/K)$ and $T''=\pi_P^*T(G/P)$, implying that there
is a well defined notion of bidegree for differential forms on $G/M$. Explicitly, a
$k$--form $\al$ on $G/M$ is of bidegree $(i,j)$ with $i+j=k$ if for entries, which
are either from $T'$ or from $T''$, it vanishes unless there are exactly $i$ entries
from $T'$ and $j$ entries from $T''$. We will indicate the resulting decomposition of
forms as $\Om^k(G/M)=\oplus_{i+j=k}\Om^{(i,j)}(G/M)$.

In particular, for $\al\in\Om^k(G/P)$, the pullback $\pi_P^*\al$ lies in
$\Om^{(0,k)}(G/M)$. Given a form $\phi\in\Om^{(\ell,N-k)}(G/M)$, where $N=\dim(G/P)$,
we can form the wedge product $\phi\wedge\pi_P^*\al\in\Om^{(\ell,N)}(G/M)$. This form
can be integrated over the fibers of the projection $G/M\to G/K$ to define an
$\ell$-form on $G/K$. Thus we obtain an integral operator
\begin{equation}\label{eq:Poiss-def}
 \Phi: \Om^k(G/P) \to \Om^\ell(G/K) \qquad\qquad \al\mapsto \fint_{G/P} \phi \wedge
 \pi_P^*\alpha. 
\end{equation}
It is easy to show that $\Phi$ is $G$--equivariant if and only if the form $\phi$ is
$G$--invariant.

\begin{definition}\label{def3.1}
  Let $\phi\in\Om^{(\ell,N-k)}(G/M)$ be a $G$--invariant differential form. Then the
  corresponding $G$--equivariant operator $\Phi:\Om^k(G/P) \to \Om^{\ell}(G/K)$ from
  \eqref{eq:Poiss-def} is called a \emph{Poisson transform} and $\phi$ is called its
  \emph{Poisson kernel}.
\end{definition}

Therefore, a Poisson transform in the sense of this definition is characterized by
its $G$-invariant Poisson kernel $\phi$. In turn, by Theorem 1.4.4 of \cite{book},
$\phi$ is fully determined by its value $\phi(eM)$ at the origin $eM\in G/M$, which is
an $M$--invariant element in the corresponding finite dimensional $M$--representation
$\La^*(\frak g/\frak m)^*$.

The basic advantage of working with differential forms is that it makes several
natural operations available, which of course include the exterior derivatives on the
source and the target of the transform. The advantage of the construction via forms on
$G/M$ is that such operations can be described nicely in terms of operations on
Poisson kernels, which we will do for $G=SU(n+1,1)$ below. On the Riemannian
symmetric space $G/K$, we also have the Hodge star operator, the codifferential and
the Laplace Beltrami operator available. On the generalized flag variety $G/P$ there
are some less well know natural operations that we will discuss next.

\subsection{The codifferential and BGG sequences}\label{2.0a}
It is well known that $T(G/P)$ is the homogeneous vector bundle $G\x_P(\frak g/\frak
p)$. Now the Killing form induces a $P$-equivariant duality between $\frak g/\frak p$
and the nilradical $\frak p_+$ of $\frak p$. Thus, the bundles $\La^kT^*(G/P)$ of
differential forms are the homogeneous bundles associated to the representations
$\La^k\frak p_+$ of $P$, which are the chain spaces of the Lie algebra $\frak p_+$
with coefficients in the trivial representation.

On these chain spaces, there is a natural Lie algebra homology differential. In the
theory of parabolic geometries, this is traditionally denoted by $\partial^*$ and
called the \textit{Kostant codifferential} and we will stick to this
tradition. Explicitly, $\partial^*:\La^k\frak p_+\to\La^{k-1}\frak p_+$ is given by
\begin{equation}\label{eq:def-codiff}
\partial^*(Z_1 \wedge \dots \wedge Z_k) = \textstyle\sum_{i<j}
(-1)^{i+j} [Z_i, Z_j] \wedge Z_1 \wedge \dots \wedge \hat{Z_i} \wedge
\dots \wedge \hat{Z_j} \wedge \dots \wedge Z_k,
\end{equation}
where the $Z_\ell$ are in $\frak p_+$ and hats denote omission. From the explicit
formula it is immediate that $\partial^*$ is $P$--equivariant, so its kernel and its
image are $P$--invariant subspaces in $\La^k\frak p_+$. Moreover, the homology spaces
$H_k(\frak p_+) := \ker(\partial^*)/\im(\partial^*)$ naturally are $P$--modules.

The $P$--homomorphisms $\partial^*$ induce $G$--equivariant bundle maps
$\Lambda^kT^*(G/P) \to \Lambda^{k-1}T^*(G/P)$. We use the symbol $\partial^*$ also
for these bundle maps and for the induced tensorial operators on differential
forms. The kernels and images of these bundle maps induce $G$-invariant subbundles of
each $\Lambda^kT^*(G/P)$. In particular, we get $\im(\partial^*) \subset
\ker(\partial^*)\subset\La^kT^*(G/P)$ and we denote by $\Cal H_k$ their quotient
bundle, which by construction is associated to the $P$-representation $H_k(\frak
p_+)$.

Let $\pi_H:\Ga(\ker(\partial^*)) \to \Ga(\Cal H_k)$ be the tensorial projection
induced by the quotient projection. The machinery of BGG sequences introduced in
\cite{CSS-BGG} and \cite{Calderbank-Diemer} is based on the construction of a natural
differential operator $L:\Ga(\Cal H_k)\to \Ga(\ker(\partial^*))$ which splits the
projection $\pi_H$. Viewed as an operator to $\Ga(\Cal H_k)\to\Om^k(G/P)$, $L$ is
characterized by this splitting property (i.e.\ $\partial^*\o L=0$ and
$\pi_H\o L=\id$) and the single condition that $\partial^*\o d\o L=0$, where $d$
denotes the exterior derivative.

For our purpose, the nicest description uses the operator $\square^R := \partial^*\o
d + d\o \partial^*$, which defines an endomorphism of $\Om^k(G/P)$ for each $k$, see
Section 3 of \cite{Rel-BGG2}. It turns out that
$\ker(\square^R)\subset\Ga(\ker(\partial^*))\subset\Om^k(G/P)$ for each $k$ and that
$\pi_H$ restricts to a linear isomorphism from this kernel onto $\Ga(\Cal H_k)$ whose
inverse is precisely the splitting operator. Moreover, for each $k$, the inverse of
this isomorphism can also be realized by applying an operator that can be written as
a universal polynomial in $\square^R$ to any section of $\ker(\partial^*)$
representing the given section of $\Cal H_k$. Having $L$, one defines the $k$th BGG
operator $D_k:\Ga(\Cal H_k)\to\Ga(\Cal H_{k+1})$ as $D_k := \pi_H \o d \o L$ and by
construction this is a $G$--equivariant differential operator. From the fact that
$d^2=0$, one easily concludes that these operators form a differential complex,
i.e. satisfy $D_{k+1} \o D_k = 0$, which is called the \emph{BGG complex} (associated
to the trivial representation).

\subsection{The action of the Casimir element}\label{2.4}
The operators $\square^R$ we have met in the BGG construction have immediate
relevance for the study of $G$--equivariant maps defined on $\Om^*(G/P)$. To explain
this, let us recall that the Casimir element of the semisimple Lie algebra $\frak g$
induces a differential operator on the space of sections of any homogeneous vector
bundle $E\to G/H$, where $G$ is any Lie group with Lie algebra $\frak g$ and
$H\subset G$ is any closed subgroup. Such a vector bundle is induced by a (finite
dimensional) representation $\Bbb V$ of $H$, in the sense that $E=G\x_H\Bbb V$. There
is a natural action of $G$ on $\Ga(E)$ by linear maps, which in the equivalent
picture of $H$--equivariant smooth functions $G\to \Bbb V$ is given by $(g\cdot
f)(g')=f(g^{-1}g')$. This shows that there is an induced action of the Lie algebra
$\frak g$, for which $X\in\frak g$ acts on $f$ as differentiation by the
\textit{right} invariant vector field $R_X\in\frak X(G)$ generated by $X$.

This action naturally extends to an action of the universal enveloping algebra
$\Cal U(\frak g)$. In particular, the Casimir element $\Cal C$ acts by a differential
operator $D_{\Cal C}:\Ga(E)\to\Ga(E)$, which is $G$--equivariant by invariance of
$\Cal C$, compare with Section 2.4 of \cite{CurvedCas}. Returning to our setting, we
conclude in particular that any $G$--equivariant linear map
$\Phi:\Om^k(G/P) \to \Om^{\ell}(G/K)$ that satisfies a weak continuity assumption
has to be compatible with the actions of the two operators $D_{\Cal C}$. Since the
precise nature of these continuity conditions is not important, we state the result
under a weak assumption, namely that $\Phi$ is bounded for the natural bornologies on
the spaces in question, see the book \cite{Kriegl-Michor}. This condition on a linear
map ensures that it is smooth in the sense of mapping smooth curves to smooth curves.

\begin{prop}\label{prop2.4}
  Let $\Phi:\Om^k(G/P) \to \Om^{\ell}(G/K)$ be a $G$--equivariant bounded linear
  operator. Then the following are equivalent:
\begin{itemize}
 \item[$(i)$] For all $\al \in \Om^k(G/P)$ the differential form $\Phi(\alpha)$ is harmonic.
 \item[$(ii)$] For all $\al \in \Om^k(G/P)$ we have $\Phi(\Box^R\alpha) = 0$.
 \item[$(iii)$] We have $\Phi \circ \partial^* = 0$ and $\Phi \circ d \circ
   \partial^* = 0$.
\end{itemize}
\end{prop}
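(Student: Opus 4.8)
The plan is to treat the Casimir operator $D_{\Cal C}$ as the central tool and to split the chain of equivalences into an algebraic part $(ii)\Leftrightarrow(iii)$, which only uses the structure of $\square^R$ recalled above, and an analytic part $(i)\Leftrightarrow(ii)$, which uses that $\Phi$ intertwines the two Casimir operators. Throughout I will use the decomposition $\Om^k(G/P)=\ker(\square^R)\oplus\im(\square^R)$ in which $\square^R$ restricts to a linear isomorphism of $\im(\square^R)$ whose inverse is a polynomial $p(\square^R)$; this is exactly the content recalled from \cite{Rel-BGG2}. Boundedness of $\Phi$ enters only to guarantee that $\Phi$ is continuous and intertwines $D_{\Cal C}$, so that conditions verified on the relevant dense subspaces propagate.

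For $(ii)\Leftrightarrow(iii)$ the implication $(iii)\Rightarrow(ii)$ is immediate: since $\square^R=\partial^*\o d+d\o\partial^*$, for $\al\in\Om^k(G/P)$ one has $\Phi(\square^R\al)=(\Phi\o\partial^*)(d\al)+(\Phi\o d\o\partial^*)(\al)=0$. For $(ii)\Rightarrow(iii)$ I would show that both $W_1:=\im(\partial^*\colon\Om^{k+1}\to\Om^k)$ and $W_2:=\im(d\o\partial^*\colon\Om^k\to\Om^k)$ are contained in $\im(\square^R)$; since $(ii)$ says precisely that $\Phi$ annihilates $\im(\square^R)$, this yields $\Phi\o\partial^*=0$ and $\Phi\o d\o\partial^*=0$. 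Both $W_1$ and $W_2$ are $\square^R$-invariant (a one-line check using $\partial^*\o\partial^*=0$ and $d\o d=0$), and both meet $\ker(\square^R)$ trivially: an element of $W_1$ lies in $\im(\partial^*)$, hence in $\ker(\pi_H)$, while $\pi_H$ is injective on $\ker(\square^R)$, so $W_1\cap\ker(\square^R)=0$, and the case of $W_2$ reduces to that of $W_1$ one degree down via $\ker(\square^R)\subseteq\ker(\partial^*)$. Finally, for any $\square^R$-invariant $W$ with $W\cap\ker(\square^R)=0$ one has $W\subseteq\im(\square^R)$: for $w\in W$ the element $p(\square^R)\square^R w$ again lies in $W$ and equals the projection of $w$ onto $\im(\square^R)$, so $w-p(\square^R)\square^R w\in W\cap\ker(\square^R)=0$.

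For $(i)\Leftrightarrow(ii)$ I would first reduce harmonicity to the Casimir. On the Riemannian symmetric space $G/K$ the Hodge Laplacian on the untwisted de Rham complex agrees up to sign with $D_{\Cal C}$ (Kuga's lemma; the relevant shift vanishes because the coefficients are trivial), so $\Phi(\al)$ is harmonic for all $\al$ if and only if $D_{\Cal C}\o\Phi=0$. By the intertwining property this is equivalent to $\Phi\o D_{\Cal C}=0$ on $\Om^k(G/P)$. It then remains to compare the two $G$-equivariant operators $D_{\Cal C}$ and $\square^R$ on $\Om^k(G/P)$. Both vanish exactly on the \emph{harmonic} part and are invertible on its complement: $\ker(\square^R)\cong\Ga(\Cal H_k)$ by construction, while $\ker(D_{\Cal C})$ is the sum of those induced constituents of $\La^k\frak p_+$ on which the central element $\Cal C$ acts by zero. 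By Kostant's description of $H_*(\frak p_+)$, the $\frak g_0$-constituents of $H_k(\frak p_+)$ have highest weights $w\cdot 0$ for suitable Weyl group elements $w$, for which the Casimir scalar is $\langle w\rho,w\rho\rangle-\langle\rho,\rho\rangle=0$, whereas the remaining constituents give nonzero scalars. Since both operators are $G$-equivariant, vanish exactly on $\ker(\square^R)$, and are invertible on the complementary sum of constituents, their kernels and images coincide, and hence $\Phi\o D_{\Cal C}=0$ if and only if $\Phi\o\square^R=0$.

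The main obstacle is this last identification, i.e.\ matching the kernel and image of the analytically defined operator $D_{\Cal C}$ with those of the algebraically defined $\square^R$: the analytic input (a single infinitesimal character on each induced constituent) has to be combined with Kostant's theorem to see that the Casimir-null part is exactly the BGG-harmonic part $\ker(\square^R)$, and it is the vanishing of the Casimir shift on $G/K$ in the trivial-coefficient case that makes the two harmonic conditions line up at eigenvalue zero. A secondary technical point is to arrange the splitting $\Om^k(G/P)=\ker(\square^R)\oplus\im(\square^R)$ to interact correctly with the bornological continuity of $\Phi$, so that annihilation of $\im(\square^R)$ is genuinely equivalent to the operator identities in $(ii)$ and $(iii)$.
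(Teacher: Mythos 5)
Your algebraic half $(ii)\Leftrightarrow(iii)$ is sound in substance, but the analytic half $(i)\Leftrightarrow(ii)$ has a genuine gap. You try to avoid the operator identity $D_{\Cal C}=2\square^R$ on $\Om^*(G/P)$ and instead argue that $D_{\Cal C}$ and $\square^R$ have the same kernel and image because ``$\ker(D_{\Cal C})$ is the sum of those induced constituents of $\La^k\frak p_+$ on which $\Cal C$ acts by zero.'' No such decomposition exists: $\La^k\frak p_+$ is a filtered $P$-module which is in general indecomposable but not completely reducible (the paper emphasizes exactly this point about the de Rham bundles), so $\Om^k(G/P)$ does not split into ``induced constituents.'' The curved Casimir acts by the Kostant scalars only on the \emph{associated graded}; on the filtered bundle it is a genuine differential operator preserving the filtration, and the infinitesimal-character computation $\langle w\rho,w\rho\rangle-\langle\rho,\rho\rangle=0$ therefore controls at best generalized eigenspaces. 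A priori $D_{\Cal C}$ could act nilpotently (and nontrivially) on the zero generalized eigenspace, so neither $\ker(D_{\Cal C})=\ker(\square^R)$ nor $\im(D_{\Cal C})=\im(\square^R)$ follows from Kostant's theorem alone. The paper closes precisely this gap by invoking Corollary 1 of \cite{CurvedCas}, which is the pointwise operator identity $D_{\Cal C}=2\square^R$ on $\Om^*(G/P)$; together with the fact that the Casimir on $\Om^*(G/K)$ is a multiple of $\Delta$ (Matsushima--Murakami), this makes $(i)\Leftrightarrow(ii)$ immediate. That identity is proved in \cite{CurvedCas} by a direct computation with the explicit formula for the curved Casimir, not by an eigenvalue count, and it is the missing ingredient in your argument.

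On $(ii)\Leftrightarrow(iii)$: your route works, but it is heavier than needed and rests on a premise stronger than what you attribute to \cite{Rel-BGG2}. The cited content (also Theorem 5.2 of \cite{Calderbank-Diemer}) is invertibility of $\square^R$ on $\Ga(\im(\partial^*))$, not the global splitting $\Om^k(G/P)=\ker(\square^R)\oplus\im(\square^R)$ with a polynomial inverse of $\square^R$ on $\im(\square^R)$; the latter is true but requires its own (short) derivation from the cited fact together with the commutation of $\square^R$ with $d$ and $\partial^*$. The paper's argument bypasses all of this in two lines: since $\partial^*$ has values in $\Ga(\im(\partial^*))$, one writes $\Phi\o\partial^*=\Phi\o\square^R\o(\square^R)^{-1}\o\partial^*=0$ by $(ii)$, and then $\Phi\o d\o\partial^*=\Phi\o\square^R-\Phi\o\partial^*\o d=0$. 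Your abstract lemma about $\square^R$-invariant subspaces meeting $\ker(\square^R)$ trivially is correct given your premises, but it can be replaced by the one-line observation that $\im(\partial^*)\subset\im(\square^R)$ already follows from $\partial^*\be=\square^R\bigl((\square^R)^{-1}\partial^*\be\bigr)$.
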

\begin{proof}
  As we have note above, $\Phi$ is smooth in the sense that it maps smooth curves to
  smooth curves. Applying this to the action of a one-parameter subgroup in $G$ on a
  fixed form and differentiating, one concludes that $\Phi$ is equivariant for the
  infinitesimal actions of $\frak g$. The definition of the Casimir then directly
  implies that $\Phi$ intertwines the actions of the Casimir operator on the two
  spaces.

  Since the canonical Riemannian metric on $G/K$ is induced by the Killing form on
  $\frak g$, the Casimir operator on $\Om^*(G/K)$ is a multiple of the Laplace
  Beltrami operator $\Delta$, see e.g.\ p.\ 385 in \cite{Matsushima-Murakami}. On the
  other hand, it was shown in Corollary 1 of \cite{CurvedCas} that
  $D_{\Cal C}=2\Box^R$ on $\Om^*(G/P)$. This readily implies the equivalence of $(i)$
  and $(ii)$, and the definition of $\square^R$ shows that $(iii)$ implies
  $(ii)$.

  Thus it remains to show that $(ii)$ implies $(iii)$. To do this, we observe that
  Theorem 5.2 of \cite{Calderbank-Diemer} or Theorem 3.9 of \cite{Rel-BGG2} shows
  that $\square^R$ is invertible on $\Ga(\im(\partial^*))$. Thus we can write
$$
 \Phi \circ \partial^* = \Phi \circ \Box^R \circ (\Box^R)^{-1} \circ \partial^* 
 $$
 and this vanishes by $(ii)$. Knowing that $\Phi\o\partial^*=0$, the definition of
 $\square^R$ readily implies that
 $\Phi \circ d \circ \partial^* = \Phi \circ \Box^R = 0$.
\end{proof}

From this we can prove that an intertwining operator with harmonic values
automatically descends to the BGG complex:

\begin{cor}\label{cor2.4}
  Let $\Phi:\Om^k(G/P)\to\Om^{\ell}(G/K)$ be an bounded linear intertwining operator
  which satisfies the equivalent conditions from Proposition \ref{prop2.4}, and let
  $\Cal H_k\to G/P$ be the $k$th homology bundle.

  Then $\Phi$ descends to a well defined, $G$--equivariant map
  $$
  \underline{\Phi}:\Ga(\Cal H_k) \to \Om^{\ell}(G/K),
  $$ whose image is contained in the space of harmonic differential forms on
  $G/K$. Explicitly, for $\si\in\Ga(\Cal H_k)$, $\underline{\Phi}(\si)$ can be
  computed as $\Phi(\al)$ for any $\al\in\Ga(\ker(\partial^*))$ such that
  $\pi_H\o\al=\si$. Moreover, for $\tau\in\Ga(\Cal H_{k-1})$, we get
  $\underline{\Phi}(D_{k-1}(\tau))=\Phi(d\be)$ for any $\be$ such that
  $\pi_H\o\be=\tau$.
\end{cor}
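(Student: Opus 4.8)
The plan is to verify that $\underline{\Phi}$ is well defined, which amounts to showing that the prescription $\si\mapsto\Phi(\al)$ for $\al\in\Ga(\ker(\partial^*))$ with $\pi_H\o\al=\si$ does not depend on the choice of representative $\al$. First I would recall from Section \ref{2.0a} that $\pi_H:\Ga(\ker(\partial^*))\to\Ga(\Cal H_k)$ is the tensorial projection with kernel $\Ga(\im(\partial^*))$. Hence two representatives $\al,\al'$ of the same section $\si$ differ by an element of $\Ga(\im(\partial^*))$, i.e. $\al-\al'=\partial^*\be$ for some $\be\in\Om^{k+1}(G/P)$. Since $\Phi$ satisfies the equivalent conditions of Proposition \ref{prop2.4}, condition $(iii)$ gives $\Phi\o\partial^*=0$, so $\Phi(\al)-\Phi(\al')=\Phi(\partial^*\be)=0$. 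This shows independence of the representative and hence that $\underline{\Phi}$ is well defined by the stated formula. The $G$--equivariance of $\underline{\Phi}$ is then inherited from that of $\Phi$ together with the fact that $\pi_H$ is a $G$--equivariant bundle map: given $g\in G$, choosing a representative $\al$ for $\si$ one checks that $g\cdot\al$ represents $g\cdot\si$, and $\underline{\Phi}(g\cdot\si)=\Phi(g\cdot\al)=g\cdot\Phi(\al)=g\cdot\underline{\Phi}(\si)$.

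Next I would address the containment of the image in the harmonic forms, but this is immediate: for any $\si$ we have $\underline{\Phi}(\si)=\Phi(\al)$ for a suitable $\al\in\Om^k(G/P)$, and condition $(i)$ of Proposition \ref{prop2.4} guarantees that $\Phi(\al)$ is harmonic. Thus every value of $\underline{\Phi}$ is harmonic without any further argument.

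The remaining assertion is the compatibility with the BGG operator, namely $\underline{\Phi}(D_{k-1}(\tau))=\Phi(d\be)$ for any $\be$ with $\pi_H\o\be=\tau$. Here I would unwind the definition $D_{k-1}=\pi_H\o d\o L$ from Section \ref{2.0a}. For $\tau\in\Ga(\Cal H_{k-1})$, the element $L(\tau)\in\Ga(\ker(\partial^*))$ is the canonical representative of $\tau$, so $D_{k-1}(\tau)=\pi_H(dL(\tau))$. To apply the defining formula for $\underline{\Phi}$, I need a representative in $\Ga(\ker(\partial^*))$ of the section $D_{k-1}(\tau)\in\Ga(\Cal H_k)$; the form $dL(\tau)$ is such a representative, since $\partial^*(dL(\tau))=0$ follows from the splitting property $\partial^*\o d\o L=0$ characterizing $L$. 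Therefore $\underline{\Phi}(D_{k-1}(\tau))=\Phi(dL(\tau))$. It remains to replace $L(\tau)$ by an arbitrary $\be$ with $\pi_H\o\be=\tau$: such $\be$ differs from $L(\tau)$ by a section, and the difference $\be-L(\tau)$ is annihilated by $\pi_H$, hence lies in $\Ga(\im(\partial^*))$. The point I expect to require the most care is that $d$ may move this difference out of $\Ga(\ker(\partial^*))$, so I cannot simply reuse the representative-independence argument; instead I would write $\be=L(\tau)+\partial^*\eta$ and compute $\Phi(d\be)-\Phi(dL(\tau))=\Phi(d\partial^*\eta)$, which vanishes by the second half of condition $(iii)$, namely $\Phi\o d\o\partial^*=0$. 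This establishes $\Phi(d\be)=\Phi(dL(\tau))=\underline{\Phi}(D_{k-1}(\tau))$ and completes the proof. The main subtlety throughout is bookkeeping with the two distinct kernel conditions in $(iii)$: the first handles ambiguity inside $\ker(\partial^*)$, while the second is exactly what controls the extra $d$ appearing in the BGG formula.
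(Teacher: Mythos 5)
Your proposal is correct, and its skeleton coincides with the paper's: well-definedness from $\Phi\o\partial^*=0$, then $\underline{\Phi}(D_{k-1}(\tau))=\Phi(dL(\tau))$ via the splitting operator, then replacement of $L(\tau)$ by an arbitrary representative $\be$. The one step where you genuinely diverge is the last one. You write $\be-L(\tau)=\partial^*\eta$ for a global form $\eta$ and kill $\Phi(d\partial^*\eta)$ by the second half of condition $(iii)$; the paper instead invokes the invertibility of $\square^R$ on $\Ga(\im(\partial^*))$ (Theorem 5.2 of Calderbank--Diemer, already used in the proof of Proposition \ref{prop2.4}), notes that $d^2=0$ forces $\square^R$ to commute with $d$, so $dL(\tau)-d\be$ lies in the image of $\square^R$ and is killed by condition $(ii)$. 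Your route is more elementary, but it silently uses that a section of the subbundle $\im(\partial^*)$ admits a global smooth lift through $\partial^*$ --- true here because $\partial^*$ is induced by a $P$--homomorphism, so it is a constant-rank $G$--equivariant bundle map onto the image subbundle and splits smoothly, but this deserves a sentence (you use the same fact implicitly in your well-definedness step when writing $\al-\al'=\partial^*\be$). The paper's $\square^R$ argument buys exactly this for free: for $\si\in\Ga(\im(\partial^*))$ one has $\nu:=(\square^R)^{-1}\si\in\Ga(\im(\partial^*))\subset\Ga(\ker(\partial^*))$ and hence $\si=\square^R\nu=\partial^*(d\nu)$, manufacturing the lift rather than postulating it. With that one justification added, your proof is complete and arguably cleaner, since it uses only condition $(iii)$ and never needs the commutation of $\square^R$ with $d$.
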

\begin{proof}
From Proposition \ref{prop2.4}, we know that $\Phi\o\partial^*=0$, so the
restriction of $\Phi$ to $\Ga(\ker(\partial^*))$ descends to an operator
$\underline{\Phi}$, which has the first claimed property.

Take $\tau\in\Ga(\Cal H_{k-1})$ and consider $dL(\tau)$, where $L$ is the splitting
operator. This is a section of $\ker(\partial^*)$ and by definition $\pi_H\o
dL(\tau)=D_{k-1}(\tau)$. Thus we see that
$\underline{\Phi}(D_{k-1}(\tau))=\Phi(dL(\tau))$. For $\be\in\Ga(\ker(\partial^*))$
such that $\pi_H\o\be=\tau$, we get $L(\tau)-\be\in\Ga(\im(\partial^*))$ and in the
proof of Proposition \ref{prop2.4}, we have noted that this implies that it lies in
the image of $\square^R$. By definition, $d^2=0$ implies that $\square^R$ commutes
with $d$, so $dL(\tau)-d\be$ lies in the image of $\square^R$, and thus
$\Phi(dL(\tau))=\Phi(d\be)$.
\end{proof}

\section{The case of complex hyperbolic space}\label{3}
We next describe the machinery discussed in Chapter \ref{2} in an elementary and
explicit way in the case that $G=SU(n+1,1)$ and $P$ is the unique parabolic subgroup
of $G$. Thus $G/K$ is complex hyperbolic space and $G/P$ can be identified with the
boundary sphere at infinity, which inherits a natural CR structure.

\subsection{Complex hyperbolic space}\label{2.1}
From this point on, we will restrict our attention to the case that $G=SU(n+1,1)$ and
$P$ is the unique parabolic subgroup of $G$. Fixing a Lorentzian Hermitian form $h$
on $V:=\Bbb C^{n+2}$, and realizing $G$ as $SU(h)$, we can realize $K$ as the
stabilizer of a complex line $\ell_-\subset V$ on which $h$ is negative
definite. Then $K$ preserves the orthocomplement $\ell_-^\perp$ and acts unitarily on
both $\ell_-^\perp$ and on $\ell_-$, which shows that $K\cong S(U(n+1)\times U(1))$.
This readily implies that $\frak g/\frak k$ is isomorphic to the space $L(\ell_-,\ell_-^\perp)$ of linear maps from $\ell_-$ to $\ell_-^\perp$ endowed with
the natural action of $K$. The space $G/K$ can then be identified with the space of
all complex lines in $V$ on which $h$ is negative definite, so this is complex
hyperbolic space of dimension $n+1$.

The complex structure on $L(\ell_-,\ell_-^\perp)$ clearly is $K$--invariant and there
is a positive definite, Hermitian inner product on this space defined by
$(X,Y)\mapsto \tr(X^*\o Y)$, which is $K$--invariant, too. Hence the homogeneous
space $G/K$ carries a $G$--invariant almost complex structure $J$ and a
$G$--invariant Hermitian metric. It is well known that these data actually make $G/K$
into a complete K\"ahler manifold. In particular, we obtain the usual decomposition
of complex valued differential forms into $(p,q)$--types, which we indicate by
$\Om^*(G/K,\Bbb C)=\oplus_{0\leq p,q\leq n+1}\Om^{p,q}(G/K)$.

Using the (real valued) $G$--invariant Riemannian metric $g$, we obtain the
``musical'' operator ${}^\flat: T(G/K) \to T^*(G/K)$ via
$\xi^{\flat}(\eta) = g(\xi, \eta)$ for all $\xi$, $\eta \in T(G/K)$, and we denote
the induced map on the level of sections by the same symbol. The K\"ahler form
$\omega \in \Omega^2(G/K)$ associated to $g$ is the $G$--invariant differential form
characterized by $\omega(\xi, \eta) = g(J\xi, \eta)$ for all vector fields $\xi$,
$\eta$ on $G/K$. In particular, the exterior power $\frac{1}{(n+1)!}\omega^{n+1}$ is
the volume form $vol$ on $G/K$, which by construction is also $G$--invariant. We will
denote the complex extensions of the K\"{a}hler form and the volume form by the same
symbols. Note that these lie in $\Omega^{1,1}(G/K)$ and $\Omega^{n+1,n+1}(G/K)$,
respectively. The Hermitian extension of $g$ can be used to identify the holomorphic
part $T^{1,0}(G/K)$ in the complexified tangent bundle of $G/K$ with $T^*_{0,1}(G/K)$
and then in turn induces a complex bilinear dual pairing
$T^*_{1,0}(G/K)\x T^*_{0,1}(G/K)\to\Bbb C$ that extends to exterior powers in the
usual way. Denoting this by $\langle\ ,\ \rangle$, we get the
Hodge--$*$--operator. This is the $G$-equivariant complex linear map
$\ast: \La^{p,q}T^*(G/K) \to \La^{n+1-q, n+1-p}T^*(G/K)$ characterized by
$$
 \alpha \wedge \ast\beta =  \langle \alpha, \beta \rangle \vol
 $$
 for all $\alpha\in \La^{q,p}T^*(G/K)$, $\be \in \La^{p,q}T^*(G/K)$. We also denote
 the induced operator on forms by $\ast$. Denoting by $d$ the exterior derivative on
 complex valued forms, we then get the codifferential $\delta := - \ast d \ast$ and
 the Laplace-Beltrami operator $\Delta := d\delta + \delta d$.

The decomposition of the complexified tangent bundle into holomorphic and
antiholomorphic vectors induces a splitting of the exterior derivative on $G/K$ into
the sum $d = \partial + \overline{\partial}$, where the first operator maps a form of
type $(p,q)$ to a form of type $(p+1,q)$ and the second is defined by
$\overline{\partial}(\alpha) := \overline{(\partial\overline{\alpha})}$. We say that
a differential form $\alpha \in \Omega^k(G/K, \Bbb C)$ is \emph{holomorphic}
(respectively, \emph{antiholomorphic}) if $\overline{\partial}\alpha = 0$
(respectively $\partial\alpha = 0$). Similarly, we can decompose the codifferential
$\delta = \partial^* + \overline{\partial^*}$ with $\partial^* = - \ast
\overline{\partial} \ast$ and $\overline{\partial}^* = - \ast \partial \ast$.

Finally, the wedge product with the K\"{a}hler form $\omega \in \Omega^{1,1}(G/K)$
defines the \emph{Lefschetz map}
$\Cal L:\Omega^{p,q}(G/K) \to \Omega^{p+1,q+1}(G/K)$, which is $G$-equivariant by
construction. Its adjoint with respect to the Riemannian metric $g$ is the
$G$-equivariant map $\Cal L^*: \Omega^{p,q}(G/K) \to \Omega^{p-1,q-1}(G/K)$, which
can be computed as $\Cal L^* = - \ast \o\Cal L\o \ast$. We say that
$\alpha \in \Omega^{p,q}(G/K)$ is \emph{primitive} iff $\Cal L^*\alpha = 0$ and
\emph{coprimitive} iff $\Cal L\alpha=0$. It is well known (c.f. Theorem 3.11 (b) in
\cite{Wells}) that the degree of a primitive differential form is at most half of the
real dimension of $G/K$, while coprimitive forms exist only above that degree.

\subsection{The CR sphere}\label{2.2}
In the notation introduced above, we define $P\subset G$ to be the stabilizer of a
fixed complex line $\ell_0\subset V$, which is isotropic for $h$. It is well known
that $P$ is a minimal parabolic subgroup of $G$ and the space $G/P$ of complex
isotropic lines in $V$ is diffeomorphic to the sphere $S^{2n+1}$. Such a
diffeomorphism can be obtained by fixing a vector $v_-\in\ell_-$ such that
$h(v_-,v_-)=-1$ and then sending each unit vector $v\in\ell_-^\perp$ to the line
spanned by $v+v_-$, which is visibly isotropic. This also shows how $G/P$ can be
viewed as the boundary at infinity of $G/K$, since mapping $v$ to the complex line
spanned by $v+v_-$ also restricts to a diffeomorphism from the open unit ball in
$\ell_-^\perp$ onto the space of negative lines in $V$, which can be identified with
$G/K$.

Since $\ell_0$ is isotropic, the orthogonal space $\ell_0^\perp$ is a complex
hyperplane in $V$ which contains $\ell_0$, so we can view this as defining a
filtration $\ell_0\subset\ell_0^\perp\subset V$ of $V$ by complex subspaces. Let us
write this filtration as $V^1\subset V^0\subset V^{-1}$ and define $V^j=\{0\}$ for
$j>1$ and $V^j=V$ for $j<-1$. Then we get an induced filtration of the Lie algebra
$\frak g$ of $G$ by defining $\frak g^i=\{X\in\frak g:\forall j:X\cdot V^j\subset
V^{i+j}\}$. By definition, this filtration is compatible with the Lie bracket in the
sense that $[\frak g^i,\frak g^j]\subset\frak g^{i+j}$ for all $i,j$.

Since $P\subset G$ can be characterized as the stabilizer of the filtration of $V$,
we see that $\frak g^0=\frak p$ and that each $\frak g^i$ is a $P$--invariant
subspace in $\frak g$.  From the definition, it is evident that $\frak g^i=\frak g$
for $i\leq -2$ and $\frak g^i=0$ for $i>2$, but for indices between $-2$ and $2$, we
get a proper filtration. In particular, $\frak g^{-1}/\frak p$ is a $P$--invariant
subspace in $\frak g/\frak p$, which is easily seen to be isomorphic to
$L(\ell_0,\ell_0^\perp/\ell_0)\cong\mathbb C^n$. On the other hand, $\frak g/\frak
g^{-1}$ has real dimension $1$ and the Lie bracket on $\frak g$ induces a bilinear,
skew symmetric map $ \frak g^{-1}/\frak p\x \frak g^{-1}/\frak p\to\frak g/\frak
g^{-1}$. This is easily seen to be the imaginary part of a positive definite
Hermitian form, compare with Section 4.2.4 of \cite{book}.

Since $T(G/P)=G\x_P(\frak g/\frak p)$, we conclude that $G\x_P(\frak g^{-1}/\frak p)$
defines a $G$--invariant corank-one subbundle $H\subset T(G/P)$ which carries a
$G$--invariant complex structure. The considerations about the Lie bracket show that
this is a contact structure, which makes $G/P$ into a strictly pseudoconvex partially
integrable almost CR structure of hypersurface type. It is easy to see that this is
indeed the standard (spherical) CR structure on $S^{2n+1}$ coming from the
realization as the unit sphere in $\Bbb C^{n+1}$.

It is well known that the filtration on $\frak g$ is actually induced by a grading,
which is not $P$--invariant, however. This can either be realized by the choice of a
complex isotropic line $\tilde\ell_0\subset V$, which is transverse to
$\ell_0^\perp$. Calling this line $V_{-1}$ and putting
$V_0:=\tilde\ell_0^\perp\cap\ell_0^\perp$ and $V_1=V^1$, we get $V=V_{-1}\oplus
V_0\oplus V_1$ such that $V^i=\oplus_{j\geq i}V_j$ for all $i$. Similarly as above,
this induces a grading $\frak g=\frak g_{-2}\oplus\dots\oplus\frak g_2$ compatible
with the Lie bracket that induces the given filtration in the sense that $\frak
g^i=\oplus_{j\geq i}\frak g_j$.

Alternatively, the grading can be obtained by choosing a Cartan subalgebra $\frak
h\subset\frak g$, which is contained in $\frak g^0$. Having made such a choice,
$\frak g^2:=\frak g_2$ becomes the highest root space, and one defines $\frak g_{-2}$
to be the lowest root space. Then there is a unique element $E\in [\frak g_{-2},\frak
  g_2]\subset\frak h$, which fits into a standard $\frak{sl}_2$--triple, and the
grading of $\frak g$ is the decomposition into eigenspaces for $\ad(E)$. A crucial
fact for our purposes is that the grading of $\frak g$ \textit{is} invariant under
the adjoint action of $M:=K\cap P$.

\subsection{The Rumin complex on the CR--sphere}\label{2.3}
In the special case we consider, the BGG complex as discussed in \ref{2.0a} turns out
to be a complex that is naturally defined on any contact manifold. This was first
constructed in this general setting by M.~Rumin, whence it is called the Rumin
complex. Let us briefly discuss this direct construction and its relation to the
general BGG construction. The construction is completely parallel for real and
complex valued forms, and we do not distinguish between the two cases here. 

Let $H\subset T(G/P)$ be the contact subbundle from above and define
$Q:=T(G/P)/H$. Then the short exact sequence $0\to H\to T(G/P)\to Q\to 0$ of
homogeneous vector bundles dualizes to a short exact sequence
$0\to Q^*\to T^*(G/P)\to H^*\to 0$. Since $Q^*$ has rank $1$, there is an induced
short exact sequence for the exterior powers of order $k=1,\dots,2n$, which has the
form
\begin{equation}\label{eq:cotangent}
  0\to \La^{k-1}H^*\otimes Q^*\to \La^kT^*(G/P)\to\La^kH^*\to 0 . 
\end{equation}
The fact that the Lie bracket induces a tensorial map $H\x H\to Q$ has a counterpart
in the dual picture of the exterior derivative. Take a $k$--form, which is a section
of the subbundle $\La^{k-1}H^*\otimes Q^*$, apply the exterior derivative and project
to a section of $\La^{k+1}H^*$ (i.e.~restrict the form to entries from $H$). The
result of this operation is easily seen to be linear over smooth functions, thus
defining a natural vector bundle map $\La^{k-1}H^*\otimes Q^*\to \La^{k+1}H^*$, which
is easily seen to be injective for $k\leq n$ and surjective for $k\geq n$.

This basically shows that it should be possible to pass to a ``part'' of the de Rham
complex without changing the cohomology. Indeed, it should be possible to ``leave
out'' a complement to the kernel of this bundle map in $\La^{k-1}H^*\otimes Q^*$ as
well as its image without changing the cohomology, since these are just mapped
isomorphically to each other by the exterior derivative. In M.\ Rumin's original
construction \cite{Rumin}, naturality was not an issue and he proceeded by choosing
splittings of the sequences \eqref{eq:cotangent} and then factoring by the irrelevant
parts. With a bit more effort, one can use the exact sequences \eqref{eq:cotangent}
and spectral sequence arguments (which can be made explicit as diagram chases in this
simple case) to obtain a construction of a complex which is manifestly invariant
under contactomorphisms, see \cite{BEGN} or \cite{Cap-Salac}.

To get to the setting of Section \ref{2.0a}, we observe that the nilradical
$\frak p_+$ of $\frak p$ in this simple case is the filtration component $\frak
g^1$.
(The duality between $\frak g/\frak p$ and $\frak g^1$ easily follows from
compatibility of the Killing form $B$ with the filtration and the fact that
$\frak p=\frak g^0$.)  As discussed in Section \ref{2.0a}, we have the homology space
$H_k(\frak p_+,\Bbb K)$ for $\Bbb K=\Bbb R$ or $\Bbb C$ and the corresponding bundles
$\Cal H_k$. We can next describe these homology bundles explicitly.

\begin{prop}\label{prop_homology_bundles}
 In terms of the exact sequences \eqref{eq:cotangent}, the homology bundle $\Cal H_k$
 satisfy the following. For $k\leq n$, $\Cal H_k$ is a subbundle of $\Lambda^k H^*$,
 while for $k \geq n+1$, $\Cal H_k$ is a quotient bundle of $\Lambda^{k-1} H^* \otimes
 Q^*$.
\end{prop}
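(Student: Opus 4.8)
The plan is to reduce everything to pointwise linear algebra on $\La^k\frak p_+$ and to identify the relevant pieces of the Kostant codifferential \eqref{eq:def-codiff} with the symplectic $\frak{sl}_2$-operators attached to the Levi form. Since $\Cal H_k=\ker(\partial^*)/\im(\partial^*)$ is associated to a $P$-module, I would argue entirely at the level of $P$-modules and use the $M$-invariant grading $\frak p_+=\frak g_1\oplus\frak g_2$ only to compute ranks. The starting point is that $\frak g_2=\frak g^2$ is one-dimensional and is a $P$-submodule of $\frak p_+$, so $S_k:=\frak g_2\wedge\La^{k-1}\frak p_+$ is a $P$-submodule of $\La^k\frak p_+$ with $\La^k\frak p_+/S_k\cong\La^k(\frak p_+/\frak g_2)\cong\La^kH^*$. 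Under the identifications $H^*\cong G\x_P\frak g_1$ and $Q^*\cong G\x_P\frak g_2$ these are exactly the sub and the quotient in \eqref{eq:cotangent}: $S_k$ corresponds to $\La^{k-1}H^*\otimes Q^*$ and $\La^k\frak p_+/S_k$ to $\La^kH^*$. The grading additionally provides an $M$-invariant splitting $\La^k\frak p_+=\La^k\frak g_1\oplus S_k$, which I would use only for dimension counts.

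Next I would read off the behaviour of $\partial^*$ from \eqref{eq:def-codiff}. Since $[\frak g_2,\frak p_+]\subset\frak g^3=0$ and $\frak g_2$ is one-dimensional, every bracket occurring in $\partial^*$ lands in $\frak g_2$, and on $S_k$ such a bracket is wedged with an already present $\frak g_2$-factor; hence $\partial^*|_{S_k}=0$, so $S_k\subset\ker(\partial^*)$. On the complementary summand the codifferential is the contraction $c_k\colon\La^k\frak g_1\to\La^{k-2}\frak g_1\otimes\frak g_2\cong S_{k-1}$ induced by the Levi bracket $\La^2\frak g_1\to\frak g_2$. As this bracket is the imaginary part of a positive definite Hermitian form, it is a non-degenerate symplectic form $\om$ on $\frak g_1$, and $c_k$ is precisely the adjoint Lefschetz operator of $\om$ (tensored with the line $\frak g_2$); equivalently, $c_k$ is the adjoint of the wedge map appearing just before \eqref{eq:cotangent}, so its injectivity/surjectivity ranges are dual to those recorded there. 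In particular $\im(\partial^*)=\im(c_{k+1})\subset S_k$ and $\ker(\partial^*)=\ker(c_k)\oplus S_k$.

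With these two observations the homology is computed purely from the ranks of the $c_k$, and the only genuine input is hard Lefschetz at the linear-algebra level: $c_k\cong\La_\om$ is surjective for $k\le n+1$ and injective for $k\ge n+1$ (the $\frak{sl}_2$-decomposition of $\La^\bullet\frak g_1$ puts primitive classes only in degrees $\le n$). For $k\le n$ the map $c_{k+1}$ is surjective, so $\im(\partial^*)=S_k$ as $P$-modules; since $S_k\subset\ker(\partial^*)$, the $P$-projection $\pi\colon\La^k\frak p_+\to\La^k\frak p_+/S_k\cong\La^kH^*$ restricts to an isomorphism $\Cal H_k=\ker(\partial^*)/S_k\to\pi(\ker(\partial^*))$, exhibiting $\Cal H_k$ as a $P$-submodule of $\La^kH^*$, hence a subbundle of $\La^kH^*$. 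For $k\ge n+1$ the map $c_k$ is injective, so $\ker(c_k)=0$ and thus $\ker(\partial^*)=S_k$; therefore $\Cal H_k=S_k/\im(\partial^*)$ is a $P$-quotient of $S_k\cong\La^{k-1}H^*\otimes Q^*$, hence a quotient bundle.

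The one place where care is genuinely needed is the identification of $\partial^*$ on $\La^k\frak g_1$ with the adjoint Lefschetz operator of the Levi form together with the precise ranges of injectivity and surjectivity; everything else is bookkeeping. Two points deserve attention. First, the splitting $\La^k\frak p_+=\La^k\frak g_1\oplus S_k$ is only $M$-invariant, so it may be used to compute ranks but not to describe $\Cal H_k$ as a $P$-module directly; this is exactly why the subbundle statement is phrased through the $P$-quotient $\La^kH^*$ and the quotient statement through the $P$-submodule $S_k$. Second, one should check that the two regimes meet consistently at $k=n+1$, where $c_{n+1}$ is an isomorphism, $\dim\La^{n+1}\frak g_1=\dim\La^{n-1}\frak g_1$, and $\Cal H_{n+1}$ correctly appears as a quotient of $S_{n+1}$.
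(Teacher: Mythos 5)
Your proof is correct and follows essentially the same route as the paper: you show that $\partial^*$ kills the subbundle $\La^{k-1}H^*\otimes Q^*$ and induces on the quotient $\La^k H^*$ a map into $\La^{k-2}H^*\otimes Q^*$ (your symplectic contraction $c_k$), then conclude from its surjectivity for $k\le n+1$ and injectivity for $k\ge n+1$ exactly as the paper does. The only differences are that you supply an $\frak{sl}_2$/hard-Lefschetz proof of the range statement that the paper simply cites as a well-known result, and that you spell out the $P$- versus $M$-equivariance bookkeeping (using the grading only for rank counts) which the paper leaves implicit.
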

\begin{proof}
  From the definition of $\partial^*$ from \eqref{eq:def-codiff} and the simple structure
  of $\frak p_+=\frak g_1\oplus\frak g_2$ in our case, we see that the bundle map
  $\partial^*:\La^kT^*(G/P)\to\La^{k-1}T^*(G/P)$ vanishes on the subbundle
  $\La^{k-1}H^*\otimes Q^*$ and has values in $\La^{k-2}H^*\otimes Q^*$. This it
  actually defines a bundle map
  $\underline{\partial^*}:\La^k H^* \to \La^{k-2} H^* \otimes Q^*$. It is a well
  known result that this map is surjective for all $k \le n+1$ and injective for
  $k \ge n+1$.

Thus we see that for $k \le n$, $\Cal H_k$ simply is
$\ker(\underline{\partial}^*)\subset \La^k H^*$. For $k \ge n+1$, the kernel of
$\partial^*$ equals $\La^{k-1} H^* \otimes Q^*$, so in these cases $\Cal H_k$ is the
quotient of $\La^{k-1} H^* \otimes Q^*$ by $\im(\underline{\partial^*})$, which
completes the proof.
\end{proof}

As discussed in Section \ref{2.0a}, we then obtain the BGG operators
$D_k:\Ga(\Cal H_k)\to\Ga(\Cal H_{k+1})$, which are $G$-equivariant differential
operators that form the BGG complex. It was shown in \cite{BEGN} that this complex
coincides with the Rumin complex on the CR sphere.

\subsection{Poisson transforms and natural operations}\label{3.2}
We are now ready to invoke the machinery discussed in Section \ref{2.0} in our
special case. As discussed there, $G$-invariant forms on $G/M$ or, equivalently,
$M$-invariant elements in $\La^*(\frak g/\frak m)^*$ give rise to Poisson
transforms mapping differential forms on $G/P$ to differential forms on $G/K$. We
next describe the composition of such transforms with natural operations in terms of
operations on the Poisson kernels.

First, observe that the decomposition of forms on $G/M$ according to bidegree induces
a splitting of the exterior derivative as $d = d_K + d_P$, where $d_K$ and $d_P$ map
forms of bidegree $(i,j)$ to forms of bidegree $(i+1,j)$ and $(i,j+1)$,
respectively. We call these operators, which are $G$--equivariant by construction,
the \emph{$K$-derivative} and the \emph{$P$-derivative}. Since $d^2=0$, we
immediately conclude that $d_K^2 = 0$, $d_P^2 = 0$ and $d_Kd_P = -d_Pd_K$. In view of
the complex structure on $G/K$, we get an obvious splitting
$d_K=\partial_K\oplus\overline{\partial}_K$.

Next, the Hodge--star operator on $G/K$ is induced by $K$--equivariant isomorphisms
$\La^k(\frak g/\frak k)^*\to\La^{2n+2-k}(\frak g/\frak k)^*$. Of course, these
isomorphisms are $M$--equi\-vari\-ant and tensorizing with appropriate identity maps
and passing to the induced tensorial operator, we obtain tensorial maps
$$
\ast_K:\Omega^{(i,j)}(G/M) \to \Omega^{(2n+2-i,j)}(G/M)
$$
for all $i$ and $j$, which we call the \emph{$K$--Hodge--star}. Observe that since
$G/K$ has even dimension, the inverse of $\ast_K$ is $-\ast_K$. Thus we define the
\emph{$K$--codifferential} and the \emph{$K$--Laplacian} on $\Om^*(G/M)$ by $\delta_K
:= - \ast_Kd_K\ast_K$ and $\Delta_K := d_K\delta_K + \delta_Kd_K$, respectively.

Let $\omega_K\in\Om^{(2,0)}(G/M)$ be the pullback of the K\"{a}hler form of $G/K$,
which of course is $G$--invariant. Using this, we define the \emph{$K$--Lefschetz
  map}
$$
\Cal L_K:\Om^{(i,j)}(G/M) \to \Omega^{(i+2,j)}(G/M)
$$
as the wedge product with $\omega_K$ and we consider its adjoint $\Cal L_K^*:= -
\ast_K\o \Cal L_K \o\ast_K$, which maps forms of bidegree $(i,j)$ to forms of
bidegree $(i-2,j)$.

Finally, the Kostant codifferential
$\partial^*:\La^j(\frak g/\frak p)^*\to\La^{j-1}(\frak g/\frak p)^*$ is
$P$--equivariant, and thus as above, this gives rise to a tensorial operation
$$
\partial^*_P:\Omega^{(i,j)}(G/M) \to \Omega^{(i,j-1)}(G/M),
$$
which we call the \emph{$P$-codifferential}. Here we fix the convention that for
$\al\in\Om^{(i,0)}(G/M)$ and $\be\in\Om^{(0,j)}(G/M)$, we put
$\partial^*_P(\al\wedge\be)=(-1)^i\al\wedge(\partial^*_P\be)$, which by linearity
defines the action on all of $\Om^{(i,j)}(G/M)$.

To analyze the relation of the Kostant codifferential with Poisson transforms, we
have to establish its compatibility with the wedge product on $\La^k\frak p_+$.
\begin{lemma}\label{lem2.3}
For each $k=1,\dots,2n$, $\al\in\La^k\frak p_+$ and
$\be\in\La^{2n+2-k}\frak p_+$ we get
$(\partial^*\al)\wedge\be=(-1)^k\al\wedge\partial^*\be$.
\end{lemma}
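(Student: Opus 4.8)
The identity asserts that $\partial^*$ is self-adjoint, up to the sign $(-1)^k$, for the nondegenerate pairing $\La^k\frak p_+\x\La^{2n+1-k}\frak p_+\to\La^{2n+1}\frak p_+\cong\Bbb K$ given by the wedge product into the top exterior power. Note first that $\dim\frak p_+=2n+1$, so both $(\partial^*\al)\wedge\be$ and $\al\wedge(\partial^*\be)$ indeed land in this one-dimensional top power. My plan is to exploit the very simple structure of $\frak p_+=\frak g_1\oplus\frak g_2$ in the case at hand: here $\frak g_2$ is one-dimensional and central, while the bracket restricts to a map $\frak g_1\x\frak g_1\to\frak g_2$. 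Fixing a generator $E$ of $\frak g_2$, this bracket has the form $(X,Y)\mapsto\sigma(X,Y)E$ for a symplectic form $\sigma\in\La^2\frak g_1^*$, so that $\frak p_+$ is the Heisenberg algebra. In particular $\La^{2n+1}\frak p_+=E\wedge\La^{2n}\frak g_1$ and $E\wedge E=0$, facts I will use repeatedly.

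The first step is to read off $\partial^*$ from \eqref{eq:def-codiff}. Since the only nonzero brackets produce a factor $E$, two observations are immediate. First, $\partial^*$ annihilates every element of $E\wedge\La^\bullet\frak g_1$: in each term of \eqref{eq:def-codiff} either a bracket $[E,\cdot]=0$ occurs, or the bracket produces a second factor $E$, which is killed by $E\wedge E=0$. Second, on $\La^\bullet\frak g_1$ the operator $\partial^*$ equals $E\wedge\iota_\sigma(-)$ up to a fixed universal sign, where $\iota_\sigma:\La^j\frak g_1\to\La^{j-2}\frak g_1$ denotes contraction by the two-form $\sigma$; this is just a rewriting of \eqref{eq:def-codiff} when all entries lie in $\frak g_1$.

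Using these two facts, I would decompose $\al=\al_0+E\wedge\al_1$ and $\be=\be_0+E\wedge\be_1$ with all components in $\La^\bullet\frak g_1$, compute $\partial^*\al=E\wedge\iota_\sigma\al_0$ and $\partial^*\be=E\wedge\iota_\sigma\be_0$, and expand both sides of the asserted identity. Every term containing two factors $E$ drops out, and moving the single surviving $E$ to the front past $\al_0$ produces exactly the sign $(-1)^{|\al_0|}=(-1)^k$, which is precisely the sign appearing in the statement; the universal sign of the previous paragraph occurs on both sides and cancels. After this bookkeeping, the claim $(\partial^*\al)\wedge\be=(-1)^k\al\wedge(\partial^*\be)$ reduces to the single clean statement that $\iota_\sigma$ is self-adjoint for the top pairing on $\La^\bullet\frak g_1$, i.e.\ $(\iota_\sigma\al_0)\wedge\be_0=\al_0\wedge(\iota_\sigma\be_0)$ for $\al_0\in\La^k\frak g_1$ and $\be_0\in\La^{2n+2-k}\frak g_1$, both sides lying in $\La^{2n}\frak g_1$.

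It remains to prove this self-adjointness, which I would do by writing $\sigma=\sum_a\xi_a\wedge\zeta_a$ with $\xi_a,\zeta_a\in\frak g_1^*$, so that $\iota_\sigma=\sum_a\iota_{\zeta_a}\iota_{\xi_a}$, and invoking the elementary adjunction for a single contraction. The latter follows by applying the graded Leibniz rule $\iota_\xi(\mu\wedge\nu)=(\iota_\xi\mu)\wedge\nu+(-1)^{|\mu|}\mu\wedge(\iota_\xi\nu)$ to a product landing in $\La^{2n+1}\frak g_1=0$, which forces $(\iota_\xi\mu)\wedge\nu=(-1)^{|\mu|+1}\mu\wedge(\iota_\xi\nu)$. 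Applying this once for $\iota_{\zeta_a}$ and once for $\iota_{\xi_a}$, the two signs combine with the anticommutation $\iota_{\xi_a}\iota_{\zeta_a}=-\iota_{\zeta_a}\iota_{\xi_a}$ to yield exactly $(\iota_\sigma\al_0)\wedge\be_0=\al_0\wedge(\iota_\sigma\be_0)$. The main, and essentially only, obstacle throughout is disciplined sign bookkeeping: there is no conceptual difficulty, the content being simply that the homology differential of the unimodular (indeed nilpotent) algebra $\frak p_+$ is Poincar\'e self-dual.
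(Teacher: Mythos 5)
Your proof is correct and is essentially the paper's own argument in intrinsic clothing: the paper's formula \eqref{eq:tech-codiff} likewise writes $\partial^*$ as wedging with a generator of $\frak g_2$ after a double contraction built from the bracket (there expressed via Killing-dual bases of $\frak g_{\pm1}$, in your version as contraction by the symplectic form $\sigma$ on $\frak g_1$), and both proofs conclude by shuttling the two insertion operators across the top-degree wedge product, picking up the signs $(-1)^k$, $(-1)^{k+1}$ and one anticommutation exactly as you describe. The only genuine differences are presentational --- you decompose $\al$ and $\be$ into $E$-components and isolate the self-adjointness of $\iota_\sigma$ as a clean separate step, whereas the paper records the dual-basis formula \eqref{eq:tech-codiff}, which has the added payoff of being reused in the proof of Proposition \ref{prop4.1}.
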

\begin{proof}
From the grading property it readily follows that the Killing form $B$ vanishes on
$\frak g_i\x\frak g_j$ unless $i+j=0$, so $B$ has to restrict to a non--degenerate
pairing on $\frak g_i\x\frak g_{-i}$ for each $i=0,1,2$. Choose elements
$\nu_{\pm}\in\frak g_{\pm2}$ such that $B(\nu_+,\nu_-)=1$ as well as bases
$\{\xi_s\}$ for $\frak g_{-1}$ and $\{\eta_s\}$ for $\frak g_1$ which are dual with
respect to $B$. Then we claim that
\begin{equation}\label{eq:tech-codiff}
\partial^* \al = \tfrac{1}{2} \textstyle\sum_s\nu_+\wedge 
(i_{[\eta_s,\nu_-]}i_{\xi_s}\al).
\end{equation}
Here we view elements of $\La^*\frak p_+$ as multilinear maps on $\frak g/\frak p$
and for an element $X \in \frak g_{-1}$ we denote by $i_X$ the usual insertion
operator for the element $X+\frak p\in\frak g/\frak p$.

By linearity of $\partial^*$, it suffices to prove \eqref{eq:tech-codiff} for
decomposable elements $\al\in\La^k\frak p_+$, so we can take
$\al=Z_1\wedge\dots\wedge Z_k$. Now $\frak g_2$ is one--dimensional and the bracket
on $\frak p_+=\frak g_1\oplus\frak g_2$ has values in $\frak g_2$ and vanishes if one
of its entries lies in $\frak g_2$. Using the definition of $\partial^*$, we conclude
that $\partial^*\al=0$ if one of the $Z_i$ lies in $\frak g_2$. But since both
$[\eta_s,\nu_-]$ and $\xi_s$ are in $\frak g_{-1}$, the same is true for the right
hand side of \eqref{eq:tech-codiff}. Thus we may restrict to the case that all $Z_i$
are in $\frak g_1$.

Now we can write $[Z_i,Z_j]=B([Z_i,Z_j],\nu_-)\nu_+$ and invariance of $B$ shows that
the numerical factor can be written as $B(Z_i,[Z_j,\nu_-])$. Now using the invariance
of $B$ once more, we can in turn express $[Z_j,\nu_-]\in\frak g_{-1}$ as
$\sum_s-B(Z_j,[\eta_s,\nu_-])\xi_s$. Inserting this, we obtain
$[Z_i,Z_j]=-\sum_s B(Z_i,\xi_s)B(Z_j,[\eta_s,\nu_-])\nu_+$, and observe that $B(Z,X)$
is the value of the linear map defined by $Z$ on $X\in\frak g_{-1}$. Using this, the
claimed formula follows from the definition of $\partial^*$ in \eqref{eq:def-codiff}
by a simple direct computation, thus proving the claim.

To complete the proof, we expand $(\partial^*\al)\wedge\be$ according to
\eqref{eq:tech-codiff} and move the wedge product with $\nu_+$ to obtain summands which are up to a constant multiple of
the form $(-1)^k(i_{[\eta_s,\nu_-]}i_{\xi_s}\al)\wedge\nu_+\wedge\be$. Since
$\left(i_{\xi_s}\al\right)\wedge\nu_+\wedge\be=0$ and $B(\nu_+,[\eta_s,\nu_-])=0$, we see that
this equals $\left(i_{\xi_s}\al\right)\wedge\nu_+\wedge i_{[\eta_s,\nu_-]}\be$. The same argument
shows that the other insertion operator can be moved to $\be$ at the expense of a
sign $(-1)^{k+1}$ and exchanging the two insertion operators causes another
sign--change. Using \eqref{eq:tech-codiff} again, this completes the argument.
\end{proof}

Using this, we are ready to formulate the main compatibility result. 

\begin{prop}\label{prop3.2}
  Let $\Phi:\Om^k(G/P) \to \Om^\ell(G/K)$ be a Poisson transform with corresponding
  Poisson kernel $\phi\in\Om^{(\ell, 2n+1-k)}(G/M)$.
\begin{itemize}
\item[(i)] The compositions $d \circ \Phi$, $\ast \circ \Phi$, $\delta \circ \Phi$
  and $\Delta \circ \Phi$ are again Poisson transforms with associated Poisson
  kernels $d_K\phi$, $\ast_K\phi$, $\delta_K\phi$ and $\Delta_K\phi$, respectively.
\item[(ii)] The compositions $\Cal L \circ \Phi$ and $\Cal L^* \circ \Phi$ are again
  Poisson transforms with associated Poisson kernels $\Cal L_K\phi$ and
  $\Cal L_K^*\phi$, respectively.
\item[(iii)] The compositions $\Phi \circ d$ and $\Phi \circ \partial^*$ are Poisson
  transforms with corresponding Poisson kernels $(-1)^{\ell-k} d_P\phi$ and
  $(-1)^{\ell-k+1} \partial^*_P\phi$, respectively.
\end{itemize}
\end{prop}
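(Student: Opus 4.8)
The plan is to reduce all eight statements to two structural properties of fibre integration over the compact fibre $G/P$ of the projection $\pi_K\colon G/M\to G/K$, together with the algebraic Lemma \ref{lem2.3}. The first property is that any tensorial operation acting only on the $K$--legs of a form on $G/M$ commutes with $\fint_{G/P}$; this is a version of the projection formula $\fint_{G/P}(\pi_K^*\eta\wedge\mu)=\eta\wedge\fint_{G/P}\mu$, and it applies in particular to $\ast_K$ and to $\Cal L_K=\om_K\wedge(\cdot)$, giving $\ast\o\fint_{G/P}=\fint_{G/P}\o\ast_K$ and $\Cal L\o\fint_{G/P}=\fint_{G/P}\o\Cal L_K$. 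The second property is that, the fibre being closed, $\fint_{G/P}$ commutes with the exterior derivative, $d\o\fint_{G/P}=\fint_{G/P}\o d$. I will use repeatedly that $\pi_P^*\al$ has bidegree $(0,*)$, so $d_K\pi_P^*\al=0$ and $d_P\pi_P^*\al=\pi_P^*(d\al)$, and that any form of bidegree $(\ell,N)$ has top $P$--degree and is therefore annihilated by $d_P$, while $\fint_{G/P}$ vanishes on forms of $P$--degree strictly below $N$.

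For part (i) the Hodge star is immediate: as $\ast_K$ touches only the $K$--legs and $\pi_P^*\al$ carries none, $\ast_K(\phi\wedge\pi_P^*\al)=(\ast_K\phi)\wedge\pi_P^*\al$, so $\ast\o\Phi$ is the Poisson transform with kernel $\ast_K\phi$. For $d$ I compute $d\,\Phi(\al)=\fint_{G/P}d(\phi\wedge\pi_P^*\al)$; the $d_P$--part vanishes by top $P$--degree and $d_K\pi_P^*\al=0$, whence $d(\phi\wedge\pi_P^*\al)=(d_K\phi)\wedge\pi_P^*\al$ and the kernel is $d_K\phi$, the absence of a sign being exactly the closedness of the fibre. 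The Lefschetz map in (ii) is the projection formula applied to $\eta=\om$, whose pullback $\om_K$ has even bidegree $(2,0)$. All remaining operators are algebraic combinations of these three: from $\delta=-\ast d\ast$, $\Delta=d\delta+\delta d$ and $\Cal L^*=-\ast\Cal L\ast$ together with the matching identities $\delta_K=-\ast_Kd_K\ast_K$, $\Delta_K=d_K\delta_K+\delta_Kd_K$, $\Cal L_K^*=-\ast_K\Cal L_K\ast_K$, iterating the cases already settled produces the kernels $\delta_K\phi$, $\Delta_K\phi$ and $\Cal L_K^*\phi$.

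Part (iii) is where the real work lies, because one now precomposes and must shift an operator from $\pi_P^*\al$ onto $\phi$. For $\Phi\o d$, starting from $\Phi(d\al)=\fint_{G/P}(\phi\wedge d_P\pi_P^*\al)$, I expand the total derivative $d_P(\phi\wedge\pi_P^*\al)$ by the graded Leibniz rule. Its fibre integral vanishes, since $\fint_{G/P}\o d_P=0$ on forms of bidegree $(\ell,N-1)$ --- a direct consequence of $d\o\fint_{G/P}=\fint_{G/P}\o d$ and the vanishing of $\fint_{G/P}$ below top $P$--degree. Solving the Leibniz identity for $\phi\wedge d_P\pi_P^*\al$ then exhibits the kernel as $(-1)^{|\phi|+1}d_P\phi$ with $|\phi|=\ell+N-k$; since $N=2n+1$ is odd one has $(-1)^{|\phi|+1}=(-1)^{\ell-k}$, as claimed.

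The composition $\Phi\o\partial^*$ is the main obstacle, and it is here that Lemma \ref{lem2.3} is essential: unlike $d$, the operator $\partial^*_P$ is tensorial rather than a derivative, so no Stokes argument is available and the needed ``integration by parts'' must be supplied algebraically. The key observation is that $\fint_{G/P}$ only registers the component in the one--dimensional top power $\La^{2n+1}\frak p_+$, and that both $\phi\wedge\partial^*_P\pi_P^*\al$ and $(\partial^*_P\phi)\wedge\pi_P^*\al$ already have bidegree $(\ell,N)$; I may therefore apply Lemma \ref{lem2.3} pointwise to the $\frak p_+$--parts to transfer $\partial^*$ from $\al$ onto $\phi$ under the integral. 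What remains is pure sign bookkeeping: the algebraic sign $(-1)^{|\al|}$ of the lemma must be combined with the Koszul signs incurred in commuting a degree $-1$ operator past the $\ell$ $K$--legs of $\phi$ (precisely the convention $(-1)^i$ fixed in the definition of $\partial^*_P$) and with the sign from reordering the two $\frak p_+$--factors into top degree. I expect the careful tracking of these signs to the single value $(-1)^{\ell-k+1}$ to be the most delicate point of the proof; carrying it out identifies the kernel as $(-1)^{\ell-k+1}\partial^*_P\phi$.
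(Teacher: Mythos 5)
Your proposal is correct and follows essentially the same route as the paper's proof: commutation of $d$ with the fiber integral plus bidegree counting for (i), the projection formula for the Lefschetz map in (ii), the Leibniz rule with vanishing of the fiber integral of the $d_P$-exact term for $\Phi\circ d$, and the pointwise application of Lemma \ref{lem2.3} together with the sign convention in the definition of $\partial^*_P$ for $\Phi\circ\partial^*$. All your computed signs agree with the paper (including $(-1)^{|\phi|+1}=(-1)^{\ell-k}$), and the final sign you defer is exactly the paper's one-line combination $(-1)^{2n+1-k}\cdot(-1)^{\ell}=(-1)^{\ell-k+1}$ of the lemma's sign with the $(-1)^{i}$ convention.
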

\begin{proof}
  (i) Recall from Proposition X in chapter VII of \cite{GHV} that the exterior
  derivative commutes with the fiber integral. Thus, we obtain for all
  $\al \in \Om^k(G/P)$ that
$$
d\Phi(\alpha) = \fint_{G/P} d\left(\phi \wedge \pi_P^*\alpha\right) = \fint_{G/P}
(d_K\phi) \wedge \pi_P^*\alpha,
$$
where we used that $\ph\wedge d\pi_P^*\alpha$ and $(d_P\phi)\wedge \pi_P^*\alpha$
evidently vanish. Next, by tensoriality of the Hodge star it suffices to show the
relation for the composition $\ast \circ \Phi$ at any point, where it can be deduced
from the local description of the fiber integral. Combining those two, we obtain (i)
and together with Proposition IX in chapter VII of \cite{GHV}, (ii) follows readily.

For $\al \in \Om^{k-1}(G/P)$, we get
$d(\phi\wedge\pi_P^*\al)=d\ph\wedge\pi_P^*\al+(-1)^{\ell-k+1}\ph\wedge\pi_P^*d\al$. Applying
$\fint_{G/P}$ the left hand side vanishes, since $d$ commutes with $\fint_{G/P}$. In
the first summand in the right hand side, only $d_P\ph$ leads to a form of the right
bidegree, and we get $0=\fint_{G/P}d_P\ph\wedge\pi_P^*\al+(-1)^{\ell-k+1}\Ph(d\al)$,
which gives the first part of (iii).

For the second part of (iii), we observe that Lemma \ref{lem2.3} and the
definition of $\partial^*_P$ imply that for $\al\in\Om^{k+1}(G/P)$ we get
$$
\phi\wedge\pi_P^*\partial^*\al=\ph\wedge\partial^*_P\pi_P^*\al=
(-1)^{2n+1-k+\ell}(\partial^*_P\ph)\wedge\pi_P^*\al. 
$$ 
Applying $\fint_{G/P}$, the left hand side gives $\Phi(\partial^*\al)$ and the result
follows. 
\end{proof}

Thus we see how to construct Poisson transforms that satisfy the equivalent
conditions of Proposition \ref{prop2.4} and hence descend to the Rumin
complex as discussed in Corollary \ref{cor2.4}: We have to construct
Poisson kernels $\phi$ that satisfy $\partial^*_P\ph=0$ and
$\partial^*_Pd_P\ph=0$. To carry this out explicitly, it will be convenient to pass
to complex valued differential forms. This allows us to decompose forms on $G/K$ into
$(p,q)$--types. Similarly, spitting forms on $G/M$ into bidegrees, the first degree
can be split further into $(p,q)$--types. It follows immediately from the definitions
that $G$--invariance of $\ph$ is equivalent to $G$--invariance of all
$(p,q)$--components of $\ph$ and similarly for vanishing of $\partial^*_P\ph$ and
$\partial^*_Pd_P\ph$.

\subsection{The structure of $M$ and $\frak g/\frak m$.}\label{3.3}
To make things concrete in our case, let us first describe the groups and Lie
algebras we need explicitly.  We realize $G = SU(n+1,1)$ as the group of all complex
matrices $g \in GL(n+2,\Bbb C)$ which satisfy $g^*Sg = S$ and $\det(g) = 1$, where $S$
is the symmetric matrix
\begin{align*}
 S = \begin{pmatrix} 0 & 0 & 1 \\ 0 & \id_n & 0 \\ 1 & 0 & 0 \end{pmatrix}.
\end{align*}
The maximal compact subgroup $K \cong U(n+1)$ of $G$ is given by the fixed points of
the global Cartan involution $g \mapsto \left(g^{-1}\right)^*$. Writing elements in
$G$ as block matrices with the same block sizes as $S$, the minimal parabolic
subgroup $P\subset G$ is given by
\begin{align*}
 P = \left\lbrace \begin{pmatrix} a & -aY^*B & \frac{a}{2}\left(b - |Y|^2\right) \\ 0
   & B & Y \\ 0 & 0 & \overline{a}^{-1}\end{pmatrix} : \begin{array}{c} a \in
   \Bbb C^*, b \in \Bbb C, Y \in \Bbb C^n, B \in U(n), \\ \det(B) =
   \overline{a}a^{-1}\end{array} \right\rbrace.
\end{align*}
Let $G = KAN$ be the Iwasawa decomposition of $G$ with respect to the choices of $K$
and $P$. Then the group $A$ is represented by all block--diagonal matrices in $P$
with $B = \id_n$, $Y=0$, $b=0$ and $a \in \Bbb R^*$,
whereas $N$ corresponds to the elements with $a = 1$ and $B = \id_n$. Finally, the
intersection $M = K \cap P$ consists of all matrices in $P$ with $|a| = 1$, $Y
= 0$ and $b = 0$ and is therefore isomorphic to $S(U(n) \times U(1))$. In this way,
$P = MAN$ is the Langlands decomposition and $G_0 := MA$ is the Levi subgroup of $P$.

Turning to the infinitesimal picture, the Lie algebra $\frak g = \mathfrak{su}(n+1,1)$ of
$G$ is
\begin{align*}
 \frak g = \left\lbrace \begin{pmatrix} b & -Y^* & y \\ X & B & Y \\ x & -X^* &
   -\overline{b}\end{pmatrix} : \begin{array}{c} X, Y \in \Bbb C^n, b \in \Bbb C, x,y
   \in i\Bbb R, B \in \mathfrak{u}(n) \\  b + \tr(B) - \overline{b} = 0  \end{array}
 \right\rbrace. 
\end{align*}
The block form of $\frak g$ defines the $|2|$--grading $\frak g=\frak g_{-2} \oplus
\frak g_{-1} \oplus \frak g_0 \oplus \frak g_1 \oplus \frak g_2$, and the Lie algebra
$\frak p$ of $P$ is $\oplus_{i\geq 0}\frak g_i$. Writing $\xi \in \frak g$ as $\xi =
(x, X, (B, b), Y, y)$ according to this decomposition, the Lie algebra $\frak k$ of
$K$ is given by all elements $(x, X, (B, b), X, x)$ with $b\in i\Bbb R$. In
particular, the Lie algebra $\frak m$ of $M$ consists of all elements of the form
$(0, 0, (B, b), 0, 0)$ with $b \in i\Bbb R$, whereas the Lie algebra $\frak a$ of $A$ is generated by the
grading element $E := (0,0,(0,1),0,0) \in \frak g_0$.

Finally, the Killing form on $\frak g$ is a multiple of the trace form and therefore
determined by its value on the grading element. For nice conventions, it is better to
define $B$ to be $\frac1{2(n+2)}$ times the Killing form, which leads to the
following non-degenerate pairings:
\begin{align*}
 \frak a \times \frak a &\to \Bbb R, & B(E, E) &= 2,\\ \frak m \times \frak m
 &\to \Bbb R, & B((b_1, B_1), (b_2, B_2)) &= (2b_1b_2 + \tr(B_1B_2)), \\ \frak
 g_{-1} \times \frak g_1 &\to \Bbb R, & B(X, Y) &= -2 \langle X, Y
 \rangle,\\ \frak g_{-2} \times \frak g_2 &\to \Bbb R, & B(x,y) &= xy,
\end{align*}
where $\langle \ , \ \rangle$ denotes the standard Hermitian inner product on $\Bbb
C^n$.

Since $M\subset G_0$, we get $\frak m\subset\frak g_0$ and the $|2|$--grading on
$\frak g$ is invariant under the adjoint action of $M$. In particular, as a
representation of $M$, the quotient $\frak g/\frak m$ splits as $\oplus_{i=-2}^2(\frak
g/\frak m)_i$. For $i\neq 0$ we get $(\frak g/\frak m)_i=\frak g_i$, while $(\frak
g/\frak m)_0$ has real dimension one and is spanned by $E+\frak m$. Explicitly,
writing elements of $\frak g/\frak m$ as  $\xi = (x, X, a, Y, y)$ with $x,y\in i\Bbb R$,
$a\in\Bbb R$ and $X,Y\in\Bbb C^n$ according to this identification and viewing $M$ as
$S(U(n) \times U(1))$, the action of $M$ on $\frak g/\frak m$ is given by
$$
(B, b) \cdot \xi = (x, b^{-1}BX, a, b^{-1}BY, y).
$$
We know that $\frak g/\frak m$ is the sum of the horizontal subspace $\frak p/\frak
m$ and vertical subspace $\frak k/\frak m$ and it is easy to identify these subspaces.
The space $\frak p/\frak m$ consists of all elements of the form $(0, 0, a, X, x)$,
while $\frak k/\frak m$ consists of all elements of the form $(x, X, 0, X, x)$. As
stated above, we will work with complex forms to use the decompositions into
$(p,q)$--types, so it will be helpful to deal with the complexification $(\frak
g/\frak m)_{\Bbb C}$. Since $\frak p/\frak m$ is a complex subspace in $\frak g/\frak
  m$, its complexification splits as $(\frak p/\frak m)^{1,0} \oplus (\frak p/\frak
  m)^{0,1}$. Explicitly, the complex structure on $\frak p/\frak m$ maps $(0, 0, a, X, x)$ to $(0,0, \frac{-ix}{2}, JX, 2ia)$. Similarly, the complexification of the CR-subspace $\Bbb H\subset \frak
  k/\frak m$ splits as $\Bbb H^{1,0} \oplus \Bbb H^{0,1}$.

For further computations we fix some notation for elements of
$(\frak g/\frak m)_{\Bbb C}$. First, we put
$$
 Z := \frac{1}{2}(0,0,1,0,2i) \in (\frak p/\frak m)^{1,0} \qquad I:= (i,0,0,0,i) \in (\frak
 k/\frak m)_{\Bbb C}.
$$
On the other hand, for $X\in\Bbb C^n$, we define 
$$
F_X^{1,0} := (0,0, 0, X^{1,0}, 0) \in (\frak p/\frak m)^{1,0} \qquad G_X^{1,0} := (0, X^{1,0}, 0,
X^{1,0}, 0) \in \Bbb H^{1,0},  
$$ and similarly we define $F_X^{0,1}$ and $G_X^{0,1}$, using $X^{0,1}$ instead of
$X^{1,0}$. Of course, we also have $\overline{Z}\in (\frak p/\frak
m)^{0,1}$. Finally, one immediately verifies that the pullback $g_K$ of the
$K$--invariant Hermitian inner product on $\frak g/\frak k$ with the isomorphism to
$\frak p/\frak m$ corresponds to the standard pairing, i.e.~, the non--trivial
pairings are given by
$$
 g_K(F_X^{1,0}, F_Y^{0,1}) = \tfrac12\langle X, Y\rangle \qquad g_K(Z, \overline{Z}) = 1.
$$

 \subsection{Basic invariant forms on $G/M$}\label{3.4}
 It is now easy to construct several invariant one--forms on $G/M$ via $M$--invariant
 elements in $(\frak g/\frak m)_{\Bbb C}^*$ and analyze their exterior
 derivatives. First, we define $I^*\in (\frak g/\frak m)_{\Bbb C}^*$ by requiring
 that $I^*$ vanishes on $(\frak p/\frak m)_{\Bbb C}$ and on $\Bbb H_{\Bbb C}$ and
 that $I^*(I)=1$.

 Recall from Section \ref{3.1} that we split forms on $G/M$ according to
 bidegree. Now we further decompose them according to $(p,q)$--types.  This is no
 problem with respect to the first degree, since this corresponds to multilinear
 functionals on $\frak p/\frak m$, which is a complex subspace of $\frak g/\frak
 m$. Accordingly, it makes sense to say that a form of bidegree $(k,\ell)$ has a
 certain $(p,q)$--type with $p+q=k$, which we will phrase as being of $K$--type
 $(p,q)$. For the second degree, we say that a multilinear form $\om$ of bidegree
 $(k,\ell)$ has $P$--type $(r,s)$ if either $r+s=\ell$, $\om(I)=0$ and
 $\om\in\La^{r,s}\Bbb H^*_{\Bbb C}$ or $r+s=\ell-1$ and $\om=I^*\wedge \tilde\om$
 with $\tilde\om\in\La^{r,s}\Bbb H^*_{\Bbb C}$. We then use the same wording for
 forms on $G/M$.

 In this language, $I^*$ has bidegree $(0,1)$, while its $P$--type is $(0,0)$. On the
 other hand, there are obvious $M$--invariant linear functionals $Z^*$ and
 $\overline{Z}^*$, which both have bidegree $(1,0)$ and $K$--type $(1,0)$ and
 $(0,1)$, respectively. Now we can of course form (partial) exterior derivatives and
 wedge products of these one forms. Recall that for an $M$--invariant $k$--linear,
 alternating functional $\al$ on $(\frak g/\frak m)_{\Bbb C}^k$, the exterior
 derivative of the corresponding invariant $k$--form corresponds to the functional
 that sends $X_0+\frak m,\dots,X_k+\frak m$ to
 $$
 \textstyle\sum_{i<j} (-1)^{i+j} \alpha([X_i, X_j]+ \frak m, X_1+\frak m, \dots, \hat{\imath},
 \dots, \hat{\jmath}, \dots, X_k+\frak m).
 $$
 Using this, we can easily compute the derivatives of $I^*$, $Z^*$ and
 $\overline{Z}^*$ and then correct by wedge products to obtain invariant two--forms
 into which $I$, $Z$, and $\bar Z$ all insert trivially. Explicitly, we define
 $$
\omega_{2,0} := -i\partial_K\overline{Z}^* + i\overline{Z}^* \wedge Z^*, \qquad
\omega_{1,1} := \tfrac{1}{2}d_PZ^* - i Z^* \wedge I^*, \qquad\omega_{0,2} :=
\tfrac{1}{2}d_PI^*,
$$
whose properties are collected in the following table: 
\begin{table}[!htbp]
\begin{center}
\begin{tabular}{|c||c|c|c|c|}
  \hline
  form & bidegree & $K$-type & $P$-type & explicit formula \\
  \hline 
  $\omega_{2,0}$ & $(2,0)$ & $(1,1)$ & $(0,0)$ & $\omega_{2,0}(F_X^{1,0}, F_Y^{0,1}) = -\tfrac{i}2\langle X, Y\rangle$ \\
  $\omega_{1,1}$ & $(1,1)$ & $(1,0)$ & $(0,1)$ & $\omega_{1,1}(F_X^{1,0}, G_Y^{0,1}) = \tfrac12\langle X,Y\rangle$\\
  $\overline{\omega_{1,1}}$ & $(1,1)$ & $(0,1)$ & $(1,0)$ & $\overline{\omega_{1,1}}(F_X^{0,1}, G_Y^{1,0}) = \tfrac12\langle X,Y\rangle$\\
  $\omega_{0,2}$ & $(0,2)$ & $(0,0)$ & $(1,1)$ & $\omega_{0,2}(G_X^{1,0}, G_Y^{0,1}) = -\tfrac{i}2\langle X, Y\rangle$ \\
  \hline
\end{tabular}
\vspace*{2mm}
\caption{Invariant forms of degree $2$.}
\end{center}
\end{table} \newline
Using these invariant $2$-forms
we can write the images of the invariant $1$-forms under the partial derivatives as
\begin{align*}
 \partial_K Z^* &= 0, & d_PZ^* &= 2 \omega_{1,1} + 2i  Z^* \wedge I^*, \\
 \partial_K \overline{Z}^* &= \overline{Z}^* \wedge Z^* + i\omega_{2,0},  & d_P\overline{Z}^* &= 2 \overline{\omega_{1,1}} - 2i \overline{Z}^* \wedge I^*,\\
 \partial_K I^* &= Z^* \wedge I^*,  & d_P I^* &= 2 \omega_{0,2}.
 \end{align*}
In particular, using $d_K = \partial_K + \overline{\partial_K}$ it follows that
\begin{align*}
 d_Kd_P(Z^*) = d_Kd_P(\overline{Z}^*) = 0.
\end{align*} 
  For completeness, we also compute
\begin{align*}
 \partial_K \omega_{2,0} &= - Z^* \wedge \omega_{2,0},  & d_P\omega_{2,0} &= 2i Z^* \wedge \overline{\omega_{1,1}} - 2i \overline{Z}^* \wedge \omega_{1,1},\\
 \partial_K \omega_{1,1} &= 0,  & d_P\omega_{1,1} &= -2i I^* \wedge \omega_{1,1} + 2i Z^* \wedge \omega_{0,2},\\
 \partial_K \overline{\omega_{1,1}} &= -I^* \wedge\omega_{2,0}, & d_P\overline{\omega_{1,1}} &= 2i I^* \wedge \overline{\omega_{1,1}} - 2i \overline{Z}^* \wedge \omega_{0,2}, \\
 \partial_K \omega_{0,2} &= Z^* \wedge \omega_{0,2} - I^* \wedge \omega_{1,1}, &  d_P\omega_{0,2} &= 0.
 \end{align*}
 In particular, there are no new $M$--invariant forms obtained in this way.

 \section{Transforms adapted to the Rumin complex}\label{4}
 Having constructed the basic invariant forms on $G/M$, we can now proceed to
 construct higher degree forms with appropriate properties, and thus Poisson
 transforms for differential forms on a CR--sphere that descend to the Rumin
 complex.

 \subsection{The operator $\partial^*_P$.}\label{4.1}
 As discussed in the end of Section \ref{3.2}, a crucial role for verifying the
 conditions on a Poisson kernel from Corollary \ref{cor2.4} is played by the
 operator $\partial^*_P$ on differential forms on $G/M$ defined in that section. For
 invariant forms this operator corresponds to a map on alternating multilinear forms
 on $(\frak g/\frak m)_{\Bbb C}^*$ that we denote by the same symbol. Thus we start
 by computing the latter operator using the notation from Section \ref{3.4}.

  \begin{prop}\label{prop4.1}
 Let $\{e_1, \dotsc, e_n\}$ be an orthonormal basis of $\Bbb C^n$ with
 respect to the standard Hermitian inner product. Then for
 $\al\in\La^k(\frak g/\frak m)_{\Bbb C}^*$, and up to a non--zero
 multiple, we have
 \begin{align*}
   \partial^*_P\al=\textstyle\sum_{s=1}^n I^* \wedge \iota_{G_{e_s}^{0,1}}
   \iota_{G_{e_s}^{1,0}}\al,
 \end{align*}
 with $\iota$ denoting insertion operators. In particular, the operator $\partial^*_P$
 vanishes on the ideal generated by $I^*$.
\end{prop}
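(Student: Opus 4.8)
The plan is to work in the algebraic model at the origin, where $\partial^*_P$ is the Kostant codifferential $\partial^*$ acting on the $(\mathfrak k/\mathfrak m)^*$-factor of $\Lambda^*(\mathfrak g/\mathfrak m)^*_{\Bbb C}$ through the $M$-equivariant isomorphism $\mathfrak k/\mathfrak m\cong\mathfrak g/\mathfrak p$ of Section \ref{3.3}. Under this isomorphism the $\mathfrak g_{-1}$-directions are represented by the vectors $G_X\in\Bbb H$ and the $\mathfrak g_{-2}$-direction by $I$, while the functional $B(\nu_+,-)\in(\mathfrak g/\mathfrak p)^*$ attached to a generator $\nu_+\in\mathfrak g_2$ goes to a nonzero multiple of $I^*$, since it annihilates $\mathfrak g_{-1}$ and pairs nontrivially with $\mathfrak g_{-2}$. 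As $\partial^*_P$ is tensorial in the first ($K$-)degree and the $G$-insertions annihilate the $(\mathfrak p/\mathfrak m)^*$-factors, it suffices to rewrite the explicit formula for $\partial^*$ on $\Lambda^*\mathfrak p_+$ in this dictionary, the defining sign convention of $\partial^*_P$ being reproduced automatically.

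First I would invoke the identity \eqref{eq:tech-codiff} established inside the proof of Lemma \ref{lem2.3}, $\partial^*\al=\tfrac12\sum_s\nu_+\wedge(i_{[\eta_s,\nu_-]}i_{\xi_s}\al)$ for $B$-dual bases $\{\xi_s\}\subset\mathfrak g_{-1}$ and $\{\eta_s\}\subset\mathfrak g_1$. The key is that the double insertion $\sum_s i_{[\eta_s,\nu_-]}i_{\xi_s}$ is a basis-independent contraction: it is the canonical $B$-coevaluation $\sum_s\eta_s\otimes\xi_s\in\mathfrak g_1\otimes\mathfrak g_{-1}$ with the bracket $\operatorname{ad}(\nu_-)$ applied, up to sign, to its first leg. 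A direct computation in the matrix realization of Section \ref{3.3} shows that $\operatorname{ad}(\nu_-)\colon\mathfrak g_1\to\mathfrak g_{-1}$ is, in the $\Bbb C^n$-parametrization, multiplication by the imaginary scalar parametrizing $\nu_-$, hence a nonzero multiple of the complex structure $J$; in particular it preserves $(1,0)$- and $(0,1)$-types.

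The decisive step is a type count. The complex structure $J$ on $\mathfrak g_{\pm1}$ is induced by the adjoint action of a suitable element of $\mathfrak m$, and $B$ is $\operatorname{ad}$-invariant, so $J$ is skew for $B$; consequently $B$ pairs $\mathfrak g_{-1}^{1,0}$ only with $\mathfrak g_1^{0,1}$ and $\mathfrak g_{-1}^{0,1}$ only with $\mathfrak g_1^{1,0}$, and the coevaluation lives in $\mathfrak g_1^{0,1}\otimes\mathfrak g_{-1}^{1,0}\oplus\mathfrak g_1^{1,0}\otimes\mathfrak g_{-1}^{0,1}$. Applying the type-preserving map $\operatorname{ad}(\nu_-)$ to the first leg lands the whole contraction in $\mathfrak g_{-1}^{0,1}\otimes\mathfrak g_{-1}^{1,0}\oplus\mathfrak g_{-1}^{1,0}\otimes\mathfrak g_{-1}^{0,1}$, so that, after anticommuting the two insertions, $\sum_s i_{[\eta_s,\nu_-]}i_{\xi_s}$ equals a nonzero multiple of the mixed-type operator $\sum_t i_{e_t^{1,0}}i_{e_t^{0,1}}$ for a complex orthonormal basis $\{e_t\}$. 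Transporting back through the dictionary — $\nu_+\wedge$ to $I^*\wedge$ and $i_{e_t^{1,0}},i_{e_t^{0,1}}$ to $\iota_{G^{1,0}_{e_t}},\iota_{G^{0,1}_{e_t}}$ — yields the asserted formula up to a nonzero multiple, the order of the two insertions contributing only a sign. I expect the main obstacle to be exactly this bookkeeping: following the $(1,0)/(0,1)$ types through the coevaluation and $\operatorname{ad}(\nu_-)$ and checking that the surviving constant is genuinely nonzero, together with the careful transport of $\partial^*$ and its sign convention across $\mathfrak k/\mathfrak m\cong\mathfrak g/\mathfrak p$. The final assertion is then immediate: the formula writes $\partial^*_P\al$ as $I^*\wedge(\cdots)$, and for $\al=I^*\wedge\be$ one commutes the two insertions past the leading $I^*$ — on which $G^{1,0}_{e_s}$ and $G^{0,1}_{e_s}$ vanish — to produce a second factor $I^*$, whence $I^*\wedge I^*\wedge(\cdots)=0$.
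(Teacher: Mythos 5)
Your proposal is correct and takes essentially the same route as the paper: both rest on the identity \eqref{eq:tech-codiff} from the proof of Lemma \ref{lem2.3}, transport it through the Killing-form identification of $\frak g/\frak p$ with $\frak k/\frak m$ (sending $\eta_{e_s}$ to $G_{e_s}$ and $\nu_+$ to a nonzero multiple of $I^*$), extend to mixed bidegrees by the sign convention for $\partial^*_P$, and conclude by splitting into holomorphic and antiholomorphic types. The only difference is organizational: where you run the middle step basis-independently (coevaluation plus the observation that $\ad(\nu_-)\colon\frak g_1\to\frak g_{-1}$ is a multiple of $J$, with $B$-skewness of $J$ doing the type count), the paper performs the same computation concretely in the real basis $\{\xi_{e_s},\xi_{ie_s},\nu_+\}$ via the brackets $[\xi_{e_s},\nu_-]=-\eta_{ie_s}$ and $[\xi_{ie_s},\nu_-]=\eta_{e_s}$, decomposing $G_{e_s}$ into types only at the very end.
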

\begin{proof}
The definition of $\partial^*_P$ in Section \ref{3.2} was via the Kostant
codifferential $\partial^*$ and the identification of $\frak k/\frak m$ with $\frak
g/\frak p$. Thus we start by rewriting the formula \eqref{eq:tech-codiff} from the
proof of Lemma \ref{lem2.3} in terms of the basis of $\frak p$ induced by $\{e_1,
\dots, e_n\}$.

For all $X \in \Bbb C^n$, we denote by $\xi_X$ and $\eta_X$ the corresponding
elements in $\frak g_1$ and $\frak g_{-1}$, respectively. By $\nu_{\pm}$ we denote
the elements in $\frak g_{\pm 2}$ corresponding to $i$. Then $\{\xi_{e_s},
\xi_{ie_s}, \nu_+\}$ is a real basis of $\frak p_+$, and from Section \ref{3.3} we
deduce that $\{-\tfrac12\eta_{e_s}, -\tfrac12\eta_{ie_s}, -\nu_-\}$ is the dual basis
of $\frak g_-$ with respect to $B$. Moreover, the Lie brackets $[\xi_{e_s}, \nu_-]$
and $[\xi_{ie_s},\nu_-]$ equal $-\eta_{ie_s}$ and $\eta_{e_s}$, respectively. Thus,
for all $\be\in \Lambda^k \frak p_+$ formula \eqref{eq:tech-codiff} says that
$\partial^*\be$ is a non--zero multiple of 
$$ \textstyle\sum_{s = 1}^{n} \nu_+ \wedge \iota_{\eta_{ie_{s}}}
\iota_{\eta_{e_s}}\beta.
$$

To obtain the corresponding expression for $\partial^*_P$, we just have to interpret
this in terms of the $M$-module $\frak g/\frak m$, so we identify $\frak g_-$ with
$\frak k/\frak m$ and $\frak p_+$ with $(\frak k/\frak m)^*$ via the Killing
form. This readily shows that $\eta_{e_s}$ corresponds to the element $G_{e_s}$ while
$\nu_+$ corresponds to $I^*$ up to a non--zero factor. Therefore, we obtain for all
$\al \in \Lambda^{0,k}(\frak g/\frak m)^*$ that $\partial^*_P\al$ is a nonzero
multiple of 
\begin{align*}
 \textstyle\sum_{s=1}^n I^* \wedge \iota_{G_{ie_{s}}}\iota_{G_{e_s}}\al,
\end{align*}
and using the definition of $\partial^*_P$ on decomposable elements as well as
linearity this continues to hold for all elements in $\Lambda^{\ell, k}(\frak g/\frak
m)^*$. The claimed formula then holds by decomposing the basis vectors $G_{e_s}$ into
holomorphic and antiholomorphic parts.
\end{proof}

\subsection{Invariant forms of higher degree}\label{4.2}
Now we can start building up forms of higher degree from the basic invariant
two--forms introduced in Section \ref{3.4}. We do this in a notation that expresses
the bidegree as well as the $K$--type and the $P$--type.

\begin{definition}\label{def4.2}
  Let $p, q, k$ be non-negative integers such that $0 \le p, q, k-p, k-q \le n$. For
  all $\max\{0, p+q-k\} \le j \le \min\{p,q\}$ we define
$$
\omega_j^{p,q;k} := \omega_{2,0}^j \wedge \omega_{0,2}^{k-(p+q)+j} \wedge
\omega_{1,1}^{p-j} \wedge \overline{\omega_{1,1}}^{q-j}.
$$
\end{definition}

By construction, each of these forms vanishes upon insertion of $I$, $Z$ and
$\overline{Z}$. Moreover, from Table 2 we readily see that $\omega_j^{p,q;k}$ has
bidegree $(p+q,2k-p-q)$, $K$-type $(p,q)$, and $P$-type $(k-p,k-q)$. Since these
types conversely determine $p$, $q$ and $k$, forms with different values of these
parameters are automatically linearly independent (if non-zero). For later use, we
next show that for fixed $p$, $q$ and $k$ with $k>n$, there is a linear relation
between the forms $\omega_j^{p,q;k}$ for different values of $j$. 

\begin{prop}\label{prop_relation_omega}
  For all $0 \le p, q \le n$ we define
$$
\kappa_j^{p,q;k} := \textstyle\binom{k}{p} \binom{p}{j} \binom{k-p}{q-j},
$$
where we agree that $\kappa_j^{p,q;k} = 0$ if one of the binomial coefficients is not
defined. Then for $k>n$, we get
$$
\textstyle\sum_{j} \kappa_j^{p,q;k} \omega_j^{p,q;k} = 0.
$$
\end{prop}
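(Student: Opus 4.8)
The plan is to recognise $\kappa_j^{p,q;k}$ as a multinomial coefficient and to produce all of the relations simultaneously, as the $K$-type-homogeneous components of a single vanishing power of one auxiliary two-form. First I would record the purely algebraic identity
\[
\kappa_j^{p,q;k}=\binom{k}{p}\binom{p}{j}\binom{k-p}{q-j}
=\frac{k!}{j!\,(p-j)!\,(q-j)!\,(k-p-q+j)!},
\]
which is exactly the multinomial coefficient attached to the exponents $(j,\,p-j,\,q-j,\,k-p-q+j)$, whose sum is $k$. These are precisely the exponents occurring in $\omega_j^{p,q;k}=\omega_{2,0}^{\,j}\wedge\omega_{0,2}^{\,k-p-q+j}\wedge\omega_{1,1}^{\,p-j}\wedge\overline{\omega_{1,1}}^{\,q-j}$, which is the structural hint that the relations come from expanding a power of a sum of the four basic two-forms.

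Accordingly, I would set $\Theta:=\omega_{2,0}+\omega_{1,1}+\overline{\omega_{1,1}}+\omega_{0,2}$ and expand $\Theta^{k}$ by the multinomial theorem; since the four summands are two-forms they commute under $\wedge$, so the ordering of factors is irrelevant. By Table 2 the monomial $\omega_{2,0}^{a}\wedge\omega_{1,1}^{b}\wedge\overline{\omega_{1,1}}^{c}\wedge\omega_{0,2}^{d}$ with $a+b+c+d=k$ is precisely $\omega_{a}^{\,a+b,\,a+c;\,k}$, so writing $p=a+b$, $q=a+c$, $j=a$ turns the expansion into
\[
\Theta^{k}=\textstyle\sum_{p,q}\Big(\sum_{j}\kappa_j^{p,q;k}\,\omega_j^{p,q;k}\Big).
\]
For fixed $(p,q)$ the inner sum collects exactly the forms of $K$-type $(p,q)$, while distinct pairs $(p,q)$ give forms of distinct $K$-type, hence are linearly independent as noted after Definition \ref{def4.2}. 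Therefore $\Theta^{k}=0$ forces each inner sum $\sum_j\kappa_j^{p,q;k}\,\omega_j^{p,q;k}$ to vanish separately, which is the assertion.

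It thus remains to prove $\Theta^{k}=0$ for $k>n$, the step carrying the real content. Using the orthonormal basis $\{e_1,\dots,e_n\}$ and writing $\alpha_s,\beta_s,\gamma_s,\delta_s$ for the one-forms dual to $F^{1,0}_{e_s},F^{0,1}_{e_s},G^{1,0}_{e_s},G^{0,1}_{e_s}$, the four basic two-forms involve only these one-forms (they carry no $Z^*$, $\overline Z^*$, or $I^*$), and by Table 2 each is, up to its scalar, the sum over $s$ of a single such wedge. Grouping by $s$ I would write $\Theta=\sum_{s=1}^n\Theta_s$, where $\Theta_s$ lives in the exterior algebra of the four-dimensional block $\langle\alpha_s,\beta_s,\gamma_s,\delta_s\rangle$; distinct blocks are disjoint, so the even-degree forms $\Theta_s$ commute. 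The key computation is $\Theta_s\wedge\Theta_s=0$: the only contributions to $\alpha_s\wedge\beta_s\wedge\gamma_s\wedge\delta_s$ come from pairing $\omega_{2,0}$ with $\omega_{0,2}$, giving $(-\tfrac i2)^2=-\tfrac14$, and pairing $\omega_{1,1}$ with $\overline{\omega_{1,1}}$, giving $(\tfrac12)^2=+\tfrac14$, and these cancel.

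Granting $\Theta_s^2=0$, the multinomial expansion of $\Theta^{k}=\big(\sum_s\Theta_s\big)^{k}$ retains only its squarefree terms, i.e.\ $\Theta^{k}=k!\sum_{|S|=k}\prod_{s\in S}\Theta_s$, the sum running over $k$-element subsets $S\subseteq\{1,\dots,n\}$. For $k>n$ there are no such subsets, so $\Theta^{k}=0$, which completes the argument. The main obstacle is the rank identity $\Theta_s^2=0$: it encodes the geometric fact that $\Theta$ has rank at most $2n$, so that $\Theta^{n+1}=0$, and it hinges on the exact normalisations of the four invariant two-forms. I would therefore double-check those constants carefully, since a wrong relative sign or factor would leave a nonzero $\alpha_s\wedge\beta_s\wedge\gamma_s\wedge\delta_s$ term and destroy the whole argument.
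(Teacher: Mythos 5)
Your proof is correct, and it takes a genuinely different route from the paper's. The paper fixes $(p,q)$ and works from the elementary vanishing $\omega_{0,2}^k=0$ for $k>n$, exploiting it through insertion calculus: it inserts $p$ holomorphic and $q$ antiholomorphic $G$-vectors into $\omega_{0,2}^k$, repeatedly applies the identities $\iota_{G_X^{1,0}}\omega_{0,2}=i\,\iota_{F_X^{1,0}}\omega_{1,1}$, $\iota_{G_X^{0,1}}\omega_{0,2}=-i\,\iota_{F_X^{0,1}}\overline{\omega_{1,1}}$ and $\iota_{G_X^{0,1}}\omega_{1,1}=-i\,\iota_{F_X^{0,1}}\omega_{2,0}$ to trade all $G$-insertions for $F$-insertions, recognizes the outcome as $\iota_{F_{\mathbf Y}^{0,1}}\iota_{F_{\mathbf X}^{1,0}}\bigl(\sum_j\kappa_j^{p,q;k}\omega_j^{p,q;k}\bigr)$ up to a nonzero constant, and concludes with the same bidegree/$K$-type separation you invoke at the end. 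You instead prove the single stronger identity $\Theta^k=0$ and obtain all the relations at once as type-homogeneous components of a multinomial expansion; the content that in the paper is carried by the three insertion identities is carried in your argument by the block cancellation $\Theta_s^2=0$, and this does hold with the normalizations of the paper's table of invariant $2$-forms: the $\omega_{2,0}$--$\omega_{0,2}$ pairing contributes $(-\tfrac{i}{2})^2=-\tfrac14$, the $\omega_{1,1}$--$\overline{\omega_{1,1}}$ pairing contributes $+\tfrac14$, and the reordering $\alpha_s\wedge\delta_s\wedge\beta_s\wedge\gamma_s=+\,\alpha_s\wedge\beta_s\wedge\gamma_s\wedge\delta_s$ indeed costs no sign; equivalently, each $\Theta_s=\bigl(-\tfrac{i}{2}\alpha_s-\tfrac12\gamma_s\bigr)\wedge\bigl(\beta_s+i\delta_s\bigr)$ is decomposable, so $\Theta$ has rank $2n$ and $\Theta^{n+1}=0$. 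Your route buys conceptual clarity: it explains the multinomial shape $\kappa_j^{p,q;k}=k!/\bigl(j!\,(p-j)!\,(q-j)!\,(k-p-q+j)!\bigr)$, makes the threshold $k>n$ transparent, and treats all $(p,q)$ simultaneously (monomials with parameters outside the range of Definition \ref{def4.2} vanish identically, their $G$-part having degree greater than $n$ in the $\gamma_s$ or the $\delta_s$, so no case distinction is needed). What the paper's route buys is reusability: its insertion identities are exactly the tools deployed again in Lemma \ref{lem4.1} and in the appendix, whereas your auxiliary form $\Theta$ serves only this proposition. Two small points you should make explicit in a final write-up: first, that the four basic two-forms are supported on the blocks $\langle\alpha_s,\beta_s,\gamma_s,\delta_s\rangle$ because $I$, $Z$, $\overline{Z}$ insert trivially into them and their $K$- and $P$-types force the single pairing listed in the table (both facts are recorded in Section \ref{3.4}); second, that the decomposition of $\La^*(\frak g/\frak m)^*_{\Bbb C}$ by bidegree, $K$-type and $P$-type is a direct sum, so the vanishing of $\Theta^k$ genuinely forces each $(p,q)$-component to vanish separately rather than merely yielding one relation among them.
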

\begin{proof}
  Since $k>n$ the form $\omega_{0,2}^k$ is trivial. Hence for all $X_1, \dots, X_p$
  and $Y_1, \dots, Y_q \in \Bbb C^n$, we obtain
\begin{equation}\label{id0}
\iota_{G_{Y_q}^{0,1}} \dots \iota_{G_{Y_1}^{0,1}} \iota_{G_{X_p}^{1,0}}\dots
\iota_{G_{X_1}^{1,0}}\omega_{0,2}^k = 0.
\end{equation}
Now our definitions easily imply that for all $X \in \Bbb C^n$ we get
$$
\iota_{G_X^{1,0}}\omega_{0,2} = i\iota_{F_X^{1,0}}\omega_{1,1} \qquad
\iota_{G_X^{0,1}}\omega_{0,2} = -i\iota_{F_X^{0,1}}\overline{\omega_{1,1}} \qquad
\iota_{G_X^{0,1}}\omega_{1,1} = -i\iota_{F_X^{0,1}}\omega_{2,0}
$$
Inductively, the first of these relations readily implies that, up to a non--zero
constant, we get 
$$
\iota_{G_{X_p}^{1,0}}\dots\iota_{G_{X_1}^{1,0}}\omega_{0,2}^k=\iota_{F_{X_p}^{1,0}}\dots
\iota_{F_{X_1}^{1,0}}\omega_{1,1}^p\omega_{0,2}^{k-p}.
$$
The remaining two relations then show that, up to a factor $(-i)$, we get
$$
\iota_{G_{Y_1}^{0,1}}\omega_{1,1}^p\omega_{0,2}^{k-p}=\iota_{F_{Y_1}^{0,1}}(p\om_{2,0}\om_{1,1}^{p-1}\om_{0,2}^{k-p}+
(k-p)\overline{\om_{1,1}}\om_{1,1}^p\om_{0,2}^{k-p-1}). 
$$
Since insertion operators always anti-commute, we conclude inductively, that
\eqref{id0} can be rewritten as 
$$
0=\iota_{F_{Y_q}^{0,1}} \dots \iota_{F_{Y_1}^{0,1}} \iota_{F_{X_p}^{1,0}}\dots
\iota_{F_{X_1}^{1,0}}(\textstyle\sum_j\kappa_j^{p,q;k} \omega_j^{p,q;k}).
$$
But by construction $\sum_j\kappa_j^{p,q;k} \omega_j^{p,q;k}$ is a form that vanishes
under insertion of $I$, $Z$, and $\overline{Z}$ and has $K$--type $(p,q)$, so the
equation shows that it vanishes upon insertion of any $p+q$ tangent vectors from
$T'$. But again by construction it has bidegree $(p+q,2k-p-q)$, so the claim follows.
\end{proof}

The coefficients showing up in these relations are characterized by a linear
recursion that we discuss next. 

\begin{lemma}\label{lem_recursion_kappa}
  The coefficients $\ka_j:=\kappa_j^{p,q;k}$ defined in Proposition
  \ref{prop_relation_omega} satisfy the following system of linear
  relations, which determines them up to a non-zero constant. 
\begin{equation}\label{eqn_relations_kappa}
  \kappa_j(p-j)(q-j) = \kappa_{j+1}(j+1)(k-(p+q)+j+1) \quad \text{for all\ } j \in \Bbb
  Z.
\end{equation}
\end{lemma}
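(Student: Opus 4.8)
The plan is to split the lemma into two independent claims: first, that the explicit coefficients $\kappa_j^{p,q;k}=\binom{k}{p}\binom{p}{j}\binom{k-p}{q-j}$ from Proposition \ref{prop_relation_omega} do satisfy the recursion \eqref{eqn_relations_kappa}, and second, that conversely any sequence $(\kappa_j)_{j\in\Bbb Z}$ solving \eqref{eqn_relations_kappa} is a scalar multiple of this one. The first claim is a direct verification, while the second is where the genuine content lies.

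For the first claim I would simply compute the ratio of consecutive coefficients. The factor $\binom{k}{p}$ does not depend on $j$ and cancels, so using the elementary identities $\binom{p}{j+1}/\binom{p}{j}=(p-j)/(j+1)$ and $\binom{k-p}{q-j-1}/\binom{k-p}{q-j}=(q-j)/(k-p-q+j+1)$ one obtains
\[
\frac{\kappa_{j+1}}{\kappa_j}=\frac{(p-j)(q-j)}{(j+1)\,(k-(p+q)+j+1)},
\]
and cross-multiplying is exactly \eqref{eqn_relations_kappa}. With the convention that a binomial with out-of-range lower index vanishes, the only care needed is to confirm that \eqref{eqn_relations_kappa} also holds, with both sides equal to $0$, for the integers $j$ outside the support interval $a\le j\le b$, where $a:=\max\{0,p+q-k\}$ and $b:=\min\{p,q\}$; this is immediate once one notes which binomials vanish there.

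For the uniqueness claim the key observation is that the coefficients occurring in \eqref{eqn_relations_kappa} vanish exactly at the endpoints of this support interval. Writing $A_j:=(p-j)(q-j)$ (the coefficient of $\kappa_j$) and $B_j:=(j+1)(k-(p+q)+j+1)$ (the coefficient of $\kappa_{j+1}$), I would note that $A_j=0$ precisely for $j\in\{p,q\}$, so $A_b=0$ at the top end, while $B_j=0$ precisely for $j\in\{-1,\,p+q-k-1\}$, so $B_{a-1}=0$ just below the bottom end. At $j=a-1$ the relation reads $A_{a-1}\kappa_{a-1}=B_{a-1}\kappa_a=0$ and, since $A_{a-1}\neq0$, forces $\kappa_{a-1}=0$; iterating downward (where $A_j\neq0$) gives $\kappa_j=0$ for all $j<a$. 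Symmetrically, $A_b=0$ with $B_b\neq0$ forces $\kappa_{b+1}=0$ and hence $\kappa_j=0$ for all $j>b$. Within $[a,b-1]$ both $A_j$ and $B_j$ are nonzero, so the recursion determines each $\kappa_{j+1}$ from $\kappa_j$, and prescribing a single value $\kappa_{j_0}$ with $a\le j_0\le b$ pins down the whole sequence; thus the solution space is one-dimensional, which is the assertion. The hard part is purely the boundary bookkeeping: one must separate the cases $a=0$ versus $a=p+q-k$ at the lower end (and $b=p$ versus $b=q$ at the upper end) and verify that the surviving factor, namely $k-q+1$ or $k-p+1$, is strictly positive. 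This is exactly where the standing constraints $k\ge p$ and $k\ge q$ from Definition \ref{def4.2} enter, and handling these cases cleanly is the only step requiring attention.
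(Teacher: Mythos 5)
Your proof is correct and takes essentially the same route as the paper: a direct ratio computation of consecutive binomial coefficients for the verification, and for uniqueness the observation that the coefficient $(j+1)(k-(p+q)+j+1)$ of $\kappa_{j+1}$ vanishes just below $j=\max\{0,p+q-k\}$ while the coefficient $(p-j)(q-j)$ of $\kappa_j$ vanishes at $j=\min\{p,q\}$, forcing any solution to be supported on that interval and hence, by linearity of the recursion, determined up to a scalar. If anything, your boundary bookkeeping (checking that the surviving factors $k-q+1$ and $k-p+1$ are positive via $k\ge p$, $k\ge q$) is more explicit than the paper's terse argument, which even writes $j_0:=\min\{0,p+q-k\}$ where $\max\{0,p+q-k\}$ is evidently intended.
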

\begin{proof}
  One immediately verifies that the numbers $\kappa_j^{p,q;k}$ satisfy the relations
  \eqref{eqn_relations_kappa}. Conversely, let $(\kappa_j)_{j \in \Bbb Z}$ be
  another solution and put $j_0 := \min\{0, p+q-k\}$.  Then for $j = j_0-1$, the
  right hand side of \eqref{eqn_relations_kappa} is zero, so
  $\kappa_{j_0-1} = 0$ and inductively $\kappa_{j} = 0$ for all $j < j_0$. Similarly,
  for $j = \min\{p,q\}$ the left hand side of \eqref{eqn_relations_kappa}
  vanishes, whence $\kappa_j = 0$ for all $j > \min\{p,q\}$. Therefore, the sequence
  $(\kappa_j)_{j \in \Bbb Z}$ is possibly nontrivial only for
  $j_0 \le j \le \min\{p,q\}$, and since the relations are linear it follows that
  $\kappa_j$ is a constant multiple of $\kappa_j^{p,q;k}$.
\end{proof}

Finally, we can use Proposition \ref{prop4.1} to compute the action of $\partial^*_P$
on the forms $\om^{p,q;k}_j$:

\begin{lemma}\label{lem4.1} 
  Up to a nonzero constant that is independent of $p$, $q$, $k$ and $j$, we have
\begin{align*}
  \partial^*_P\omega_j^{p,q;k} = & (k-(p+q)+j)(k-j-(n+1)) I^* \wedge \omega_j^{p,q;k-1} \\ 
  &+ (p-j)(q-j) I^* \wedge \omega_{j+1}^{p,q;k-1}.
\end{align*}
\end{lemma}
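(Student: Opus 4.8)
The plan is to evaluate the formula of Proposition \ref{prop4.1} on $\al=\om_j^{p,q;k}$, so I must compute $\tsum_s\iota_{G_{e_s}^{0,1}}\iota_{G_{e_s}^{1,0}}\om_j^{p,q;k}$ and then wedge with $I^*$. Both insertion operators are antiderivations, so applying each of them to the wedge product of Definition \ref{def4.2} via the Leibniz rule produces a signed double sum in which $\iota_{G_{e_s}^{1,0}}$ lands on one tensor factor and $\iota_{G_{e_s}^{0,1}}$ on another (possibly the same). Because $\om_{2,0}$ has $P$-type $(0,0)$, $\om_{1,1}$ has only an antiholomorphic $\Bbb H$-slot, and $\overline{\om_{1,1}}$ only a holomorphic one, the holomorphic insertion $\iota_{G_{e_s}^{1,0}}$ can only hit a factor $\om_{0,2}$ or $\overline{\om_{1,1}}$, while $\iota_{G_{e_s}^{0,1}}$ can only hit $\om_{0,2}$ or $\om_{1,1}$. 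On each factor that is hit I rewrite the $G$-insertion as an $F$-insertion using the three relations
\[
\iota_{G_X^{1,0}}\om_{0,2}=i\,\iota_{F_X^{1,0}}\om_{1,1},\qquad \iota_{G_X^{0,1}}\om_{0,2}=-i\,\iota_{F_X^{0,1}}\overline{\om_{1,1}},\qquad \iota_{G_X^{0,1}}\om_{1,1}=-i\,\iota_{F_X^{0,1}}\om_{2,0}
\]
from the proof of Proposition \ref{prop_relation_omega}, together with the analogous relation $\iota_{G_X^{1,0}}\overline{\om_{1,1}}=i\,\iota_{F_X^{1,0}}\om_{2,0}$. This turns each $G$-insertion into a leftover $F$-insertion sitting on a factor whose type has been shifted by $(1,1)$ in the appropriate direction.

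The second step is to carry out the sum over $s$. In every surviving term the index $s$ now appears in exactly two leftover insertions, one of type $F^{1,0}$ and one of type $F^{0,1}$, and these contract against the explicit pairings in Table 2. Two re-gluing identities do all the work. If the two leftover insertions sit on distinct factors $\mu,\nu$, then $\tsum_s(\iota_{F_{e_s}^{1,0}}\mu)\wedge(\iota_{F_{e_s}^{0,1}}\nu)$ is an $M$-invariant form whose bidegree, $K$-type and $P$-type are determined by those of $\mu$ and $\nu$; since there are no $M$-invariant forms other than the basic ones (as recorded at the end of Section \ref{3.4}), it is a nonzero constant multiple of the matching $\om_{2,0}$, $\om_{1,1}$, $\overline{\om_{1,1}}$ or $\om_{0,2}$, the constant coming from $\tsum_s\langle X,e_s\rangle\langle e_s,Y\rangle=\langle X,Y\rangle$. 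If instead both leftover insertions sit on the same factor, they fully contract it to $\tsum_s\langle e_s,e_s\rangle=n$ times a constant. The $K$-type $(p,q)$ is preserved throughout, consistent with $\partial^*_P$ acting only in the $P$-degree.

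Collecting terms by the type of the re-glued factor, exactly four families of contributions arise. Writing $m:=k-(p+q)+j$ for the number of $\om_{0,2}$-factors: hitting two distinct $\om_{0,2}$-copies re-glues into one $\om_{0,2}$ and contributes a multiple $m(m-1)$ of $I^*\wedge\om_j^{p,q;k-1}$, while hitting the same copy contracts it to the scalar $n$ and contributes a multiple $mn$ of $I^*\wedge\om_j^{p,q;k-1}$ \emph{with the opposite sign}; hitting an $\om_{0,2}$ with one insertion and an $\om_{1,1}$ (respectively $\overline{\om_{1,1}}$) with the other re-glues into an $\om_{1,1}$ (respectively $\overline{\om_{1,1}}$) and contributes multiples $m(p-j)$ and $m(q-j)$ of $I^*\wedge\om_j^{p,q;k-1}$; and hitting $\overline{\om_{1,1}}$ with $\iota_{G^{1,0}}$ and $\om_{1,1}$ with $\iota_{G^{0,1}}$ re-glues two freshly created $\om_{2,0}$-factors into one, raising $j$ and contributing a multiple $(p-j)(q-j)$ of $I^*\wedge\om_{j+1}^{p,q;k-1}$. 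Summing the first four multiplicities gives $m(m-1)+m(p-j)+m(q-j)-mn=m\bigl((m-1)+(p-j)+(q-j)-n\bigr)=m\bigl(k-j-(n+1)\bigr)$, which is exactly the stated coefficient, while the last one matches $(p-j)(q-j)$.

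The main obstacle is the bookkeeping of the Leibniz signs and multiplicities, and above all checking that the fully contracted ``trace'' contribution enters with the sign opposite to the three re-gluing contributions: this relative sign is precisely what turns the naive sum into $m\bigl(k-j-(n+1)\bigr)$ and is the sole source of the factor $n$. Since Proposition \ref{prop4.1} determines $\partial^*_P$ only up to an overall nonzero scalar and each re-gluing identity carries its own constant, I would fix all of these constants once and for all on the lowest-degree case, so that only the relative coefficients --- which are forced by the combinatorics above --- need to be verified in the general statement.
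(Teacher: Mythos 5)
Your proposal follows essentially the same route as the paper's proof: both evaluate the formula of Proposition \ref{prop4.1} on $\om_j^{p,q;k}$ using the Leibniz rule for the two insertion operators, then sum over $s$ and re-express the resulting wedge products of one-forms (including the same-factor ``trace'' contraction) in terms of the basic invariant two-forms, collecting multiplicities into the stated coefficients. Your sign analysis --- the fully contracted trace term entering with the sign opposite to the three re-gluing terms, via the antiderivation sign when the second insertion passes the leftover one-form --- is correct and is precisely the ``direct computation'' that the paper leaves implicit.
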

\begin{proof}
  In view of Proposition \ref{prop4.1} we mainly have to compute
  $\sum_s\iota_{G_{e_s}^{0,1}} \iota_{G_{e_s}^{1,0}}\om^{p,q;k}_j$. Using the
  derivation property of insertion operators and the definitions, we readily see that
$$
\iota_{G_{e_{s}^{1,0}}}\om^{p,q;k}_j=(k-p-q+j)(\iota_{G_{e_s}^{1,0}}\om_{0,2})\om^{p,q;k-1}_j+
(q-j)(\iota_{G_{e_s}^{1,0}}\overline{\om_{1,1}})\om^{p,q-1;k-1}. 
$$
The image of the right hand side under $\iota_{G_{e_s}^{0,1}}$ can again be computed
using the derivation property of insertion operators and the definitions. This time,
the expressions involve the one-forms like $\iota_{G_{e_s}^{0,1}}\om_{0,2}$ and
$\iota_{G_{e_s}^{0,1}}\om_{1,1}$. After summation over $s$ the wedge products of two
one-forms occurring in the summands can be re-expressed in terms of the basic
two-forms and the result follows from a direct computation.
\end{proof}

\subsection{The case $p+q\leq n$}\label{4.3}
We will now start the construction of Poisson kernels such that the associated
Poisson transforms descend to the Rumin complex as described in Corollary
\ref{cor2.4}. We focus on the case of transforms that preserve the degree of
differential forms and then for a transform defined on $\Ga(\Cal H_\ell)$ we can
restrict to the case of a transform with values in $\Om^{p,q}(G/K)$ for some fixed
type $(p,q)$ such that $p+q=\ell$. To obtain such a transform, we have to construct a
Poisson kernel $\ph_{p,q}$ of bidegree $(p+q, 2n+1-(p+q))$ and $K$-type $(p,q)$. To
ensure that the transform descends to the Rumin complex, we need that
$\partial^*_P\ph_{p,q}=0$ and $\partial^*_Pd_P\ph_{p,q}=0$.

In Proposition \ref{prop_homology_bundles}, we have seen that the nature of the
homology bundles $\Cal H_\ell$ is different for $\ell\leq n$ and $\ell>n$. Thus it is
not surprising that the appropriate choices for the kernels $\ph_{p,q}$ look rather
different in the cases $p+q\leq n$ and $p+q>n$, and we will discuss these cases
separately starting with the former. Denoting by $H\subset T(G/P)$ the
contact subbundle and putting $Q:=T(G/P)/H$, we know that in this case the homology
bundle $\Cal H_\ell$ is a subbundle of $\La^\ell H^*$. To get a transform that is
non-zero on this homology bundle, we have to ensure that the kernel $\ph_{p,q}$
satisfies $\iota_I \ph_{p,q} \neq 0$ and $I^* \wedge \ph_{p,q} = 0$. Hence it is a
natural idea to try constructing $\ph_{p,q}$ as a wedge product of $I^*$ with some
invariant form $\pi$ of bidegree $(p+q, 2n-(p+q))$ and $K$-type $(p,q)$. By
Proposition \ref{prop4.1}, this automatically implies that $\partial^*_P\ph_{p,q}=0$.

Since we also have to ensure that $\partial^*_Pd_P\ph_{p,q}=0$, it is natural to try
using a $d_P$-exact form $\pi$ in the above construction. For $0\leq j\leq
\min\{p,q\}$ define
\begin{equation}\label{pi-def1} 
\pi_j^{p,q;k} := (d_PI^*)^{k-(p+q)} \wedge (2d_Kd_P\omega_{1,1})^j \wedge
(d_PZ^*)^{p-j} \wedge (d_P\overline{Z}^*)^{q-j},
\end{equation} 
which is of bidegree $(p+q,2k-(p+q))$, of $K$-type $(p,q)$ and $d_P$--exact. In
particular, any linear combination of the forms $\pi_j^{p,q;n}$ can be wedged with
$I^*$ to obtain a form $\ph_{p,q}$ as above. The definition readily implies that
$d_P(I^*\wedge\pi_j^{p,q;k})=\pi_j^{p,q;k+1}$ (which explains why we do not restrict
our considerations to the case $k=n$).

\begin{thm}\label{thm_main1}
  For $p+q\leq n$ define
  $\ph_{p,q}:= \sum_{j=0}^{\min\{p,q\}}\ka_j^{p,q;n+1}I^* \wedge\pi_j^{p,q;n}$, where
  the constants $\ka_j^{p,q;n+1}$ are defined in Proposition
  \ref{prop_relation_omega}. Then this kernel gives rise to a Poisson transform
  $\Ph:\Om^{p+q}(G/P,\Bbb C)\to\Om^{p,q}(G/K)$ which satisfies the conditions of
  Proposition \ref{prop2.4} and thus factorizes to the Rumin complex as described in
  Corollary \ref{cor2.4} and has harmonic values.
\end{thm}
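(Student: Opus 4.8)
The plan is to check the two pointwise conditions isolated at the end of Section~\ref{3.2}, namely $\partial^*_P\ph_{p,q}=0$ and $\partial^*_Pd_P\ph_{p,q}=0$. By Proposition~\ref{prop3.2}(iii) these are equivalent to $\Ph\o\partial^*=0$ and $\Ph\o d\o\partial^*=0$, which is condition $(iii)$ of Proposition~\ref{prop2.4}; the descent to $\Ga(\Cal H_{p+q})$ and the harmonicity of the values are then immediate from Corollary~\ref{cor2.4}. First, though, I would record that $\ph_{p,q}$ is an admissible kernel: by \eqref{pi-def1} each $\pi_j^{p,q;n}$ has bidegree $(p+q,2n-(p+q))$ and $K$--type $(p,q)$, so $I^*\wedge\pi_j^{p,q;n}$ has bidegree $(p+q,2n+1-(p+q))$ and the same $K$--type, and all the constituents are $G$--invariant. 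Hence $\ph_{p,q}$ is a $G$--invariant form of bidegree $(p+q,2n+1-(p+q))$ and $K$--type $(p,q)$ and defines a Poisson transform $\Ph:\Om^{p+q}(G/P,\Bbb C)\to\Om^{p,q}(G/K)$. The first condition is then immediate: $\ph_{p,q}$ lies in the ideal generated by $I^*$, on which $\partial^*_P$ vanishes by Proposition~\ref{prop4.1}, so $\partial^*_P\ph_{p,q}=0$.

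The content is the second condition. Using the identity $d_P(I^*\wedge\pi_j^{p,q;n})=\pi_j^{p,q;n+1}$ recorded after \eqref{pi-def1}, I would first rewrite
\begin{equation*}
 d_P\ph_{p,q}=\textstyle\sum_j\ka_j^{p,q;n+1}\,\pi_j^{p,q;n+1}.
\end{equation*}
The crucial algebraic step is to express $\pi_j^{p,q;k}$ through the basic two--forms modulo the ideal generated by $I^*$. From Section~\ref{3.4} one reads off $d_PI^*=2\om_{0,2}$, $d_PZ^*\equiv2\om_{1,1}$ and $d_P\overline{Z}^*\equiv2\overline{\om_{1,1}}$ modulo $(I^*)$, and a short calculation from the tables of partial derivatives gives $2d_Kd_P\om_{1,1}\equiv4\,\om_{2,0}\wedge\om_{0,2}$ modulo $(I^*)$ as well. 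Substituting these into \eqref{pi-def1} and collecting the prefactors, the exponents $k-(p+q)$, $j$, $p-j$, $q-j$ contribute $2^{k-(p+q)}4^{j}2^{p-j}2^{q-j}=2^{k}$, a factor \emph{independent of $j$}, so that
\begin{equation*}
 \pi_j^{p,q;k}\equiv2^{k}\,\om_j^{p,q;k}\pmod{(I^*)}.
\end{equation*}

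Since $\partial^*_P$ kills the ideal generated by $I^*$, this turns the problem into an algebraic one: $\partial^*_Pd_P\ph_{p,q}=2^{n+1}\sum_j\ka_j^{p,q;n+1}\,\partial^*_P\om_j^{p,q;n+1}$. Now I would invoke Lemma~\ref{lem4.1} with $k=n+1$, where the factor $k-j-(n+1)$ collapses to $-j$, and collect the coefficient of each $I^*\wedge\om_j^{p,q;n}$. The two contributions come from the first term of the $j$--th summand and the second term of the $(j-1)$--th summand, yielding
\begin{equation*}
 -\ka_j^{p,q;n+1}\,j\,(n+1-(p+q)+j)+\ka_{j-1}^{p,q;n+1}\,(p-j+1)(q-j+1).
\end{equation*}
This vanishes for all $j$ exactly when the $\ka_j^{p,q;n+1}$ obey the recursion \eqref{eqn_relations_kappa} with $k=n+1$, which by Lemma~\ref{lem_recursion_kappa} is precisely their defining property. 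Thus $\partial^*_Pd_P\ph_{p,q}=0$, establishing condition $(iii)$ of Proposition~\ref{prop2.4}.

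I expect the only genuinely delicate point to be the identity $2d_Kd_P\om_{1,1}\equiv4\,\om_{2,0}\wedge\om_{0,2}\pmod{(I^*)}$. It requires the reality of $\om_{2,0}$ (which holds since $\om_{2,0}$ is, up to a real scalar, the pullback of the K\"ahler form, so that the conjugate $\overline{\om_{2,0}}$ arising naturally in the computation may be replaced by $\om_{2,0}$) together with the precise signs in $d_KZ^*$ and $d_K\om_{0,2}$. It is exactly the appearance of the bare constant $4$ here, rather than a $j$--dependent one, that makes the total prefactor $2^{k}$ uniform in $j$ and hence lets the single recursion \eqref{eqn_relations_kappa} do the work. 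Everything else is the bookkeeping that Definition~\ref{def4.2}, Proposition~\ref{prop_relation_omega} and Lemma~\ref{lem4.1} were set up to handle, after which the factorization $\underline{\Ph}:\Ga(\Cal H_{p+q})\to\Om^{p,q}(G/K)$ with harmonic values follows verbatim from Corollary~\ref{cor2.4}.
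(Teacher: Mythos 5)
Your proposal is correct and follows the paper's argument almost step for step: the reduction to the two kernel conditions $\partial^*_P\ph_{p,q}=0$ and $\partial^*_Pd_P\ph_{p,q}=0$ via Proposition \ref{prop3.2}(iii) and Proposition \ref{prop2.4}, the vanishing of the first condition by ideal membership and Proposition \ref{prop4.1}, the identity $d_P(I^*\wedge\pi_j^{p,q;n})=\pi_j^{p,q;n+1}$, and the congruence $\pi_j^{p,q;k}\equiv 2^k\om_j^{p,q;k}$ modulo the ideal generated by $I^*$ (with the uniform factor $2^k$, whose independence of $j$ you correctly identify as the crucial point; the delicate ingredient $2d_Kd_P\om_{1,1}\equiv 4\,\om_{2,0}\wedge\om_{0,2}$ indeed checks out via $\overline{\partial}_KZ^*=Z^*\wedge\overline{Z}^*-i\om_{2,0}$, which uses the reality of $\om_{2,0}$ as you note). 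The only divergence is the finish: where you apply Lemma \ref{lem4.1} with $k=n+1$ termwise and verify that the coefficient of each $I^*\wedge\om_j^{p,q;n}$ vanishes by the recursion \eqref{eqn_relations_kappa} of Lemma \ref{lem_recursion_kappa}, the paper instead invokes Proposition \ref{prop_relation_omega} directly, which gives the stronger statement $\sum_j\ka_j^{p,q;n+1}\om_j^{p,q;n+1}=0$ as a form (since $\om_{0,2}^{n+1}=0$), so that applying $\partial^*_P$ is trivial. Your coefficientwise check proves only the weaker vanishing of $\partial^*_P$ of that sum, but that is all the theorem needs, and it is exactly the computation the paper carries out in the analogous spots in the proof of Theorem \ref{thm_main2}; the two finishes are interchangeable, with the paper's being marginally cleaner and yours making the role of the recursion explicit.
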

\begin{proof}
  It remains to prove that $\partial^*_Pd_P\ph_{p,q}=0$. To do this, we first
  observe that by definition $d_P(I^*\wedge\pi_j^{p,q;n})=\pi_j^{p,q;n+1}$. To
  compute $\partial^*_P\pi_j^{p,q;n+1}$ we observe that for any form $\tau$, we get
  $\iota_I(I^*\wedge\tau)=\tau-I^*\wedge \iota_I\tau$. Thus Proposition \ref{prop4.1}
  shows that $\partial_P^*\tau=\partial_P^*\iota_I(I^*\wedge\tau)$. But the formulae
  in the end of Section \ref{3.4} imply that, up to elements in the ideal generated
  by $I^*$, we can replace $d_PZ^*$ by $2\om_{1,1}$ and $d_P\overline{Z}^*$ by
  $2\overline{\om_{1,1}}$ and $2d_Kd_P\omega_{1,1}$ by
  $4\om_{2,0}\wedge\omega_{0,2}$. This then shows that
  $I^*\wedge\pi_j^{p,q;k}=2^k I^* \wedge \om_j^{p,q;k}$ and thus
  $\partial_P^*\pi_j^{p,q;k}=2^k\partial_P^*\om_j^{p,q;k}$. But this implies
  $\partial^*_Pd_P\ph_{p,q}=2^{n+1}\sum_j\ka_j^{p,q;n+1}\partial^*_P\om_j^{p,q;n+1}$,
  which vanishes since $\sum_j\ka_j^{p,q;n+1}\om_j^{p,q;n+1}=0$ by Proposition
  \ref{prop_relation_omega}.
\end{proof}

\subsection{The case $p+q>n$}\label{4.4}
The case of high degree forms is significantly more complicated. For $\ell\geq n+1$,
the homology bundle $\Cal H_\ell$ is a quotient of $\La^{\ell-1}H^*\otimes Q^*$ by
Proposition \ref{prop_homology_bundles}. Consequently, for an appropriate Poisson
kernel $\ph_{p,q}$ we certainly must have $I^*\wedge \ph_{p,q}\neq 0$, so compared to
the low-degree case, already satisfying $\partial_P^*\ph_{p,q} = 0$ becomes a non-trivial
problem. The basic strategy to construct appropriate forms will again be to define a
family of $d_P$-closed forms with non-trivial wedge product with $I^*$ and construct
$\ph_{p,q}$ as a linear combination of wedge products of these with appropriate
invariant one-forms. Explicitly, for $p+q-k\leq j\leq\min\{p,q\}$, we define
\begin{equation}\label{tpi-def}
\tilde\pi_j^{p,q;k}:=(2id_KZ^*)^j\wedge (d_PI^*)^{k-(p+q)+j}\wedge
(d_PZ^*)^{p-j}\wedge (d_P\overline{Z}^*)^{q-j}. 
\end{equation} 
From Section \ref{3.4}, we know that $d_Pd_KZ^*=0$, so all these forms are
$d_P$-closed and visibly $\tilde\pi_j^{p,q;k}$ is of bidegree $(p+q,2k-(p+q))$ and of
$K$-type $(p,q)$. Using these, we can now formulate our second main theorem. 

\begin{thm}\label{thm_main2}
  For $p+q\geq n+1$ and $\alpha$, $\beta \in \mathbb{C}$ define
\begin{align*}
\phi_{p,q}^{\alpha, \beta} := \phi_{p,q} := \textstyle\sum_{j\in\Bbb Z} &\Bigl( \al_j Z^*\wedge\tilde\pi_j^{p-1,q;n} + \beta_j \overline{Z}^* \wedge \tilde\pi_j^{p,q-1;n} \\ &+ 
\gamma_j I^* \wedge \tilde\pi_j^{p,q;n} + 2i \delta_j  I^* \wedge Z^* \wedge \overline{Z}^*
\wedge\tilde\pi_j^{p-1,q-1;n-1}\Bigr),
\end{align*}
where $\alpha_j:=\alpha\kappa_{j+1}^{p,q+1;n+1}$, $\beta_j := \beta \kappa_{j+1}^{p+1,q;n+1}$, 
$\gamma_j:=\tfrac{\alpha(p+1) + \beta(q+1)}{p+q-n} \kappa_{j+1}^{p+1,q+1;n+1}$ and
$\delta_j:=-\tfrac{(n+1)(\alpha(n+1-q)+ \beta(n+1-p))}{p+q-n} \kappa_{j+1}^{p,q;n}$, with the constants $\ka$
defined in Proposition \ref{prop_relation_omega}. Then these kernels give rise to a $2$-parameter family of
Poisson transforms $\Ph^{\alpha, \beta}:\Om^{p+q}(G/P)\to\Om^{p,q}(G/K)$ which satisfy the
conditions of Proposition \ref{prop2.4} and thus factorize to the Rumin complex as
described in Corollary \ref{cor2.4} and have harmonic values.
\end{thm}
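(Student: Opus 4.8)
The plan is to reduce everything, following the discussion at the end of Section \ref{3.2}, to the two identities $\partial^*_P\ph_{p,q}=0$ and $\partial^*_P d_P\ph_{p,q}=0$. Granting these, Proposition \ref{prop3.2}(iii) gives $\Ph^{\al,\be}\o\partial^*=0$ and $\Ph^{\al,\be}\o d\o\partial^*=0$, so the equivalent conditions of Proposition \ref{prop2.4} hold, and the factorization through the Rumin complex together with harmonicity of the values is then exactly Corollary \ref{cor2.4}. As a preliminary one reads off from Table 2 and Definition \ref{def4.2} that each of the four families making up $\ph_{p,q}$ has bidegree $(p+q,2n+1-(p+q))$ and $K$-type $(p,q)$, so $\ph_{p,q}$ is indeed an admissible Poisson kernel for a degree-preserving transform $\Om^{p+q}(G/P)\to\Om^{p,q}(G/K)$.

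All computations are carried out modulo the ideal generated by $I^*$, since $\partial^*_P$ kills this ideal by Proposition \ref{prop4.1}. The essential preparatory step is to rewrite the $d_P$-closed blocks $\tilde\pi_j^{p,q;k}$ from \eqref{tpi-def} in terms of the standard forms $\om_j^{p,q;k}$ of Definition \ref{def4.2}, using the structure equations at the end of Section \ref{3.4}: modulo $I^*$ one replaces $d_PZ^*$, $d_P\overline{Z}^*$, $d_PI^*$ by $2\om_{1,1}$, $2\overline{\om_{1,1}}$, $2\om_{0,2}$. The one new phenomenon compared with Theorem \ref{thm_main1} is that $d_KZ^*$ contributes, besides a multiple of $\om_{2,0}$, an extra term proportional to $Z^*\wedge\overline{Z}^*$; hence each $\tilde\pi_j$ with $j\geq1$ carries, on top of its leading $\om$-part, corrections involving $Z^*\wedge\overline{Z}^*$. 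The whole argument therefore lives in two channels that $\partial^*_P$ preserves and that are linearly independent (the second one survives insertion of $Z$): an ``$A$''-channel of forms built purely from the $\om_{\bullet,\bullet}$, and a ``$B$''-channel carrying an explicit factor $Z^*\wedge\overline{Z}^*$.

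For $\partial^*_P\ph_{p,q}=0$ the two families containing $I^*$ drop out, and in the remaining two families the $Z^*\wedge\overline{Z}^*$-corrections of the $\tilde\pi_j$ are annihilated by the leading factor $Z^*$ (resp.\ $\overline{Z}^*$), leaving only the $\om$-part. Applying Proposition \ref{prop4.1} and Lemma \ref{lem4.1} produces forms whose upper index is $n-1$; since Proposition \ref{prop_relation_omega} yields relations only above upper index $n$, these are linearly independent, so the sum vanishes term-by-term in $j$. That term-by-term vanishing is precisely the recursion of Lemma \ref{lem_recursion_kappa} satisfied by $\al_j=\al\,\ka_{j+1}^{p,q+1;n+1}$ and $\be_j=\be\,\ka_{j+1}^{p+1,q;n+1}$.

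The identity $\partial^*_P d_P\ph_{p,q}=0$ is the heart of the matter, and the main obstacle is combinatorial. Because the $\tilde\pi_j$ are $d_P$-closed, $d_P\ph_{p,q}$ comes only from differentiating the leading one-forms; reducing modulo $I^*$ and applying $\partial^*_P$ via Lemma \ref{lem4.1} separates into the two channels. The $A$-channel assembles into $\sum_j(\al_j+\be_j+\ga_j)\,\om_j^{p,q;n+1}$, and since the level-$n$ forms are independent it vanishes exactly when $\al_j+\be_j+\ga_j$ is a multiple of $\ka_j^{p,q;n+1}$ --- for then the relation of Proposition \ref{prop_relation_omega} at upper index $n+1$, pushed through $\partial^*_P$, kills it. Checking this amounts to the finite binomial identity that $\al\,\ka_{j+1}^{p,q+1;n+1}+\be\,\ka_{j+1}^{p+1,q;n+1}+\tfrac{\al(p+1)+\be(q+1)}{p+q-n}\ka_{j+1}^{p+1,q+1;n+1}$ is proportional to $\ka_j^{p,q;n+1}$, for $\ka_j^{p,q;k}=\binom{k}{p}\binom{p}{j}\binom{k-p}{q-j}$; it is exactly the coefficient of $\ga_j$ that forces it. The $B$-channel collects the $Z^*\wedge\overline{Z}^*$-contributions generated by the $d_KZ^*$-factors of the first three families, and one checks that the fourth family, with $\de_j=-\tfrac{(n+1)(\al(n+1-q)+\be(n+1-p))}{p+q-n}\ka_{j+1}^{p,q;n}$, cancels them through the recursion of Lemma \ref{lem_recursion_kappa}. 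The common denominator $p+q-n$, which is where the hypothesis $p+q\geq n+1$ enters, is precisely what lets the same pair $(\al,\be)$ meet both requirements; and the bookkeeping of the $Z^*\wedge\overline{Z}^*$ terms through the reduction and through $\partial^*_P$ is the step most prone to sign and index errors, where I would concentrate the effort.
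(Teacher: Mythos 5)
Your proposal is correct and follows essentially the same route as the paper's proof: reduction to $\partial^*_P\ph_{p,q}=0$ and $\partial^*_Pd_P\ph_{p,q}=0$ via Propositions \ref{prop2.4} and \ref{prop3.2}, computation modulo the ideal generated by $I^*$ using Proposition \ref{prop4.1}, replacement of the $\tilde\pi_j$'s by the $\om_j$'s while tracking the $Z^*\wedge\overline{Z}^*$-correction coming from $d_KZ^*$, and a two-channel cancellation in which (as in the paper) the combined $B$-channel coefficients $\tilde\delta_j=(j+1)(\al_{j+1}+\be_{j+1}+\ga_{j+1})+\de_j$ are \emph{not} zero but a multiple of $\ka_{j+1}^{p,q;n+1}$ and are killed only after applying $\partial^*_P$ through the recursion of Lemma \ref{lem_recursion_kappa}, exactly as your description allows. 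The only cosmetic difference is that you dispose of the $A$-channel by the form-level relation of Proposition \ref{prop_relation_omega} once $\al_j+\be_j+\ga_j\propto\ka_j^{p,q;n+1}$ is checked, whereas the paper applies Lemma \ref{lem4.1} termwise and invokes Lemma \ref{lem_recursion_kappa}; the two are equivalent, and the remaining ``direct computations'' you flag are precisely those the paper also leaves as such.
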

\begin{proof}
  We have to prove that $\partial^*_P\ph_{p,q}=0$ and
  $\partial^*_Pd_P\ph_{p,q}=0$. As noted in the proof of Theorem \ref{thm_main1}, we
  get $\partial^*_P\tau=\partial^*_Pi_I(I^*\wedge\tau)$ which will be used
  heavily. Using the fact that, up to elements in the ideal generated by $Z^*$, the
  form $2id_KZ^*$ is congruent to $2\om_{2,0}$, similar arguments as in the proof of
  Theorem \ref{thm_main1} show that up to a multiple
  $I^*\wedge Z^*\wedge\tilde\pi_j^{p-1,q;k}$ equals
  $I^*\wedge Z^*\wedge\om^{p-1,q;k}_j$ as well as that
  $I^* \wedge \overline{Z}^* \wedge \tilde\pi_j^{p,q-1;k}$ coincides with
  $I^* \wedge \overline{Z}^* \wedge\om^{p,q-1;k}_j$.  Next, from Proposition
  \ref{prop4.1} we conclude that
  $\partial^*_P(Z^*\wedge\om^{p-1,q;k}_j)=-Z^*\wedge\partial^*_P \om^{p-1,q;k}_j$ and
  similarly
  $\partial^*_P(\overline{Z}^* \wedge \om^{p,q-1;k}_j) = - \overline{Z}^*
  \wedge \partial_P^* \om^{p,q-1;k}_j$.
  Since we can ignore forms that contain $I^*$ in computing the image of $\ph_{p,q}$
  under $\partial^*_P$ we conclude that $\partial_P^*\phi_{p,q}$ coincides with
$$
\textstyle\sum_{j\in\Bbb
  Z}\left(\al_jZ^*\wedge\partial^*_P\om^{p-1,q;n}_j + \beta_j \overline{Z}^* \wedge \partial_P^*\om^{p,q-1;n}_j \right)
$$
up to a multiple. Now using Lemma \ref{lem4.1}, we see that the right hand side can be written as
$\sum_ja_jZ^*\wedge I^*\wedge\om^{p-1,q;n-1}_j + b_j \overline{Z}^* \wedge I^* \wedge \omega_j^{p,q-1;n-1}$, and inserting for $\al_j$ and $\beta_j$ we see that,
up to an overall constant,
\begin{align*}
a_j &=-\kappa_{j+1}^{p,q+1;n+1}(n-p-q+j+1)(j+1)+\ka^{p,q+1;n+1}_j(p-j)(q-j+1), \\
b_j &= -\kappa_{j+1}^{p+1,q;n+1}(n-p-q+j+1)(j+1)+\ka^{p+1,q;n+1}_j(p-j+1)(q-j),
\end{align*} 
which both vanish bye Lemma \ref{lem_recursion_kappa}.

To compute $d_P\ph_{p,q}$, we first observe that definition \eqref{tpi-def}
readily implies that $d_P(Z^*\wedge\tilde\pi^{p-1,q;n}_j)$, $d_P(\overline{Z}^* \wedge \tilde\pi_j^{p,q-1;n})$ and
$d_P(I^*\wedge\tilde\pi^{p,q;n}_j)$ are all equal to $\tilde\pi^{p,q;n+1}_j$. On the
other hand, modulo the ideal generated by $I^*$,
$d_P(I^* \wedge Z^* \wedge \overline{Z}^* \wedge\tilde\pi_j^{p-1,q-1;n-1})$ is
congruent to $Z^* \wedge \overline{Z}^* \wedge\tilde\pi_j^{p-1,q-1;n}$, implying 
\begin{equation}\label{main}
\partial^*_Pd_p\ph_{p,q}=\textstyle\sum_j\Bigl((\al_j+\be_j + \gamma_j)\partial^*_P\tilde\pi^{p,q;n+1}_j+ 
2i\delta_j\partial^*_P(Z^* \wedge \overline{Z}^* \wedge\tilde\pi_j^{p-1,q-1;n})\Bigr). 
\end{equation}
From Section \ref{3.4} we know that $d_KZ^*=Z^*\wedge\overline{Z}^*-i\om_{2,0}$ and
using this we conclude as in the proof of Theorem \ref{thm_main1} that 
\begin{equation}\label{tech1}
  I^*\wedge \tilde\pi^{p,q;n+1}_j=2^{n+1}I^*\wedge(\om^{p,q;n+1}_j+ijZ^* \wedge
  \overline{Z}^* \wedge\om^{p-1,q-1;n}_{j-1}).
\end{equation}  
Thus in the right hand side of \eqref{main}, we may replace $\tilde\pi^{p,q;n+1}_j$
by $2^{n+1}(\om^{p,q;n+1}_j+ijZ^* \wedge \overline{Z}^* \wedge\om^{p-1,q-1;n}_{j-1})$. But
\eqref{tech1} also implies that
$$
  I^*\wedge Z^* \wedge \overline{Z}^* \wedge\tilde\pi_j^{p-1,q-1;n}=2^{n}I^*\wedge
  Z^* \wedge \overline{Z}^* \wedge \om_j^{p-1,q-1;n}.
$$
From Proposition \ref{prop4.1} we next conclude that
$\partial^*_P(Z^* \wedge \overline{Z}^*\wedge\tau)=Z^* \wedge
\overline{Z}^*\wedge\partial^*_P\tau$
for any form $\tau$. Hence in the second term in the right hand side of \eqref{main},
we can replace $\tilde\pi_j^{p-1,q-1;n}$ by $2^{n}\om_j^{p-1,q-1;n}$. Reordering
the sum, we obtain
\begin{align}\label{main2}
\partial^*_Pd_P\ph_{p,q}=2^{n+1}\textstyle\sum_j &\Bigl(\tilde{\gamma}_j\partial^*_P\om^{p,q;n+1}_j +\tilde{\delta}_j iZ^* \wedge \overline{Z}^* \wedge \partial^*_P\om^{p-1,q-1;n}_j\Bigr), 
\end{align}
where $\tilde{\gamma}_j := \alpha_j + \beta_j + \gamma_j$ and $\tilde{\delta}_j := (j+1)\tilde{\gamma}_{j+1}+\delta_j$. Now a direct computation shows that the coefficients in the above sum are given by
\begin{align*}
 \tilde{\gamma}_j &= \frac{\alpha(n+1-q) + \beta(n+1-p)}{p+q-n} \kappa_{j}^{p,q;n+1}, & 
 \tilde{\delta}_j &= - (n+1-(p+q))\tilde{\gamma}_{j+1}.
\end{align*}
Applying the formula for the $P$-codifferential from Lemma \ref{lem4.1} to the first sum we deduce that the coefficient of $I^* \wedge \omega_{j+1}^{p,q;n}$ is given by
\begin{align*}
 -(j+1)(n-(p+q)+j+2)\tilde{\gamma}_{j+1} + (p-j)(q-j)\tilde{\gamma}_j,
\end{align*}
which is trivial due to Lemma \ref{lem_recursion_kappa}. Similarly, using again the formula from Lemma \ref{lem4.1} the coefficient of $I^* \wedge Z^* \wedge \overline{Z}^* \wedge \omega_{j+1}^{p-1,q-1;n-1}$ is given by
\begin{align*}
 -(j+2)(n-(p+q)+j+3)\tilde{\delta}_{j+1} + (p-j-1)(q-j-1)\tilde{\delta}_j,
\end{align*}
so since $\tilde{\delta}_j$ is a constant multiple of $\kappa_{j+1}^{p,q;n+1}$ this is again trivial due to Lemma \ref{lem_recursion_kappa}. All in all we see that $\partial^*_Pd_P\varphi_{p,q} = 0$.
\end{proof}

\subsection{Properties of the image of the Poisson transforms}\label{4.5}
In the next step we analyze the properties of the images of the Poisson transforms
$\Phi$ constructed in Theorems \ref{thm_main1} and \ref{thm_main2} with respect
to several differential operators on $G/K$. By Proposition \ref{prop3.2} it suffices
to apply the corresponding $M$-equivariant maps to the underlying Poisson kernels
$\phi_{p,q}$. Since for their construction we had to distinguish the cases
$p+q \le n$ and $p+q > n$, their properties will also differ significantly in these
cases.

\begin{prop}\label{prop_properties_below_half} 
 For $p+q \le n$ let $\Phi \colon \Omega^{p+q}(G/P, \mathbb{C}) \to \Omega^{p,q}(G/K)$ be the Poisson transform constructed in Theorem \ref{thm_main1}. Then the image of $\Phi$ is contained in the space of harmonic, coclosed and primitive differential forms on $G/K$. 
 \end{prop}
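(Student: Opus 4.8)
The statement has three assertions, of which harmonicity is already available: Theorem \ref{thm_main1} shows that $\Phi$ satisfies the conditions of Proposition \ref{prop2.4}, so all its values are harmonic. For the other two I would pass to the Poisson kernel through Proposition \ref{prop3.2}. Since $\Cal L^\ast\o\Phi$ is again a Poisson transform, with kernel $\Cal L_K^\ast\phi_{p,q}$, and $\delta\o\Phi$ is one with kernel $\delta_K\phi_{p,q}$, it suffices to establish the two kernel identities $\Cal L_K^\ast\phi_{p,q}=0$ (which forces all values to be primitive) and $\delta_K\phi_{p,q}=0$ (which forces them to be coclosed). Throughout I would use the explicit expression $\phi_{p,q}=2^n\,I^\ast\wedge\Theta$ with $\Theta:=\sum_j\kappa_j^{p,q;n+1}\omega_j^{p,q;n}$, which follows from the identity $I^\ast\wedge\pi_j^{p,q;k}=2^kI^\ast\wedge\omega_j^{p,q;k}$ proved in Theorem \ref{thm_main1}.

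For primitivity, the first point is that $\Cal L_K^\ast$ reduces on $\phi_{p,q}$ to an operator acting purely in the $\Bbb C^n$-directions. Indeed $\Cal L_K^\ast$ is the contraction with the bivector dual to the $K$-K\"ahler form $\omega_K=\omega_{2,0}-iZ^\ast\wedge\overline Z^\ast$, whose $F$-part is exactly the dual of $\omega_{2,0}$ and whose remaining part contracts $Z$ with $\overline Z$. As every $\omega_j^{p,q;n}$, and also $I^\ast$, is annihilated by $\iota_Z$ and $\iota_{\overline Z}$, the $Z$-part drops out; and since $\iota_{F}I^\ast=0$ the $F$-contraction passes through $I^\ast$. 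Hence $\Cal L_K^\ast\phi_{p,q}=\pm2^nI^\ast\wedge\Lambda_0\Theta$, where $\Lambda_0$ is the Lefschetz adjoint of $\omega_{2,0}$. A direct computation, using $\iota_{F}\omega_{0,2}=0$ and the explicit formulas of Section \ref{3.4}, then yields $\Lambda_0\omega_j^{p,q;n}=a_j\,\omega_{j-1}^{p-1,q-1;n-1}+b_j\,\omega_j^{p-1,q-1;n-1}$ with $b_j$ proportional to $(p-j)(q-j)$ and $a_j$ to $j\,(n-(p+q)+j+1)$. Collecting the coefficient of each $\omega_{j'}^{p-1,q-1;n-1}$ in $\Lambda_0\Theta$ gives $\kappa_{j'+1}^{p,q;n+1}a_{j'+1}+\kappa_{j'}^{p,q;n+1}b_{j'}$, which vanishes by the recursion of Lemma \ref{lem_recursion_kappa}, exactly as in the proofs of Theorems \ref{thm_main1} and \ref{thm_main2}. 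Thus $\Lambda_0\Theta=0$ and $\phi_{p,q}$ is $K$-primitive.

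With primitivity in hand I would obtain coclosedness from the K\"ahler identities for the $K$-factor, which are valid because the $K$-directions carry the pulled-back K\"ahler structure of $G/K$. Up to the usual signs these give $\delta_K=i\bigl(\Cal L_K^\ast\overline{\partial}_K-\overline{\partial}_K\Cal L_K^\ast\bigr)-i\bigl(\Cal L_K^\ast\partial_K-\partial_K\Cal L_K^\ast\bigr)$, so $\Cal L_K^\ast\phi_{p,q}=0$ collapses this to $\delta_K\phi_{p,q}=i\,\Cal L_K^\ast(\overline{\partial}_K-\partial_K)\phi_{p,q}$. Now, acting on $\Theta$, the operator $\partial_K$ produces only $Z^\ast$- and $I^\ast$-terms and $\overline{\partial}_K$ only $\overline Z^\ast$- and $I^\ast$-terms (since $\Theta$ itself contains neither $Z^\ast$ nor $\overline Z^\ast$), so no summand of $(\overline{\partial}_K-\partial_K)\phi_{p,q}$ carries both $Z^\ast$ and $\overline Z^\ast$; consequently the $Z\overline Z$-contraction in $\Cal L_K^\ast$ still drops out and $\Cal L_K^\ast$ again reduces to $\Lambda_0$. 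Writing $(\overline{\partial}_K-\partial_K)\phi_{p,q}=2^n\bigl((\overline Z^\ast-Z^\ast)\wedge I^\ast\wedge\Theta-I^\ast\wedge(\overline{\partial}_K-\partial_K)\Theta\bigr)$, the first group is killed by $\Lambda_0$ precisely because $\Lambda_0\Theta=0$, so it remains to prove $\Lambda_0(\overline{\partial}_K-\partial_K)\Theta=0$. This I would check by differentiating $\Theta$ with the formulas for $\partial_K\omega_{2,0},\partial_K\omega_{1,1},\partial_K\overline{\omega_{1,1}},\partial_K\omega_{0,2}$ from the end of Section \ref{3.4}, re-expanding the result in the forms $\omega_{j'}^{\bullet,\bullet;\bullet}$, applying the same reduction $\Cal L_K^\ast=\Lambda_0$, and verifying once more that each family of like terms cancels by Lemma \ref{lem_recursion_kappa}.

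The genuine obstacle is this last cancellation. For primitivity a single application of $\Lambda_0$ meets the recursion directly; for coclosedness one must first carry $\Theta$ through the partial derivatives $\partial_K,\overline{\partial}_K$ --- which shift the indices $(p,q;k)$ and sprinkle in $Z^\ast$- and $I^\ast$-factors --- and only afterwards contract, so the bookkeeping needed to split the outcome into families each governed by \eqref{eqn_relations_kappa} is the delicate step. One may instead avoid the K\"ahler identities altogether and compute $\delta_K\phi_{p,q}=-\ast_Kd_K\ast_K\phi_{p,q}$ directly, using the pointwise Weil identity $\ast_K\phi_{p,q}=c\,\Cal L_K^{\,n+1-(p+q)}\phi_{p,q}$ for the now-primitive kernel together with $d_K\o\Cal L_K=\Cal L_K\o d_K$; this replaces the K\"ahler identities by an explicit form of $\ast_K$ on high-degree $P$-forms but leads to the same combinatorial core.
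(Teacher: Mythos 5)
Your harmonicity step and your primitivity argument essentially coincide with the paper's: the paper likewise reduces everything to the Poisson kernel via Proposition \ref{prop3.2}, uses $I^*\wedge\pi_j^{p,q;k}=2^kI^*\wedge\omega_j^{p,q;k}$, and applies the formula $\Cal L_K^*\omega_j^{p,q;k}=2j(n+1-(p+q)+j)\,\omega_{j-1}^{p-1,q-1;k-1}-2(p-j)(q-j)\,\omega_j^{p-1,q-1;k-1}$ of Proposition \ref{prop_adjoint_K_Lefschetz} (proved in the appendix via the Hodge--star identities of Theorem \ref{thm_wedge_product_2_form_omega} rather than by the direct bivector contraction you sketch, but your claimed coefficients agree), after which the coefficients $\tilde\kappa_j=(j+1)(n+1-(p+q)+j+1)\kappa_{j+1}^{p,q;n+1}-(p-j)(q-j)\kappa_j^{p,q;n+1}$ vanish by \eqref{eqn_relations_kappa}. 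For coclosedness you genuinely diverge. The paper does not invoke K\"ahler identities at all: it applies the explicit formula for $\partial_K^*(I^*\wedge\omega_j^{p,q;k})$ from Proposition \ref{prop_formula_K_codifferential} and finds $\partial_K^*\phi_{p,q}=4i\sum_j\tilde\kappa_j\,I^*\wedge\overline{Z}^*\wedge\pi_j^{p-1,q-1;n-1}$ with the \emph{same} coefficients $\tilde\kappa_j$ as in the Lefschetz computation, so the ``delicate second bookkeeping'' you anticipate never arises there --- one application of the recursion kills both conditions simultaneously. It then gets $\overline{\partial}_K^*\phi_{p,q}=\overline{\partial_K^*\phi_{q,p}}=0$ from the symmetry $\overline{\phi_{p,q}}=\phi_{q,p}$, a shortcut that would also halve your computation. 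What your route buys is avoiding the long appendix derivation of the $\partial_K^*$-formulas; what it costs is a fresh cancellation check, which you outline but do not execute.

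There is one step you must actually justify: the validity of the K\"ahler identities ``for the $K$-factor'' at kernel level on $G/M$. The operators $\partial_K$, $\overline{\partial}_K$, $\Cal L_K^*$, $\delta_K$ on $\Om^*(G/M)$ are not the operators of a K\"ahler manifold --- for instance $\partial_K I^*=Z^*\wedge I^*\neq 0$ shows that $d_K$ is not simply the pullback of $d$ on $G/K$ acting on pulled-back data --- so the identity $\delta_K=i\bigl(\Cal L_K^*\overline{\partial}_K-\overline{\partial}_K\Cal L_K^*\bigr)-i\bigl(\Cal L_K^*\partial_K-\partial_K\Cal L_K^*\bigr)$ is not automatic, and ``the $K$-directions carry the pulled-back K\"ahler structure'' is not an argument. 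Fortunately the repair is cheap: since the proposition only concerns the image of $\Phi$, apply the genuine K\"ahler identities on the K\"ahler manifold $G/K$ to $\delta\circ\Phi$ and convert each composite back to a kernel via Proposition \ref{prop3.2} (together with the remark at the start of the appendix that $\partial\circ\Phi$ has kernel $\partial_K\phi$, and similarly for $\overline{\partial}$). Using $\Cal L_K^*\phi_{p,q}=0$ this exhibits $\delta\circ\Phi$ as the transform with kernel a multiple of $\Cal L_K^*(\overline{\partial}_K-\partial_K)\phi_{p,q}$, after which your reductions are sound: no summand carries both $Z^*$ and $\overline{Z}^*$, so the $\iota_Z\iota_{\overline{Z}}$-part of $\Cal L_K^*$ drops out, and the $(\overline{Z}^*-Z^*)\wedge I^*\wedge\Theta$ block dies by $\Lambda_0\Theta=0$. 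The remaining cancellation of $\Lambda_0(\overline{\partial}_K-\partial_K)\Theta$ is indeed governed by \eqref{eqn_relations_kappa} and does work out, but as it stands your proposal is a correct and workable alternative route rather than a complete proof. (A harmless slip: your expression for $\omega_K$ disagrees with \eqref{eq:pullback_Kaehler_form} in the sign and scale of the $Z^*\wedge\overline{Z}^*$-part; this is immaterial since that part drops out wherever you use it.)
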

 \begin{proof} 
   Recall from Theorem \ref{thm_main1} that the Poisson kernel underlying $\Phi$ is
   given by
   $\phi_{p,q} = \sum_{j = 0}^{\min\{p,q\}} \kappa_j^{p,q;n+1} I^*\wedge
   \pi_j^{p,q;n}$,
   where the forms $\pi_j^{p,q;n}$ are defined by \eqref{pi-def1} and the constants
   $\kappa_j^{p,q;n+1}$ are defined in Proposition \ref{prop_relation_omega}. By
   Proposition \ref{prop3.2} it suffices to show that this kernel is contained in the
   kernel of $\delta_K$ and $\mathcal{L}_K^*$.
  
   We have seen in the proof of Theorem \ref{thm_main1} that the forms
   $\pi_j^{p,q;k}$ satisfy the relation
   $I^* \wedge \pi_j^{p,q;k} = 2^k I^* \wedge \omega_j^{p,q;k}$. Hence, by linearity
   we can immediately apply the formulae for the adjoint of the $K$-Lefschetz map
   from Proposition \ref{prop_adjoint_K_Lefschetz}. After shifting the index
   appropriately we obtain that
   $\Cal L_K^*\phi_{p,q} = 4\sum_{j} \tilde{\kappa}_j I^* \wedge \pi_j^{p-1,q-1;n-1}$
   with coefficients
  \begin{align*}
  \tilde{\kappa}_j = (j+1)(n+1-(p+q)+j+1) \kappa_{j+1}^{p,q;n+1} - (p-j)(q-j)\kappa_j^{p,q;n+1}.
\end{align*}
   However, due to equation \eqref{eqn_relations_kappa} these constants are trivial.
   
   Similarly, applying the formulae for $\partial_K^*$ from Proposition
   \ref{prop_formula_K_codifferential} to the Poisson kernel yields
   $\partial_K^*\phi_{p,q} = 4i\sum_{j} \tilde{\kappa}_j I^* \wedge \overline{Z}^*
   \wedge \pi_j^{p-1,q-1;n-1} = 0.$
   Furthermore, the complex conjugate of $\phi_{p,q}$ equals $\phi_{q,p}$, which
   readily implies
   $\overline{\partial}_K^*\phi_{p,q}= \overline{\partial_K^*\phi_{q,p}} = 0$ and
   hence also $\delta_K\phi_{p,q} = 0$.
 \end{proof}

 Next, we determine properties of the $2$-parameter family of Poisson transforms for the case $p+q > n$ constructed in Theorem \ref{thm_main2}. 
 
 \begin{prop}\label{prop_properties_above_half} For $p+q > n$ and $\alpha$,
   $\beta \in \mathbb{C}$ the image of the Poisson transform
   $\Ph_{p,q}^{\alpha, \beta}$ constructed in Theorem \ref{thm_main2} is contained in
   the space of coprimitive forms. Moreover,
   $\partial^* \circ \Phi^{\alpha, \beta}_{p,q}$ is independent of $\beta$,
   $\overline{\partial}^* \circ \Phi^{\alpha, \beta}_{p,q}$ is independent of
   $\alpha$ and these satisfy
 \begin{align*}
   \partial^* \circ \Phi^{\alpha, *}_{p+1,q} &= \overline{\partial}^* \circ \Phi^{*, \alpha}_{p,q+1}.
 \end{align*}
 Furthermore, the partial derivatives are related via
 \begin{align*}
  -2i(n-p)\partial \circ \Phi_{p,q}^{\alpha, \beta} &= (p+q-n+1)\Phi^{0, \beta}_{p+1,q} \circ d, \\
  2i(n-q)\overline{\partial} \circ \Phi_{p,q}^{\alpha, \beta} &= (p+q-n+1)\Phi^{\alpha, 0}_{p,q+1} \circ d.
 \end{align*}
\end{prop}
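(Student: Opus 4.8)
The whole proposition reduces, via Proposition \ref{prop3.2}, to algebraic identities for the Poisson kernel $\phi_{p,q}^{\alpha,\beta}$ on $G/M$: each natural operator on the image of $\Phi^{\alpha,\beta}_{p,q}$ corresponds to a tensorial operation on the kernel, so it suffices to verify the analogous statements for $\phi_{p,q}^{\alpha,\beta}$, exactly as in the proof of Proposition \ref{prop_properties_below_half}. Since $\phi_{p,q}^{\alpha,\beta}$ depends linearly on $(\alpha,\beta)$, I would split it as $\alpha\,\phi_{p,q}^{1,0}+\beta\,\phi_{p,q}^{0,1}$ and treat the two parameters separately. Two tools recur throughout: the translation formulae from the proof of Theorem \ref{thm_main2} (notably \eqref{tech1} together with the congruences modulo the ideals generated by $I^*$ and $Z^*$), which rewrite each building block $\tilde\pi_j^{p',q';k}$ in terms of the forms $\omega_j^{p',q';k}$ of Definition \ref{def4.2}; and the linear recursion \eqref{eqn_relations_kappa} of Lemma \ref{lem_recursion_kappa} together with the vanishing relation of Proposition \ref{prop_relation_omega}, which is available precisely because $p+q>n$ pushes the relevant upper index above $n$.

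For coprimitivity, by Proposition \ref{prop3.2}(ii) it suffices to show $\mathcal L_K\phi_{p,q}^{\alpha,\beta}=0$, and since $\mathcal L_K$ is the wedge product with $\omega_K$, which is a nonzero multiple of $\omega_{2,0}$, this amounts to $\omega_{2,0}\wedge\phi_{p,q}^{\alpha,\beta}=0$. After replacing the $\tilde\pi$-blocks by $\omega_j$-forms, wedging with $\omega_{2,0}$ merely raises the $\omega_{2,0}$-power, sending $\omega_j^{p',q';k}$ to $\omega_{j+1}^{p'+1,q'+1;k+1}$, thereby landing in the regime with upper index exceeding $n$. I would then reorganise the resulting coefficients with Lemma \ref{lem_recursion_kappa} and exhibit the outcome as a multiple of a vanishing combination $\sum_j\kappa_j\,\omega_j$ from Proposition \ref{prop_relation_omega}, the remaining (boundary) contributions dropping out for degree reasons when $p=n$ or $q=n$.

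For the codifferential statements I would use that, by Proposition \ref{prop3.2}(i) and the splitting $\delta=\partial^*+\overline{\partial}^*$, the transform $\partial^*\circ\Phi^{\alpha,\beta}_{p,q}$ has Poisson kernel $\partial_K^*\phi_{p,q}^{\alpha,\beta}$. Applying the explicit formula for $\partial_K^*$ from Proposition \ref{prop_formula_K_codifferential} as in Proposition \ref{prop_properties_below_half} and collecting terms, the $\beta$-part $\partial_K^*\phi_{p,q}^{0,1}$ assembles into coefficients that vanish by \eqref{eqn_relations_kappa}, which gives independence of $\beta$. The conjugation symmetry $\overline{\phi_{p,q}^{\alpha,\beta}}=\pm\phi_{q,p}^{\bar\beta,\bar\alpha}$, combined with $\overline{\partial}^*\gamma=\overline{\partial^*\overline{\gamma}}$, then transfers this to independence of $\overline{\partial}^*\circ\Phi^{\alpha,\beta}_{p,q}$ from $\alpha$. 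The identity $\partial^*\circ\Phi^{\alpha,*}_{p+1,q}=\overline{\partial}^*\circ\Phi^{*,\alpha}_{p,q+1}$ would follow by computing the two kernels $\partial_K^*\phi_{p+1,q}^{\alpha,0}$ and $\overline{\partial}_K^*\phi_{p,q+1}^{0,\alpha}$ and checking, after the same reductions, that they agree.

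Finally, for the relations between partial derivatives I would compare kernels directly: by Proposition \ref{prop3.2}(i) the transform $\partial\circ\Phi^{\alpha,\beta}_{p,q}$ has kernel $\partial_K\phi_{p,q}^{\alpha,\beta}$, while by Proposition \ref{prop3.2}(iii) the transform $\Phi^{0,\beta}_{p+1,q}\circ d$ has kernel $d_P\phi_{p+1,q}^{0,\beta}$ (the sign $(-1)^{\ell-k}$ being $+1$ here, since both degrees equal $p+q+1$). Expanding $\partial_K\phi_{p,q}^{\alpha,\beta}$ with the derivation property and the first-order formulae of Section \ref{3.4}, in particular $\partial_K Z^*=0$, $\partial_K\overline{Z}^*=\overline{Z}^*\wedge Z^*+i\omega_{2,0}$ and $\partial_K I^*=Z^*\wedge I^*$, and likewise expanding $d_P\phi_{p+1,q}^{0,\beta}$ as in the proof of Theorem \ref{thm_main2}, I would match the two sides term by term, the factors $-2i(n-p)$ and $p+q-n+1$ emerging from Lemma \ref{lem_recursion_kappa}; the $\overline{\partial}$-relation then follows by complex conjugation. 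I expect the main obstacle to be exactly this coefficient bookkeeping across transforms attached to different types $(p,q)$, $(p+1,q)$ and $(p,q+1)$: the term $\partial_K\overline{Z}^*=\overline{Z}^*\wedge Z^*+i\omega_{2,0}$ mixes a $Z^*$-contribution with an $\omega_{2,0}$-contribution, and it is this mixing that both produces the factor $(n-p)$ and forces the $\alpha$-dependence of $\partial_K\phi_{p,q}^{\alpha,\beta}$ to cancel, as demanded by the $\beta$-only right-hand side.
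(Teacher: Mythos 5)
Most of your outline tracks the paper's actual proof: the codifferential statements are indeed obtained by feeding the kernel into the explicit formulae of Proposition \ref{prop_formula_K_codifferential}, simplifying the coefficients with Lemma \ref{lem_recursion_kappa} (the paper's coefficients $b_j$, $c_j$ involve only the $\alpha_j$, and the $\beta$-contributions in $d_j$ cancel), and transferring to $\overline{\partial}_K^*$ via $\overline{\phi_{p,q}^{\alpha,\beta}}=\phi_{q,p}^{\overline{\beta},\overline{\alpha}}$; and the derivative relations are proved exactly as you propose, by comparing $\partial_K\phi_{p,q}^{\alpha,\beta}$ (computed from $\partial_K\tilde\pi_j^{p,q;k}=(k-(p+q)+j)(Z^*\wedge\tilde\pi_j^{p,q;k}-I^*\wedge\tilde\pi_j^{p+1,q;k})$, with the $\alpha$-dependence cancelling just as you predict) against $d_P\phi_{p+1,q}^{0,\beta}$, with the correct sign $+1$ in Proposition \ref{prop3.2}(iii).

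However, your coprimitivity argument rests on a false premise. The pullback of the K\"ahler form to $G/M$ is \emph{not} a multiple of $\omega_{2,0}$: by \eqref{eq:pullback_Kaehler_form} it equals $\tfrac12(iZ^*\wedge\overline{Z}^*+\omega_{2,0})=\tfrac{i}{2}d_KZ^*$, and it cannot be proportional to $\omega_{2,0}$ since the latter was constructed so that $Z$ and $\overline{Z}$ insert trivially, whereas the K\"ahler form is nondegenerate on $\frak p/\frak m$. So reducing coprimitivity to $\omega_{2,0}\wedge\phi_{p,q}=0$ proves the wrong identity, and in fact a false one: modulo the correct identity $\omega_M\wedge\phi_{p,q}=0$, one gets $\omega_{2,0}\wedge\phi_{p,q}=-iZ^*\wedge\overline{Z}^*\wedge\phi_{p,q}$, which reduces to $2^n\sum_j\gamma_j\,I^*\wedge Z^*\wedge\overline{Z}^*\wedge\omega_j^{p,q;n}$; here the upper index is $n$, not $>n$, so no relation from Proposition \ref{prop_relation_omega} is available, the forms $\omega_j^{p,q;n}$ are independent, and this is nonzero in general. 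The paper's mechanism is different: since $\omega_M=\tfrac{i}{2}d_KZ^*$ and $\tilde\pi_j^{p,q;k}$ is built from powers of $2id_KZ^*$ by \eqref{tpi-def}, one has the clean shift $\omega_M\wedge\tilde\pi_j^{p,q;k}=\tfrac14\tilde\pi_{j+1}^{p+1,q+1;k+1}$; after converting to $\omega$-forms, the leading sums die by Proposition \ref{prop_relation_omega}, and the residue is $\sum_j a_j\,I^*\wedge Z^*\wedge\overline{Z}^*\wedge\omega_{j+1}^{p,q;n}$ with $a_j=-\alpha_j(q-j)-\beta_j(p-j)+(j+2)\gamma_{j+1}+\delta_j$, which vanishes identically by a computation using the precise definitions of all four coefficient families $\alpha_j,\beta_j,\gamma_j,\delta_j$. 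In particular the cancellation is a coefficient identity valid for every $j$ — it is not a boundary or degree phenomenon at $p=n$ or $q=n$ as you suggest, and your version of the argument would fail at this step.
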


\begin{proof} For the first part we need to show that the wedge product of
  $\phi_{p,q}$ with the pullback $\omega_M$ of the K\"{a}hler form on $G/K$ along the
  canonical projection is trivial. From \eqref{eq:pullback_Kaehler_form} we know that
  $\omega_M = \frac{1}{2}(\omega_{2,0} + iZ^* \wedge \overline{Z}^*)$, which
  coincides with $\tfrac{i}{2}d_KZ^*$. Consequently, by definition of
  $\tilde{\pi}_j^{p,q;k}$ in \eqref{tpi-def} it readily follows that
  $\omega_M\wedge \tilde{\pi}_j^{p,q;k} = \frac{1}{4}
  \tilde{\pi}_{j+1}^{p+1,q+1;k+1}$.
  Therefore, we can write $\omega_M \wedge \phi_{p,q}$ up to an overall constant as
  \begin{align}\label{eq:wedge_product_Kaehler_Poisson_1}
  \begin{aligned}
\textstyle\sum_{j\in\Bbb Z} &\Bigl( \al_j Z^*\wedge\tilde\pi_{j+1}^{p,q+1;n+1} + \beta_j \overline{Z}^* \wedge \tilde\pi_{j+1}^{p+1,q;n+1} \\ &+ 
\gamma_j I^* \wedge \tilde\pi_{j+1}^{p+1,q+1;n+1} + 2i \delta_j  I^* \wedge Z^* \wedge \overline{Z}^*
\wedge\tilde\pi_{j+1}^{p,q;n}\Bigr)
\end{aligned}
\end{align}
with the coefficients defined in Theorem \ref{thm_main2}.  Next, using the formulae
for the basic invariant forms a direct computation shows that
$Z^* \wedge \tilde{\pi}_{j+1}^{p,q+1;n+1}$ coincides with
$2^{n+1}(Z^* \wedge \omega_{j+1}^{p,q+1;n+1} - i(q-j) I^* \wedge Z^* \wedge
\overline{Z}^* \wedge \omega_{j+1}^{p,q;n})$.
Now if we multiply the first summand by $\alpha_j = \alpha \kappa_{j+1}^{p,q+1;n+1}$
and sum over all $j \in \mathbb{Z}$ the result vanishes due to Proposition
\ref{prop_relation_omega}. Therefore, up to an overall constant we can replace in
\eqref{eq:wedge_product_Kaehler_Poisson_1} the form
$Z^* \wedge \tilde{\pi}_{j+1}^{p,q+1;n+1}$ with
$-i(q-j) I^* \wedge Z^* \wedge \overline{Z}^* \wedge
\omega_{j+1}^{p,q;n}$.
Proceeding similarly for the other summands in
\eqref{eq:wedge_product_Kaehler_Poisson_1} we can write $\omega_M \wedge \phi_{p,q}$
up to an overall constant as
$\sum_j a_j I^* \wedge Z^* \wedge \overline{Z}^* \wedge \omega_{j+1}^{p,q;n}$ with
coefficients
\begin{align*}
  a_j := -\alpha_j(q-j) - \beta_j(p-j) + (j+2)\gamma_{j+1} + \delta_j. 
\end{align*}
But a direct computation yields $a_j = 0$ for all $j$ and therefore
$\omega_M \wedge \phi_{p,q} = 0$.
  
Next, we determine the image of $\phi_{p,q}$ under $\partial_K$. Using the formulae for the partial derivatives of the basic invariant forms from Section \ref{3.4} a direct computation yields 
\begin{align}\label{eq:K_derivative_t_pi}
 \partial_K \tilde{\pi}_j^{p,q;k} = (k-(p+q)+j)(Z^* \wedge \tilde{\pi}_j^{p,q;k} - I^* \wedge \tilde{\pi}_j^{p+1,q;k}).
\end{align}
Therefore, we directly compute that both
$\partial_K(Z^* \wedge \tilde{\pi}_j^{p-1,q;n})$ and
$\partial_K(I^* \wedge \tilde{\pi}_j^{p,q;n})$ coincide with
$(n+1-(p+q)+j)(\partial_K I^*) \wedge \tilde{\pi}_j^{p,q;n}$, whereas
$$
2i\partial_K(I^* \wedge Z^* \wedge \overline{Z}^* \wedge
\tilde{\pi}_j^{p-1,q-1;n-1})
$$
equals $-(\partial_K I^*) \wedge \tilde{\pi}_{j+1}^{p,q;n}$. For the Poisson kernel,
this means that the coefficient of $(\partial_K I^*)\wedge \tilde{\pi}_j^{p,q;n}$ in
$\partial_K\phi_{p,q}$ equals $(n+1-(p+q)+j)(\alpha_j + \gamma_j) - \delta_{j-1}$,
which coincides with $-\beta(n+1)\kappa_{j+1}^{p+1,q;n}$ by a direct
computation. Hence we get
\begin{align*}
 \partial_K\phi_{p,q} = \beta \textstyle\sum_j  \left(\kappa_{j+1}^{p+1,q;n+1} \partial_K\left(\overline{Z}^* \wedge \tilde{\pi}_j^{p,q-1;n}\right) -(n+1)\kappa_{j+1}^{p+1,q;n} (\partial_K I^*) \wedge \tilde{\pi}_j^{p,q;n}\right).
\end{align*}
Our aim is to compare this with the $P$-derivative of $\phi_{p,q}$ for $p+q > n+1$. For this, recall from the proof of Theorem \ref{thm_main2} that $d_P(Z^* \wedge \tilde{\pi}_j^{p-1,q;n})$, $d_P(\overline{Z}^* \wedge \tilde{\pi}_j^{p,q-1;n})$ as well as $d_P(I^* \wedge \tilde{\pi}_j^{p,q;n})$ all coincide with $\tilde{\pi}_j^{p,q;n+1}$. Furthermore, for the $P$-derivative of $I^* \wedge Z^* \wedge \overline{Z}^* \wedge \tilde{\pi}_{j}^{p-1,q-1;n-1}$ we first apply the formulae for the derivatives of the basic invariant forms from Section \ref{4.4} and then \eqref{eq:K_derivative_t_pi} to write it as a linear combination of the elements $\overline{Z}^* \wedge \partial_K\tilde{\pi}_j^{p-1,q-1;n}$ and $(\partial_K I^*) \wedge \tilde{\pi}_j^{p-1,q;n}$. Now if we define $\tilde{\gamma}_j := \alpha_j + \beta_j + \gamma_j$ as in the proof of Theorem \ref{thm_main2} a direct computation shows that $\tfrac{\delta_j}{(n-(p+q)+j+2)} = -\tilde{\gamma}_{j+1}$. All in all, we obtain
\begin{align}
 d_P\phi_{p,q} = \textstyle\sum_j \tilde{\gamma}_j \tilde{\pi}_j^{p,q;n+1} + 2i \tilde{\gamma}_{j+1} \overline{Z}^* \wedge \partial_K\tilde{\pi}_{j+1}^{p-1,q-1;n} - 2i \delta_j (\partial_K I^*) \wedge \tilde{\pi}_j^{p-1,q;n}.
\end{align}
Next, we shift the index in the first summand and use that $\partial_K\overline{Z}^* = -\overline{\partial}_KZ^*$ to rewrite the first two summands as $\tilde{\gamma}_{j+1} \partial_K(\overline{Z}^* \wedge \tilde{\pi}_{j+1}^{p-1,q-1;n})$. Inserting the explicit expressions for $\tilde{\gamma}_j$ and $\delta_j$ from Theorem \ref{thm_main2} a direct computation yields
\begin{align}\label{eq:P_derivative_t_pi}
 d_P\phi_{p,q} = -2ic_{\alpha,\beta}\textstyle\sum_j \kappa^{p,q;n+1}_{j+1}  \partial_K\left(\overline{Z}^* \wedge\tilde{\pi}_{j+1}^{p-1,q-1;n}\right) - (n+1) \kappa_{j+1}^{p,q;n} (\partial_K I^*) \wedge \tilde{\pi}_j^{p-1,q;n}
\end{align}
with $c_{\alpha,\beta}= \tfrac{\alpha(n+1-q) + \beta(n+1-p)}{p+q-n}$. In particular, for $\alpha = 0$ we compare \eqref{eq:K_derivative_t_pi} with \eqref{eq:P_derivative_t_pi} and get
\begin{align}\label{eq:relation_t_pi_derivatives}
-2i(n+1-p) \partial_K \phi_{p-1,q}^{0,\beta} = (p+q-n) d_P\phi_{p,q}^{0, \beta}.
\end{align}
Furthermore, using $\overline{\phi_{p,q}^{\alpha,\beta}} = \phi_{q,p}^{\overline{\beta}, \overline{\alpha}}$ we obtain formulae for the operator $\overline{\partial}_K$ similar to \eqref{eq:K_derivative_t_pi} and \eqref{eq:relation_t_pi_derivatives}. 

Finally, we determine the image of $\phi_{p,q}$ under $\partial_K^*$. First, note that by \eqref{tpi-def} we can write $Z^* \wedge \tilde{\pi}_j^{p-1,q;n}$ as the sum of $2^n(Z^* \wedge \omega_j^{p-1,q;n})$ and a multiple of  $I^* \wedge Z^* \wedge \overline{Z}^* \wedge \omega_j^{p-2,q-1;n-1}$. Now by Proposition \ref{prop_formula_K_codifferential} the latter is contained in the kernel of $\partial_K^*$, implying that $\partial_K^*(Z^* \wedge \tilde{\pi}_j^{p-1,q;n})$ coincides with $2^n \partial_K^*(Z^* \wedge \omega_j^{p-1,q;n})$. In the same way, we can argue for the other summands of $\phi_{p,q}$, obtaining
\begin{align}\label{eq:tpi_K_codifferential_1}
 \partial_K^*\phi_{p,q} = 2^n\textstyle\sum_j \partial_K^*\left(\alpha_j Z^*\wedge \omega_j^{p-1,q;n} + \beta_j \overline{Z}^* \wedge \omega_j^{p,q-1;n} + \gamma_j I^* \wedge \omega_j^{p,q;n}\right).
\end{align}
In this form we can directly apply the formulae from Proposition \ref{prop_formula_K_codifferential} to \eqref{eq:tpi_K_codifferential_1}. After shifting the indices appropriately we have
\begin{align}
\begin{aligned}
 \partial_K^*\phi_{p,q} = 2^n\textstyle\sum_j &\alpha_j \omega_j^{p-1,q;n} + 2i b_j Z^* \wedge\overline{Z}^* \wedge \omega_j^{p-2,q-1;n-1} \\
 &+ 2i c_j I^* \wedge Z^* \wedge \omega_j^{p-2,q;n-1} + 2i d_j I^* \wedge \overline{Z}^* \wedge \omega_j^{p-1,q-1;n-1},
\end{aligned}
 \end{align}
where the coefficients are given by 
\begin{align*}
 b_j &:= (j+1)(n-(p+q)+j+2) \alpha_{j+1} - (p-j-1)(q-j)\alpha_j \\
 c_j &:= (j+1)(n-(p+q)+j+2) \alpha_{j+1} - (p-j-1)(q-j+1) \alpha_j \\
 d_j &:= (j+1)(n-(p+q)j+2) (\beta_{j+1} + \gamma_{j+1}) + (p-j)(q-j)(\beta_j + \gamma_j)
\end{align*}
Inserting the definitions of the coefficients $\alpha_j$, $\beta_j$ and $\gamma_j$ we can exploit Lemma \ref{lem_recursion_kappa} to simplify these expressions. In particular, a direct computation shows that $d_j$ does not depend on the parameter $\beta$. Explicitly, a direct computation shows that
\begin{align}\label{eq:tpi_K_codifferential}
 \begin{aligned}
  \partial_K^*\phi_{p,q} := 2\alpha\sum_j &\left(\kappa_{j+1}^{p,q+1;n+1} \omega_j^{p-1,q;n} - i(n+1)\kappa_{j+2}^{p,q+1;n} Z^* \wedge \overline{Z}^* \wedge \omega_j^{p-2,q-1;n-1}\right. \\
  &- i(q+2) \kappa_{j+2}^{p,q+2;n+1} I^* \wedge Z^* \wedge \omega_j^{p-2,q;n-1} \\
  &+ \left. i(p+1) \kappa_{j+2}^{p+1,q+1;n+1} I^* \wedge \overline{Z}^*  \wedge \omega_j^{p-1,q-1;n-1}\right).
 \end{aligned}
\end{align}
In particular, we deduce that
$ \partial_K^*\phi_{p,q}^{\alpha, \beta} = \partial_K^*\phi_{p,q}^{\alpha, 0}$, and
this vanishes if and only if $\alpha = 0$. Furthermore, since
$\overline{\phi_{p,q}^{\alpha, \beta}} =
\phi_{q,p}^{\overline{\beta},\overline{\alpha}}$
we easily obtain an analogous formula for $\overline{\partial}_K^*\phi_{p,q}$. In
particular, applying the complex conjugation to \eqref{eq:tpi_K_codifferential}
directly we deduce that
\begin{align}\label{eq:relation_t_pi_codifferential}
 \partial_K^* \phi_{p+1,q}^{\alpha, 0} = \overline{\partial}_K^* \phi_{p,q+1}^{0, \alpha}
\end{align}
for all $p+q \ge n$.
  \end{proof}

\subsection{Real versions of Poisson transforms}
In the final section we construct for all $0\le k \le 2n$ Poisson transforms $\Phi_k \colon \Omega^k(G/P) \to \Omega^k(G/K)$ which factor to the Rumin complex. Their associated kernels will be defined as special linear combinations of the operators from Theorems \ref{thm_main1} and \ref{thm_main2}, respectively. Explicitly, recall from Proposition \ref{prop_properties_below_half} that for $p+q \le n$ the image of $\Phi_{p,q}$ consist of coclosed differential forms, whereas from Proposition \ref{prop_properties_above_half} that for $p+q > n$ the the $2$-parameter family $\Phi_{p,q}^{\alpha,\beta}$ satisfies relations between its compositions with the partial derivatives $\partial$ and $\overline{\partial}$ on $G/K$ and the exterior derivative on $G/P$.

Therefore, we will design the real operators $\Phi_k$ so that their image consists of coclosed differential forms and that they satisfy a commutation relation with respect to the exterior derivatives on $G/K$ and $G/P$. Explicitly, for all $0 \le k \le n$ we define the real differential form $\varphi_k := \sum_{p+q = k} \lambda_{p,q} \phi_{p,q}$ with coefficients
\begin{align*}
 \lambda_{p,q} := 2^{-k} i^{q-p} (n+1-p)!(n+1-q)!
\end{align*}
and for all $n+1 \le k \le 2n$ the form $\phi_{k} := \sum_{p+q=k} \phi_{p,q}^{\alpha_{p,q}, \beta_{p,q}}$ with parameters
\begin{align*}
 \alpha_{p,q} &:= 2^{-k-1} i^{q-p} (n+1-p)!(n-q)!, & \beta_{p,q} &:= 2^{-k-1} i^{q-p} (n-p)!(n-q+1)!.
\end{align*}
These Poisson kernels satisfy $\overline{\phi_k} = \phi_k$ for all $0 \le k \le 2n$ and therefore induce Poisson transforms $\Phi_k \colon \Omega^k(G/P) \to \Omega^k(G/K)$
which are $G$-equivariant and factor to the Rumin complex.

\begin{thm}
 For all $0 \le k \le 2n$ let $\underline{\Phi}_k \colon \Gamma(\mathcal{H}_k(G/P)) \to \Omega^k(G/K)$ be the $G$-equivariant linear operators induced by $\Phi_k$. Then these operators satisfy:
 \begin{enumerate}
  \item[(i)] The image of $\underline{\Phi}_k$ is contained in the space of harmonic and coclosed differential forms, which are primitive for $0 \le k \le n$ and coprimitive for $n+1 \le k \le 2n+1$.
  \item[(ii)] If $D_k \colon \Gamma(\mathcal{H}_k) \to \Gamma(\mathcal{H}_{k+1})$ denotes the $k$th BGG-operator, then
\begin{align*}
  d \circ \underline{\Phi}_k = c_k \ \underline{\Phi}_{k+1} \circ D_k
 \end{align*}
 for all $0 \le k \le 2n$, where the coefficient $c_k$ is defined by
 \begin{align*}
  c_k = \begin{cases} n-k+1 & k \le n \\
         n-k-1 & k \ge n+1
        \end{cases}
 \end{align*}

 \end{enumerate}
\end{thm}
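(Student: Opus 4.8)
The plan is to prove the two assertions separately, in each case reducing to the per-type results of Propositions \ref{prop_properties_below_half} and \ref{prop_properties_above_half} together with the numerology built into the coefficients $\lambda_{p,q}$, $\alpha_{p,q}$, $\beta_{p,q}$. For part (i), harmonicity is immediate by linearity: each summand $\Phi_{p,q}$ (for $p+q\le n$) and $\Phi^{\alpha,\beta}_{p,q}$ (for $p+q>n$) has harmonic values by Theorems \ref{thm_main1} and \ref{thm_main2}, and $\Phi_k$ is a finite linear combination of these. The same linearity argument, applied to the statements $\Cal L^*\Phi_{p,q}=0$ (Proposition \ref{prop_properties_below_half}) and $\Cal L\Phi^{\alpha,\beta}_{p,q}=0$ (Proposition \ref{prop_properties_above_half}), gives primitivity for $k\le n$ and coprimitivity for $k\ge n+1$, and for $k\le n$ coclosedness holds termwise by Proposition \ref{prop_properties_below_half}.

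The only genuine computation in part (i) is coclosedness for $k\ge n+1$. Here I would split $\delta\Phi_k=(\partial^*+\overline{\partial}^*)\Phi_k$ into $(p,q)$-types; since $\partial^*$ lowers $p$ and $\overline{\partial}^*$ lowers $q$, the $(a,b)$-component (with $a+b=k-1$) is $\partial^*\Phi^{\alpha_{a+1,b},\beta_{a+1,b}}_{a+1,b}+\overline{\partial}^*\Phi^{\alpha_{a,b+1},\beta_{a,b+1}}_{a,b+1}$. By Proposition \ref{prop_properties_above_half} the first term depends only on $\alpha_{a+1,b}$ and the second only on $\beta_{a,b+1}$, and the relation $\partial^*\circ\Phi^{\alpha,*}_{a+1,b}=\overline{\partial}^*\circ\Phi^{*,\alpha}_{a,b+1}$ rewrites the component as $\overline{\partial}^*\Phi^{*,\alpha_{a+1,b}+\beta_{a,b+1}}_{a,b+1}$. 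From the explicit formulas one checks $\alpha_{a+1,b}+\beta_{a,b+1}=2^{-(a+b+2)}(n-a)!\,(n-b)!\,(i^{b-a-1}+i^{b-a+1})=0$, because $i^{b-a-1}(1+i^2)=0$; hence $\delta\Phi_k=0$ (the out-of-range indices contributing trivially).

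For part (ii) I would first reduce to a statement about kernels. By Corollary \ref{cor2.4}, for $\tau\in\Ga(\Cal H_k)$ and any $\alpha\in\Ga(\ker(\partial^*))$ with $\pi_H\o\alpha=\tau$ one has $\underline{\Phi}_k(\tau)=\Phi_k(\alpha)$ and $\underline{\Phi}_{k+1}(D_k\tau)=\Phi_{k+1}(d\alpha)$, so the claim is equivalent to $d\Phi_k(\alpha)=c_k\Phi_{k+1}(d\alpha)$ for all $\alpha\in\Ga(\ker(\partial^*))$. By Proposition \ref{prop3.2}(i) the left side is the transform with kernel $d_K\varphi_k$, and by Proposition \ref{prop3.2}(iii) (with $\ell=k+1$, so the sign is $(-1)^{(k+1)-(k+1)}=1$) the right side is the transform with kernel $d_P\varphi_{k+1}$. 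Since Proposition \ref{prop3.2}(iii) also shows that any kernel in $\im(\partial^*_P)$ induces a transform annihilating $\Ga(\ker(\partial^*))$, it suffices to prove the purely algebraic statement
\[
 d_K\varphi_k - c_k\, d_P\varphi_{k+1}\in\im(\partial^*_P)
\]
for $M$-invariant forms on $G/M$. For $k\ge n+1$ this is essentially packaged in Proposition \ref{prop_properties_above_half}: decomposing $d_K\varphi_k$ by type and using the relations there to replace each $\partial_K$- and $\overline{\partial}_K$-contribution in the $(a,b)$-component by a multiple of $d_P\phi^{*}_{a,b}$, the identity reduces, via linearity of $\Phi^{\alpha,\beta}_{p,q}$ in $(\alpha,\beta)$, to two scalar relations per type. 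These I would verify by inserting the explicit values of $\alpha,\beta$ and simplifying ratios of $\kappa$'s with Lemma \ref{lem_recursion_kappa}; the factorials and powers of $i$ are arranged so that precisely $c_k=n-k-1$ emerges.

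The remaining and most delicate part is $k\le n$, and above all the transitional degree $k=n$. For $k<n$ both $\varphi_k$ and $\varphi_{k+1}$ come from the low-degree family, for which the analogue of Proposition \ref{prop_properties_above_half} is not recorded; here I would compute $d_K\phi_{p,q}$ and $d_P\phi_{p,q}$ directly from $\phi_{p,q}=\sum_j\kappa_j^{p,q;n+1}I^*\wedge\pi_j^{p,q;n}$ and the derivative table of Section \ref{3.4}, exploiting $d_P(I^*\wedge\pi_j^{p,q;k})=\pi_j^{p,q;k+1}$ and $I^*\wedge\pi_j^{p,q;k}=2^kI^*\wedge\omega_j^{p,q;k}$ from the proof of Theorem \ref{thm_main1}, to establish the missing commutation relations and the value $c_k=n-k+1$. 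I expect the genuine obstacle to be the boundary case $k=n$: there $\varphi_n$ is assembled from low-degree kernels while $\varphi_{n+1}$ is assembled from the two-parameter high-degree kernels, so verifying $d_K\varphi_n\equiv c_n\, d_P\varphi_{n+1}\pmod{\im(\partial^*_P)}$ with $c_n=1$ forces the two different coefficient conventions $\lambda_{p,q}$ and $(\alpha_{p,q},\beta_{p,q})$ to be mutually compatible across the middle degree. Checking this compatibility of normalizations—rather than any single computation internal to one family—is where the real work lies.
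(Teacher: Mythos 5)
Your strategy coincides with the paper's own proof in every structural respect. Part (i) is carried out correctly and completely: the linearity argument for harmonicity, primitivity and coprimitivity is the paper's, and your cancellation $\alpha_{a+1,b}+\beta_{a,b+1}=2^{-(a+b+2)}(n-a)!\,(n-b)!\,i^{b-a-1}(1+i^2)=0$ is literally the paper's computation $\gamma_{p,q}:=\alpha_{p,q}+\beta_{p-1,q+1}=0$ after the substitution $(p,q)=(a+1,b)$ (you phrase it through $\overline{\partial}^*$ where the paper uses $\partial_K^*$ via \eqref{eq:relation_t_pi_codifferential}, which is immaterial). Your reduction of (ii) to a kernel identity, including the sign bookkeeping $(-1)^{(k+1)-(k+1)}=1$ in Proposition \ref{prop3.2}(iii), is also exactly the paper's; the one genuine deviation is that you only ask for $d_K\varphi_k-c_k\,d_P\varphi_{k+1}\in\im(\partial^*_P)$. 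That weakening is legitimate (a kernel of the form $\partial^*_P\psi$ with $\psi$ invariant induces, up to sign, a transform of the shape $\Psi\circ\partial^*$, which annihilates $\Gamma(\ker(\partial^*))$), but it turns out to buy nothing: the paper verifies the identity $d_K\phi_k=c_k\,d_P\phi_{k+1}$ on the nose, with no cushion modulo $\im(\partial^*_P)$ needed in any of the three regimes.

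The genuine shortfall is that everything in (ii) after the reduction is announced rather than performed, and that is precisely where the paper's proof does its work. For $k<n$ one must actually derive $d_K\phi_{p,q}=(n+1-(p+q))(Z^*+\overline{Z}^*)\wedge\phi_{p,q}$ and $d_P\phi_{p,q}=2i\bigl((n+2-p)Z^*\wedge\phi_{p-1,q}-(n+2-q)\overline{Z}^*\wedge\phi_{p,q-1}\bigr)$ (the second requires killing $\sum_j\kappa_j^{p,q;n+1}\omega_j^{p,q;n+1}$ via Proposition \ref{prop_relation_omega}) and combine them with the recursions $\lambda_{p,q}=2i\lambda_{p+1,q}(n+1-p)=-2i\lambda_{p,q+1}(n+1-q)$; for $k>n$ your sketch is viable and is what the paper does, using \eqref{eq:relation_t_pi_derivatives} together with $\alpha_{p,q-1}=-2i(n+1-q)\alpha_{p,q}$ and $\beta_{p-1,q}=2i(n+1-p)\beta_{p,q}$ to extract $c_k=n-k-1$. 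Most importantly, the transitional case $k=n$, which you correctly diagnose as the crux, is left entirely undone: the paper must expand $d_P\phi_{p,q}$ for $p+q=n+1$ in terms of the $\pi_j$'s, show the coefficient $\epsilon_j=(j+1)\tilde{\gamma}_{j+1}+\delta_j$ of $Z^*\wedge\overline{Z}^*\wedge\pi_{j-1}^{p-1,q-1;n}$ vanishes, and then match the surviving terms against $d_K\phi_n$ using the identity $\kappa_{j+1}^{p+1,q+1;n+1}=\kappa_j^{p,q;n+1}$ for $p+q=n$ and $\lambda_{p,q}=2^{-n}i^{q-p}(p+1)!(q+1)!$ — this is exactly the cross-family compatibility of normalizations you flag. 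Flagging it is not proving it: the stated values of $c_k$, and in particular $c_n=1$, rest entirely on these unexecuted verifications, so as it stands the proposal is a correct and well-aimed plan rather than a proof.
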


\begin{proof}
  (i) The operators $\underline{\Phi}_k$ are defined by
  $\underline{\Phi}_k(\pi(\alpha)) = \Phi_k(\alpha)$ for all
  $\alpha \in \Gamma(\ker(\partial^*))$, where
  $\pi \colon \Gamma(\ker(\partial^*))\to\Gamma(\mathcal{H}_k)$ denotes the canonical
  projection. Therefore, it suffices to prove the claimed properties of the image of
  $\Phi_k$. But from Proposition \ref{prop_properties_below_half} we already know
  that for $p+q \le n$ the image of the operators $\Phi_{p,q}$ consists of harmonic,
  coclosed and primitive differential forms, so by linearity the same is true for the
  images of $\Phi_k$ for $0 \le k \le n$. Moreover, for $p+q > n$ we have shown in
  Proposition \ref{prop_properties_above_half} that the image of $\Phi_{p,q}$ is
  contained in the space of harmonic and coprimitive differential forms, and thus the
  same holds for $\Phi_{k}$ for $n+1 \le k \le 2n$. Hence it remains to show that
  $\delta \circ \Phi_k = 0$ for all $n+1 \le k \le 2n+1$, which by Proposition
  \ref{prop3.2} amounts to proving that $\phi_k$ is $K$-coclosed. But we have seen in
  Proposition \ref{prop_properties_above_half} that
  $\partial_K\phi^{\alpha,\beta}_{p,q} = \partial_K\phi^{\alpha, 0}_{p,q}$ for all
  parameter $\alpha$, $\beta \in \mathbb{C}$, and similarly for
  $\overline{\partial}_K^*$. Therefore, using formula
  \eqref{eq:relation_t_pi_codifferential} we deduce that
\begin{align*}
 \delta_K\phi_k = \sum_{p+q=k} \partial_K^*\phi_{p,q}^{\alpha_{p,q},0} + \overline{\partial}_K^*\phi_{p,q}^{0,\beta_{p,q}} = \sum_{p+q=k} \partial_K^* \phi_{p,q}^{\gamma_{p,q},0}
\end{align*}
with parameter $\gamma_{p,q} := \alpha_{p,q} + \beta_{p-1,q+1}$. But a direct computation shows that $\gamma_{p,q} = 0$ and hence $\delta_K\phi_k = 0$.
 
(ii) Using the definition of $\underline{\Phi}_k$ and Proposition \ref{prop2.4} we
need to show that
 \begin{align}\label{eq:relation_real_partial_derivatives}
  d_K\phi_k = c_k d_P\phi_{k+1}
 \end{align}
for all $0 \le k \le 2n$. 

First, for $p+q \le n$ the Poisson kernel $\phi_{p,q}$ was given by
$ I^* \wedge \sum_{j}\kappa_j^{p,q;n+1} \pi_j^{p,q;n}$ with the forms $\pi_j^{p,q;n}$
being defined in \eqref{pi-def1}. Using the formulae for the derivatives of the basic
invariant forms from Section \ref{3.4} a direct computation shows that
$d_K(I^* \wedge \pi_j^{p,q;k}) = (k+1-(p+q)) (Z^* + \overline{Z}^*) \wedge
\pi_j^{p,q;k}$ and therefore
\begin{align}\label{eq:K_derivative_pi}
 d_K\phi_{p,q} = (n+1-(p+q)) (Z^* + \overline{Z}^*) \wedge \phi_{p,q}.
\end{align}
Similarly, the definition of $\pi_j^{p,q;n}$ readily implies that
$d_P(I^* \wedge \pi_j^{p,q;n}) = \pi_j^{p,q;n+1}$, which we write as a linear
combination of $\omega_j^{p,q;n+1}$, $I^* \wedge Z^* \wedge \omega_j^{p,q;n}$ and
$I^* \wedge \overline{Z}^* \wedge \omega_j^{p,q;n}$. Inserting this into
$d_P\phi_{p,q}$ the sum $\sum_j\kappa_j^{p,q;n+1}\omega_j^{p,q;n+1}$ vanishes due to
Proposition \ref{prop_relation_omega}. For the other summands, a direct computation
using the definition of the coefficients $\kappa$ implies that
\begin{align}\label{eq:P_derivative_pi}
 d_P\phi_{p,q} = 2i \left((n+2-p) Z^* \wedge \phi_{p-1,q} - (n+2-q) \overline{Z}^* \wedge \phi_{p,q-1}\right).
\end{align}
In order to compute $d_K\phi_k$ in the case $k < n$ we first use linearity of the
$K$-differential and apply \eqref{eq:K_derivative_pi} to each of the summands of
$\phi_k$. Next, we apply the relation $\lambda_{p,q} = 2i\lambda_{p+1,q}(n+1-p)$ to
the coefficients of the summands $Z^* \wedge \phi_{p,q}$ and
$\lambda_{p,q} = -2i\lambda_{p,q+1}(n+1-q)$ to the coefficients of
$\overline{Z}^* \wedge \phi_{p,q}$. Shifting the summation indices of the resulting
expression to $p+q = k+1$ and comparing this to \eqref{eq:P_derivative_pi} yields
\eqref{eq:relation_real_partial_derivatives} for all $0 \le k < n$.

Next, if $p+q > n$ we have seen in the proof of Proposition
\ref{prop_properties_above_half} that
$\partial_K\phi_{p,q}^{\alpha,\beta}= \partial_K\phi_{p,q}^{0, \beta}$ and
$\overline{\partial}_K \phi_{p,q}^{\alpha,\beta} = \overline{\partial}_K
\phi_{p,q}^{\alpha, 0}$
for all $\alpha$, $\beta \in \mathbb{C}$. Since $p+q > n$ we know that both $p$ and
$q$ are positive, so linearity of the $K$-differential and shifting the indices
appropriately we obtain
\begin{align}\label{eq:relation_real_partial_derivatives_above_half}
d_K\phi_k = \sum_{p+q=k} \overline{\partial_K} \phi_{p,q}^{\alpha_{p,q}, 0}
  + \partial_K \phi_{p,q}^{0,\beta_{p,q}} = \sum_{p+q = k+1}
  \overline{\partial}_K\phi_{p,q-1}^{\alpha_{p,q-1},0} + \partial_K \phi_{p-1,q}^{0,
  \beta_{p-1,q}}. 
\end{align}
By definition, the parameters of $\phi_k$ satisfy
$\alpha_{p,q-1} = -2i(n+1-q)\alpha_{p,q}$ as well as
$\beta_{p-1,q} = 2i(n+1-p)\beta_{p,q}$. Inserting this into
\eqref{eq:relation_real_partial_derivatives_above_half} and afterwards applying
formula \eqref{eq:relation_t_pi_derivatives} for the relation between the partial
derivatives of $\phi_{p,q}^{\alpha,\beta}$ a direct computation yields
\eqref{eq:relation_real_partial_derivatives} for all $n < k \le 2n$.

Finally, we need to show the formula in the special case $k = n$. In order to do so,
using the definition of the forms $\tilde{\pi}_j$ a similar computation as in the
proof of Proposition \ref{prop_properties_above_half} show that the derivative
$d_P\phi_{p,q}$ for $p+q = n+1$ can be written as
\begin{align}\label{eq:P_derivative_half_dimension}
d_P\phi_{p,q} = \sum_j \tilde{\gamma}_j \tilde{\pi}_j^{p,q;n+1} + 2i \delta_j d_P(I^* \wedge Z^* \wedge \overline{Z}^*) \wedge \tilde{\pi}_j^{p-1,q-1;n-1},
\end{align}
where the coefficients $\tilde{\gamma}_j$ and $\delta_j$ are given by
$\tilde{\gamma}_j = (\alpha p + \beta q) \kappa_j^{p,q;n+1}$ and
$\delta_j = -(n+1)(\alpha p + \beta q) \kappa_{j+1}^{p,q;n}$. Next, we expand the
form $\tilde{\pi}_j^{p,q;n+1}$ in terms of wedge products of the $\omega$'s with
invariant $1$-forms. Inserting this into \eqref{eq:P_derivative_half_dimension} we
obtain the summand $\sum_j \tilde{\gamma}_j\omega_j^{p,q;n+1}$, which is trivial due
to Proposition \ref{prop_relation_omega}. Therefore, by definition of the forms
$\pi_j^{p,q;k}$ we can replace $\tilde{\pi}_j^{p,q;n+1}$ in
\eqref{eq:P_derivative_half_dimension} by the expression
\begin{align*}
2ij Z^* \wedge \overline{Z}^* \wedge \pi_{j-1}^{p-1,q-1;n} + 2i(p-j) Z^* \wedge I^* \wedge \pi_j^{p-1,q;n} - 2i(q-j) \overline{Z}^* \wedge I^* \wedge \pi_j^{p,q-1;n},
\end{align*}
and expanding the other summand as well we deduce that
\begin{align}
 d_P\phi_{p,q} = 2i\sum_j \epsilon_j Z^* \wedge \overline{Z}^* \wedge \pi_{j-1}^{p-1,q-1;n} + \zeta_j Z^* \wedge I^* \wedge \pi_j^{p-1,q;n} - \eta_j \overline{Z}^* \wedge I^* \wedge \pi_j^{p,q-1;n},
\end{align}
with coefficients $\epsilon_j = (j+1)\tilde{\gamma}_{j+1} + \delta_j$, $\zeta_j = (p-j)\tilde{\gamma}_j - \delta_j $ and $\eta_j = (q-j)\tilde{\gamma}_j - \delta_j$. Inserting the definitions we obtain by a direct computation that $\epsilon_j = 0$, whereas $\zeta_j = (\alpha p+ \beta q)(q+1) \kappa_{j+1}^{p,q+1;n+1}$ and $\eta_j = (\alpha p + \beta q) (p+1) \kappa_{j+1}^{p+1,q;n+1}$. 

In order to determine the $P$-derivative of $\phi_{n+1}$ we use linearity of $d_P$ and apply the formula \eqref{eq:P_derivative_half_dimension}. Next, we shift the  summation to all $p,q$ with $p+q = n$, so that we can write $d_P\phi_{n+1}$ as a linear combination of $Z^* \wedge I^* \wedge \pi_j^{p,q;n}$ and $\overline{Z}^* \wedge I* \wedge \pi_j^{p,q;n}$. Inserting the definition of the parameter $\alpha$ and $\beta$ and using that $\lambda_{p,q} = 2^{-n} i^{q-p}(p+1)!(q+1)!$ for $p+q = n$ a direct computation yields
\begin{align*}
 d_P\phi_{n+1} &= 2i\sum_{p+q=n+1} \sum_j \zeta_j Z^* \wedge I^* \wedge \pi_j^{p-1,q;n} - \eta_j \overline{Z}^* \wedge I^* \wedge \omega_j^{p,q-1;n} \\
 &= \sum_{p+q=n} \lambda_{p,q}\kappa_{j+1}^{p+1,q+1;n+1} (Z^* + \overline{Z}^*)\wedge I^* \wedge \pi_j^{p,q;n}.
\end{align*}
Finally, since $p+q = n$ we conclude that $\kappa_{j+1}^{p+1,q+1;n+1} = \kappa_j^{p,q;n+1}$ and thus
\begin{align*}
 d_P\phi_{n+1} = \sum_{p+q = n} \lambda_{p,q} (Z^* + \overline{Z^*})\wedge \phi_{p,q},
\end{align*}
which coincides with $d_K\phi_n$ due to \eqref{eq:K_derivative_pi}. 
\end{proof}

\appendix
\section{Some technical computations} 
In this appendix we derive explicit formulae for the image of the Poisson kernels
$\omega_j^{p,q;k}$ defined in Section \ref{4.2} under the $K$-Hodge star $\ast_K$,
the $K$-codifferential $\delta_K$ and the map $\Cal L_K^*$ corresponding to the
adjoint of the Lefschetz map on $G/K$.

\subsection{The $K$-Hodge star}
Recall from Section \ref{4.2} that for all positive integers $p$, $q$ and $k$ with
$0 \le p, q, k-p, k-q \le n$, the $M$-invariant elements $\omega_j^{p,q;k}$ are
defined by
\begin{align*}
 \omega_j^{p,q;k} = \omega_{2,0}^{j} \wedge \omega_{1,1}^{p-j} \wedge \overline{\omega_{1,1}}^{q-j} \wedge \omega_{0,2}^{k-(p+q)+j}.
\end{align*}
In order to obtain formulae for $\delta_K$ and $\Cal L_K^*$ it will be necessary to
write expressions of the form $\omega \wedge \ast_K\omega_j^{p,q;k}$ again as an
image of $\ast_K$, where $\omega$ is any of the $M$-invariant elements in
$\Lambda^* (\frak g/\frak m)^*$ of degree $2$.

We will start by identifying the pullback $\omega_K$ of the K\"{a}hler form on $G/K$
along the canonical projection $\pi_K \colon G/M \to G/K$. By construction this is a
real and non-degenerate $M$-invariant form on $(\frak g/\frak m)_{\Bbb C}$ of degree
$(2,0)$ and $K$-type $(1,1)$ and therefore given by a multiple of
$iZ^* \wedge \overline{Z}^* + \omega_{2,0}$. Furthermore, using the relation
$\omega_K(J\xi, \eta) = g_K(\xi, \eta)$, where $g_K$ is the pullback of the
$K$-invariant Hermitian inner product on $\frak g/ \frak k$, we deduce that
\begin{align}\label{eq:pullback_Kaehler_form}
 \omega_K = \frac{1}{2}(iZ^* \wedge \overline{Z}^* + \omega_{2,0}).
\end{align}
From this description we obtain that the volume form $\vol_K := \ast_K(1)$ on the
horizontal subspace $\mathfrak{p}/\mathfrak{m}$ of $\mathfrak{g}/\mathfrak{m}$ is
given by 
\begin{align}\label{eq:volume_form}
 \vol_K = \frac{1}{2^{n+1}n!} iZ^* \wedge \overline{Z}^* \wedge \omega_{2,0}^{n}.
\end{align}

In the next step we determine a formula which expresses the $M$-invariant forms
$\ast_K\omega_j^{p,q;k}$ in terms of powers of the invariant $2$-form
$\omega_{2,0}$. In order to do so, note that $\omega_{0,2}$ is of degree $(0,2)$ and
thus commutes with the $K$-Hodge star by construction. Thus, it suffices to consider
those $M$-invariant elements $\ast_K\omega_j^{p,q;k}$ which do not contain
$\omega_{0,2}$ as a factor. By definition, this means that $j = p+q-k \ge 0$, and for
any choice of $p$, $q$ and $k$ there is precisely one such $M$-invariant form.

For simplification of the following proofs we introduce some notation. By a
\emph{holomorphic $k$-vector} ${\bf X}^{1,0}$ we mean a $k$-tuple
${\bf X}^{1,0} := (X_1^{1,0}, \dotsc, X_k^{1,0})$ whose entries are the holomorphic
parts of vectors $X_1,$ $\dotsc,$ $X_k \in \mathbb{C}^n$. Next, we denote by
$F_{\bf X}^{1,0}$ the $k$-tuple $(F_{X^{1,0}_1}, \dotsc, F_{X_k^{1,0}})$ and
abbreviate by $\iota_{F_{\bf X}^{1,0}}$ the interior product
$\iota_{F_{X_1}^{1,0}} \dotso \iota_{F_{X_k}^{1,0}}$. In a similar way we define
\emph{antiholomorphic $k$-vectors} ${\bf Y}^{0,1}$ and the corresponding tuples
$F_{\bf Y}^{0,1}$ and interior products $\iota_{F_{\bf Y}^{0,1}}$. Finally, we also
define $G_{\bf X}^{1,0}$ and $G_{\bf Y}^{0,1}$ as well as their interior products in
a similar fashion.

\begin{lemma}\label{lem_power_omega_2_0}
  Let $p$, $q$ and $k$ be integers so that $0 \le p, q, k-p, k-q \le n$. Let
  $\textbf{X}^{1,0}$ be a holomorphic $(k-p)$-vector and $\textbf{Y}^{0,1}$ an
  antiholomorphic $(k-q)$-vector, respectively. Then
 \begin{equation}\label{eq:power_omega_2_0}
   \iota_{G_{\bf X}^{1,0}}\iota_{G_{\bf Y}^{0,1}}\ast_K\omega_{p+q-k}^{p,q;k} =  \varepsilon^{p,q;k}iZ^* \wedge \overline{Z}^* \wedge \iota_{F_\textbf{X}^{1,0}}\iota_{F_\textbf{Y}^{0,1}}\omega_{2,0}^{n-(p+q)+k}, 
 \end{equation}
where the factor $\varepsilon^{p,q;k}$ is defined by
\begin{align*}
  \varepsilon^{p,q;k} = (-1)^{\frac{1}{2}(p+q)(p+q+1)-(p+q-k)}2^{p+q-(n+1)}\tfrac{(p+q-k)!(k-p)!(k-q)!}{(n-(p+q)+k)!} .
\end{align*}
\end{lemma}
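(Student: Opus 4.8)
The plan is to reduce the statement to a Hodge star computation on the horizontal space $\frak p/\frak m$, and then to its $\Bbb C^n$-factor. Throughout write $a=p+q-k$, $b=k-q$ and $c=k-p$, so that $\omega_{p+q-k}^{p,q;k}=\omega_{2,0}^a\wedge\omega_{1,1}^b\wedge\overline{\omega_{1,1}}^c$; note that $a+b+c=k$, that $b+c=2k-(p+q)$ is exactly the number of insertion operators applied on each side, and that $n-a=n-(p+q)+k$ is the power occurring on the right. First I would move the $G$-insertions inside $\ast_K$. Since $\ast_K$ acts only on the $\La^*(\frak p/\frak m)^*$-tensor factor (it is $\ast\otimes\id$ for the splitting $\La^*(\frak g/\frak m)^*_{\Bbb C}=\La^*(\frak p/\frak m)^*_{\Bbb C}\otimes\La^*(\frak k/\frak m)^*_{\Bbb C}$) and the vectors $G_X^{1,0},G_Y^{0,1}$ lie in $\frak k/\frak m$, the insertion operators commute with $\ast_K$ up to a sign that is trivial because $\dim_{\Bbb R}(G/K)=2n+2$ is even. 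Hence the left-hand side equals $\ast_K$ applied to $\iota_{G_{\textbf X}^{1,0}}\iota_{G_{\textbf Y}^{0,1}}\omega_{p+q-k}^{p,q;k}$. To evaluate the latter I would use that $\iota_{G^{0,1}}$ kills $\omega_{2,0}$ and $\overline{\omega_{1,1}}$ while $\iota_{G^{1,0}}$ kills $\omega_{2,0}$ and $\omega_{1,1}$, together with the relation $\iota_{G_X^{0,1}}\omega_{1,1}=-i\,\iota_{F_X^{0,1}}\omega_{2,0}$ and its complex conjugate $\iota_{G_X^{1,0}}\overline{\omega_{1,1}}=i\,\iota_{F_X^{1,0}}\omega_{2,0}$ from the proof of Proposition \ref{prop_relation_omega}. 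As each factor $\omega_{1,1}$ (resp.\ $\overline{\omega_{1,1}}$) carries a single antiholomorphic (resp.\ holomorphic) $G$-slot, the Leibniz rule and the vanishing of double insertions collapse the $b$ insertions from $\textbf Y$ and the $c$ insertions from $\textbf X$ onto the respective factors, yielding a purely horizontal form
$$
\iota_{G_{\textbf X}^{1,0}}\iota_{G_{\textbf Y}^{0,1}}\omega_{p+q-k}^{p,q;k}=C\;\omega_{2,0}^a\wedge\iota_{F_{\textbf Y}^{0,1}}\bigl(\omega_{2,0}^b\bigr)\wedge\iota_{F_{\textbf X}^{1,0}}\bigl(\omega_{2,0}^c\bigr),
$$
with an explicit constant $C$ collecting the powers of $i$ and the reordering signs.

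Second, I would use the $g_K$-orthogonal splitting $\frak p/\frak m=P_Z\oplus W$, where $P_Z$ is spanned by $Z,\overline Z$ and $W$ by the $F$-directions (orthogonal, since the only nonzero pairings are $g_K(Z,\overline Z)=1$ and $g_K(F_X^{1,0},F_Y^{0,1})=\tfrac12\langle X,Y\rangle$). For such a splitting the Hodge star of a form supported on $W$ is its $W$-Hodge star wedged with the unit volume of $P_Z$; by \eqref{eq:pullback_Kaehler_form} and \eqref{eq:volume_form} this unit volume is a definite multiple of $iZ^*\wedge\overline Z^*$, concretely $\vol_K=\tfrac{i}{2}Z^*\wedge\overline Z^*\wedge\vol_W$ with $\vol_W=\tfrac{1}{2^nn!}\omega_{2,0}^n$. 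This accounts for the factor $iZ^*\wedge\overline Z^*$ in the statement and reduces everything to the Hodge star identity on $W$
$$
\ast_W\Bigl(\omega_{2,0}^a\wedge\iota_{F_{\textbf Y}^{0,1}}\bigl(\omega_{2,0}^b\bigr)\wedge\iota_{F_{\textbf X}^{1,0}}\bigl(\omega_{2,0}^c\bigr)\Bigr)=\lambda\;\iota_{F_{\textbf X}^{1,0}}\iota_{F_{\textbf Y}^{0,1}}\omega_{2,0}^{\,n-a}
$$
for an explicit constant $\lambda$.

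The hard part is this last identity on $W$, which is a Lefschetz-type duality: the left-hand form is $\omega_{2,0}^a$ times the $(b,c)$-form $\iota_{F_{\textbf Y}^{0,1}}(\omega_{2,0}^b)\wedge\iota_{F_{\textbf X}^{1,0}}(\omega_{2,0}^c)$, and on the $n$-dimensional K\"ahler space $W$ wedging with $\omega_{2,0}^a$ followed by $\ast_W$ is governed by the $\frak{sl}_2$-action of the Lefschetz operator. I would establish it either by decomposing the $(b,c)$-form into primitive components and invoking Weil's formula for the Hodge star of $L^j$ applied to a primitive form, or --- more in the spirit of the surrounding computations --- by pairing both sides against the complementary insertion forms $\iota_{F_{\textbf U}^{1,0}}\iota_{F_{\textbf V}^{0,1}}\omega_{2,0}^{\,n-a}$ and reducing to the defining relation $\gamma\wedge\ast_W\beta=\langle\gamma,\beta\rangle\vol_W$, which turns the statement into a purely combinatorial matching of contractions of powers of $\omega_{2,0}$. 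Either route produces the factorial ratio $\tfrac{a!\,b!\,c!}{(n-a)!}=\tfrac{(p+q-k)!(k-p)!(k-q)!}{(n-(p+q)+k)!}$ and the power $2^{p+q-(n+1)}$ from the metric normalizations, while the Hodge sign $(-1)^{\frac12(p+q)(p+q+1)}$ together with the reordering sign $(-1)^{-(p+q-k)}$ assembles into $\varepsilon^{p,q;k}$. Tracking these constants and signs exactly, rather than the structural identity itself, is where essentially all of the work lies.
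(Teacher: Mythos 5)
Your plan is correct, but it takes a genuinely different route from the paper. The paper proves the lemma by induction on the $P$-type of $\ast_K\omega^{p,q;k}_{p+q-k}$: the base case $k=p=q$ is the direct computation of $\ast_K\omega_{2,0}^k$ from the norm $g_K(\omega_{2,0}^k,\omega_{2,0}^k)=2^{2k}\tfrac{n!k!}{(n-k)!}$ together with \eqref{eq:volume_form}, and the inductive step adjoins one factor $\overline{\omega_{1,1}}$ (or, by symmetry, $\omega_{1,1}$) at a time, trading a \emph{single} vertical insertion for a horizontal one across the Hodge star via $\iota_{G_X^{1,0}}\overline{\omega_{1,1}}=(F_X^{1,0})^\flat$ and wedge/insertion adjointness, with the constant tracked by the one-line recursion $(-1)^{p+q+1}2(k-p+1)\varepsilon^{p,q;k}=\varepsilon^{p,q+1;k+1}$; the closed form $\varepsilon^{p,q;k}$ is never obtained in one shot. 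You instead commute \emph{all} vertical insertions through $\ast_K$ at once (valid, as you say, because $\ast_K$ is $\ast\otimes\mathrm{id}$ and $\dim G/K=2n+2$ is even), collapse them onto the $\omega_{1,1}$- and $\overline{\omega_{1,1}}$-factors to produce a purely horizontal form, split off the $Z$-plane using $\vol_K=\vol_{P_Z}\wedge\vol_W$ (your normalizations agree with \eqref{eq:pullback_Kaehler_form} and \eqref{eq:volume_form}), and reduce to a single Weil-type Hodge-star identity on $W\cong\Bbb C^n$. All of these steps are sound; in particular your conjugated relation $\iota_{G_X^{1,0}}\overline{\omega_{1,1}}=i\,\iota_{F_X^{1,0}}\omega_{2,0}$ is equivalent to the paper's flat identity, since $\iota_{F_X^{1,0}}\omega_{2,0}=-i(F_X^{1,0})^\flat$. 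That last observation also suggests a cleaner way to finish than either of your two proposed routes: the one-forms $\iota_{F}\omega_{2,0}$ are multiples of metric duals, so by the same wedge/insertion adjointness $\ast_W\bigl(\omega_{2,0}^a\wedge\mu\bigr)$ becomes the corresponding insertions applied to $\ast_W\omega_{2,0}^a$, which is $\tfrac{a!}{(n-a)!}$ times a power of $\omega_{2,0}$ up to normalization --- no primitive decomposition and no pairing/spanning argument is needed (for your pairing variant you would additionally have to check that the proposed test forms span the complementary type). What your organization buys is conceptual transparency: the lemma is exposed as one classical Lefschetz-type identity on $\Bbb C^n$, with $\varepsilon^{p,q;k}$ assembled in closed form. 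What the paper's induction buys is that its one-step exchange relation is precisely the computational move reused later (e.g.\ in the proof of Proposition \ref{prop_formula_K_codifferential}), and the constant is verified by a recursion rather than by the simultaneous bookkeeping of factorials, powers of $2$, and Hodge signs --- which, as you concede, is where essentially all the work in your version lies and which your proposal sketches but does not carry out.
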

\begin{proof}
  We will prove this formula by induction on the $P$-type of
  $\ast_K\omega_{p+q-k}^{p,q;k}$. If the $P$-type equals $(0,0)$, then $k = p = q$,
  so there are no interior products in \eqref{eq:power_omega_2_0}. From the explicit
  description of the Hermitian product $g_K$ we compute
  $g_K(\omega_{2,0}^{k}, \omega_{2,0}^k) = 2^{2k} \frac{n! k!}{(n-k)!}$, which
  together with expression \eqref{eq:volume_form} for the volume form yields
\begin{align*}
  \ast_K \omega_k^{k,k;k} = \ast_K\omega_{2,0}^k =  2^{2k-(n+1)}\tfrac{k!}{(n-k)!}
  iZ^* \wedge \overline{Z}^* \wedge \omega_{2,0}^{n-k}. 
\end{align*}
Note that the overall factor on the right hand side coincides with
$\varepsilon^{k,k;k}$, which completes the argument for $P$-type $(0,0)$.
 
Now assume that \eqref{eq:power_omega_2_0} is shown for all $M$-invariant elements
$\ast_K\omega_{p+q-k}^{p,q;k}$ of $P$-type $(k-p,k-q)$ and consider
$\ast_K\omega_{p+q-k}^{p,q+1;k+1} = \ast_K (\omega_{p+q-k}^{p,q;k} \wedge
\overline{\omega_{1,1}})$,
which by definition is of $P$-type $(k-p+1, k-q)$. Using that
$\iota_{G_X^{1,0}}\overline{\omega_{1,1}} = (F_X^{1,0})^{\flat}$ for all
$X \in \Bbb C^n$ a direct computation yields
\begin{align*}
 \iota_{G_{X}^{1,0}}\ast_K\!\omega_{p+q-k}^{p,q+1;k+1} &= (-1)^{p+q+1} 2(k-p+1) \iota_{F_{X}^{1,0}}\ast_K\!\omega_{p+q-k}^{p,q;k}.
\end{align*}
 We apply this equation to \eqref{eq:power_omega_2_0} for $X = X_1$ and afterwards move the resulting interior product with $F_{X_1}^{1,0}$ to the left. On the remaining term we insert the induction hypothesis and finally use the relation
 \begin{align*}
 (-1)^{p+q+1}2(k-p+1)\varepsilon^{p,q;k} = \varepsilon^{p,q+1;k+1},
 \end{align*}
thereby obtaining the claim. Since our formula is symmetric in the parameters $p$ and $q$ the same line of arguments can also be applied to  $\ast_K\omega_{p+q-k}^{p+1,q;k+1}$, which is of $P$-degree $(k-p, k-q+1)$.
\end{proof}

Using Lemma \ref{lem_power_omega_2_0} we are now able to express the wedge product of an $M$-invariant $2$-form and $\ast_K\omega_j^{p,q;k}$ again as an image of the $K$-Hodge star operator. This plays an essential role in the derivation of explicit formulae for the $K$-codifferential $\delta_K$ and the operator ${\Cal L}_K^*$ in the following section. 

\begin{thm}\label{thm_wedge_product_2_form_omega}
Let $p$, $q$ and $k$ be integers so that $0 \le p, q, k-p, k-q \le n$. Then:
 \begin{enumerate}
 \item[(i)] $\omega_{1,1} \wedge \ast_K \omega_{j}^{p,q;k} = (-1)^{p+q}2i\left(j\ast_K\!\omega_{j-1}^{p,q-1;k}+ (q-j)\ast_K\!\omega_j^{p,q-1;k}\right)$.
 \item[(ii)]$\overline{\omega_{1,1}} \wedge \ast_K\omega_{j}^{p,q;k} = (-1)^{p+q+1}2i\left(j\ast_K\!\omega_{j-1}^{p-1,q;k} + (p-j)\ast_K\!\omega_{j}^{p-1,q;k}\right)$.
 \item[(iii)] $\begin{aligned}[t]\omega_{2,0} \wedge \ast_K\omega_{j}^{p,q;k} =\ &4j(n+1-(p+q)+j)\ast_K\!\omega_{j-1}^{p-1,q-1;k-1} \\
 &- 4(p-j)(q-j) \ast_K\!\omega_j^{p-1,q-1;k-1}\end{aligned}$.
 \end{enumerate}
\end{thm}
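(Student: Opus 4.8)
The plan is to verify each of the three identities directly, using Lemma \ref{lem_power_omega_2_0} to make both sides completely explicit. As orientation I would first record the bidegree, $K$-type and $P$-type of each side. Since $\ast_K$ sends $K$-type $(p,q)$ to $(n+1-q,n+1-p)$ and preserves the $P$-type, while wedging with $\omega_{1,1}$, $\overline{\omega_{1,1}}$ and $\omega_{2,0}$ raises the $K$-type by $(1,0)$, $(0,1)$ and $(1,1)$ respectively, one finds that in all three cases both sides are $M$-invariant forms of the same bidegree, $K$-type and $P$-type, matching $\ast_K\omega_{j'}^{p,q-1;k}$ in (i), $\ast_K\omega_{j'}^{p-1,q;k}$ in (ii) and $\ast_K\omega_{j'}^{p-1,q-1;k-1}$ in (iii). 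This fixes the shape of the answer and tells me exactly which two subscripts $j'$ can occur on each right-hand side, so that it only remains to confirm the two scalar coefficients.

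To pin those coefficients down I would first upgrade Lemma \ref{lem_power_omega_2_0} to a formula valid for every subscript. Because $\omega_{0,2}$ has $K$-type $(0,0)$ and even degree, $\ast_K$ commutes with wedge multiplication by $\omega_{0,2}$ without any sign; stripping the factor $\omega_{0,2}^{k-(p+q)+j}$ off $\omega_j^{p,q;k}$ and applying \eqref{eq:power_omega_2_0} to the remaining $\omega_{0,2}$-free form gives the explicit expression $\ast_K\omega_j^{p,q;k} = \varepsilon^{p,q;p+q-j}\,\omega_{0,2}^{k-(p+q)+j}\wedge iZ^*\wedge\overline{Z}^*\wedge\omega_{2,0}^{n-j}$. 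I would then substitute this on both sides, wedge the left-hand side with the relevant $2$-form, and extract the coefficients by contracting with the vertical insertion operators $\iota_{G^{1,0}}$, $\iota_{G^{0,1}}$ exactly as in the proof of the lemma. Pushing these contractions through the powers of $\omega_{2,0}$ and $\omega_{0,2}$ uses only the elementary relations $\iota_{G_X^{1,0}}\overline{\omega_{1,1}}=(F_X^{1,0})^{\flat}$, $\iota_{G_X^{1,0}}\omega_{0,2}=i\,\iota_{F_X^{1,0}}\omega_{1,1}$ and their conjugates from Proposition \ref{prop_relation_omega}; each such contraction turns a $G$-insertion into an $F$-insertion and lowers one power, so after summing over the orthonormal basis the whole comparison collapses to ratios of the explicit constants $\varepsilon$, which simplify to the stated prefactor $2i$ and the weights $j$, $q-j$, $p-j$ (and, in (iii), $j(n+1-(p+q)+j)$ and $(p-j)(q-j)$).

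The conceptual skeleton is thus short; the step I expect to be the main obstacle is the sign and combinatorial bookkeeping. Each right-hand side carries two terms, with subscripts $j-1$ and $j$, so both coefficients must be matched simultaneously: this forces me to contract in two complementary ways (distinguishing the two $\omega_{2,0}$-powers) and to keep careful track of the global sign $(-1)^{p+q}$ produced by commuting the even-degree insertions past the surrounding factors. I anticipate part (iii), the $\omega_{2,0}$-identity, to be the most delicate, since wedging with $\omega_{2,0}$ both raises the $\omega_{2,0}$-power and couples the two $\omega$-terms; closing the computation there should rest on the factorial identities packaged in $\varepsilon^{p,q;k}$ — equivalently the recursion \eqref{eqn_relations_kappa} of Lemma \ref{lem_recursion_kappa} — so that the cross-terms cancel and leave precisely $4j(n+1-(p+q)+j)$ and $-4(p-j)(q-j)$.
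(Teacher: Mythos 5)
Your overall route (reduce to the $\omega_{0,2}$-free case, make everything explicit via Lemma \ref{lem_power_omega_2_0}, then compare coefficients by contracting with insertion operators) is the same computational route the paper takes, but the pivot of your argument is a false identity. The proposed ``upgrade'' $\ast_K\omega_j^{p,q;k} = \varepsilon^{p,q;p+q-j}\,\omega_{0,2}^{k-(p+q)+j}\wedge iZ^*\wedge\overline{Z}^*\wedge\omega_{2,0}^{n-j}$ cannot hold: $\ast_K$ acts only on the first (horizontal) degree and preserves the $P$-type, so $\ast_K\omega_j^{p,q;k}$ has bidegree $\bigl(2n+2-(p+q),\,2k-(p+q)\bigr)$ and $P$-type $(k-p,k-q)$, whereas your right-hand side has bidegree $\bigl(2n+2-2j,\,2(k-(p+q)+j)\bigr)$ and $P$-type $\bigl(k-(p+q)+j,\,k-(p+q)+j\bigr)$; these agree only in the degenerate case $p=q=j$, which is exactly the induction base $k=p=q$ of the lemma. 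In general $\ast_K\omega_j^{p,q;k}$ is $iZ^*\wedge\overline{Z}^*$ wedged with a \emph{linear combination} of the forms $\omega_{j'}^{n-q,\,n-p;\,n+k-p-q}$, and the whole point of Lemma \ref{lem_power_omega_2_0} is that it avoids writing this combination down: it computes the dual only \emph{after} all vertical directions have been contracted away, trading the $G$-insertions for $F$-insertions into a pure power of $\omega_{2,0}$. So your plan to ``substitute this on both sides'' fails at the first step; the repair is to work throughout with the contracted identity \eqref{eq:power_omega_2_0}, i.e.\ to evaluate $\omega_{1,1}\wedge\ast_K\omega_{p+q-k}^{p,q;k}$ on a full complement of arguments $(Z,\overline{Z},F_{\mathbf{X}}^{1,0},F_{\mathbf{Y}}^{0,1},G_{\mathbf{Z}}^{1,0},G_{\mathbf{W}}^{0,1})$, expand the wedge product, and apply the lemma in both directions --- which is exactly what the paper's proof does.

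There is a secondary gap in your framing: the claim that bidegree/$K$-type/$P$-type bookkeeping ``tells me exactly which two subscripts $j'$ can occur'' is unjustified, since those invariants determine $(p',q',k')$ but are blind to the index $j'$, so a priori every admissible $j'$ could contribute; worse, when the relevant superscript exceeds $n$ the family $\{\omega_{j'}^{p',q';k'}\}_{j'}$ is linearly \emph{dependent} by Proposition \ref{prop_relation_omega}, so coefficients in such an expansion are not even unique and ``matching coefficients'' must be replaced by verifying equality of both sides on a spanning set of arguments (as the paper does, concluding that two $M$-invariant forms with the same evaluations coincide). Finally, your expectation that part (iii) closes via the recursion \eqref{eqn_relations_kappa} of Lemma \ref{lem_recursion_kappa} does not match what is actually needed: in the paper, after one application of Lemma \ref{lem_power_omega_2_0} the cross term is literally of the shape already handled in part (i), and the constants close up through ratios of the $\varepsilon^{p,q;k}$ alone, with no appeal to the $\kappa$-recursion.
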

\begin{proof} Since we can move the $(0,2)$-form $\omega_{0,2}$ outside of the $K$-Hodge star it suffices to consider those forms $\omega_j^{p,q;k}$ in which $\omega_{0,2}$ does not appear as a factor. By definition, this corresponds to $j = p+q-k$ for which Lemma \ref{lem_power_omega_2_0} is applicable. 

For part (i), let $\textbf{X}^{1,0}$ and $\textbf{Z}^{1,0}$ be holomorphic vectors on $\Bbb C^n$ of length $n-q+1$ and $k-p$, respectively, and let $\textbf{Y}^{0,1}$ and $\textbf{W}^{0,1}$ be antiholomorphic vectors of length $n-p$ and $k-q+1$, respectively. Then we rewrite the expression
\begin{equation}\label{eq:proof_theorem_a2_1}
\big(\omega_{1,1} \wedge \ast_K\omega_{p+q-k}^{p,q;k}\big)(Z, \overline{Z}, F_\textbf{X}^{1,0}, F_\textbf{Y}^{0,1}, G_\textbf{Z}^{1,0}, G_\textbf{W}^{0,1})
\end{equation}
as follows. First, we expand the wedge product by inserting the vectors $F_{X_s}^{1,0}$ and $G_{W_t}^{0,1}$ into $\omega_{1,1}$, where we pick up the signs $(-1)^{s+1}$ from the first vector and $(-1)^{t-k+q+1}$ from the second vector. Next, we use the relation $\omega_{1,1}(F_{X_s}^{1,0}, G_{W_t}^{0,1}) = -i\omega_{2,0}(F_{X_s}^{1,0}, F_{W_t}^{0,1})$ and apply Lemma \ref{lem_power_omega_2_0} to $\ast_K\omega_{p+q-k}^{p,q;k}$, which adds the coefficient $i\varepsilon^{p,q;k}$. All in all we obtain that \eqref{eq:proof_theorem_a2_1} equals
 \begin{align*}
   \textstyle\sum_{s,t}(-1)^{k-q+s+t}\varepsilon^{p,q;k}\omega_{2,0}(F_{X_s}^{1,0}, F_{W_t}^{0,1})\omega_{2,0}^{n-(p+q)+k}\left(F_\textbf{X}^{1,0;s}, F_\textbf{Y}^{0,1}, F_\textbf{Z}^{1,0}, F_\textbf{W}^{0,1;t}\right),
\end{align*}
where the wedge product $F_\textbf{X}^{1,0;s}$ of length $(n-q)$ is obtained from $F_\textbf{X}^{1,0}$ by omitting the $s$th factor, and similarly for $F_\textbf{W}^{0,1;t}$. Using that the Poisson kernel $\omega_{2,0}$ is of $K$-type $(1,1)$, the expression \eqref{eq:proof_theorem_a2_1} splits into the sum $(A) + (B)$, where 
\begin{align*}
 (A) &:= \tfrac{k-q+1}{n-(p+q)+k+1}\varepsilon^{p,q;k}\omega_{2,0}^{n-(p+q)+k+1}(F_\textbf{X}^{1,0}, F_\textbf{Y}^{0,1}, F_\textbf{Z}^{1,0}, F_\textbf{W}^{0,1}) \\
 (B) &:= \sum_{r = 1}^{k-p} \sum_{t=1}^{k-q+1} (-1)^{k-p+r+t}\varepsilon^{p,q;k}\omega_{2,0}(F_{Z_r}^{1,0}, F_{W_t}^{0,1})\omega_{2,0}^{n-(p+q)+k}(F_\textbf{X}^{1,0}, F_\textbf{Y}^{0,1}, F_\textbf{Z}^{1,0;r}, F_\textbf{W}^{0,1;t}).  
\end{align*}
Since $(A)$ is given by a power of $\omega_{2,0}$ we can immediately apply Lemma \ref{lem_power_omega_2_0}, and a direct computation yields
\begin{align*}
 (A) = (-1)^{p+q} 2i(p+q-k)\left(\ast_K\!\omega_{p+q-k-1}^{p,q-1;k}\right)(Z, \overline{Z}, F_\textbf{X}^{1,0}, F_\textbf{Y}^{0,1}, G_\textbf{Z}^{1,0}, G_\textbf{W}^{0,1}).
\end{align*}
In order to simplify $(B)$ we first apply Lemma \ref{lem_power_omega_2_0} to replace
$\omega_{2,0}^{n-(p+q)+k}$ with the invariant form $\ast_K\omega^{p,q-1;k-1}_{p+q-k}$
as well as change $\omega_{2,0}(F_{Z_r}^{1,0}, F_{W_t}^{0,1})$ to
$\omega_{0,2}(G_{Z_r}^{1,0}, G_{W_t}^{0,1})$. Finally, since the Poisson kernel
$\omega_{0,2}$ is of degree $(0,2)$ it can be moved into the $K$-Hodge star. All in
all, a direct computation yields
\begin{align*}
 (B) = (-1)^{p+q}2i(k-p) \left(\ast_K\omega_{p+q-k}^{p,q-1;k}\right)(Z, \overline{Z}, F_\textbf{X}^{1,0}, F_\textbf{Y}^{0,1}, G_\textbf{Z}^{1,0}, G_\textbf{W}^{0,1}).
\end{align*}
Since both expressions $(A)$ and $(B)$ have the same arguments as
\eqref{eq:proof_theorem_a2_1}, the corresponding $M$-invariant forms have to
coincide, proving (i). Furthermore, if we exchange $p$ and $q$ and apply complex
conjugation we immediately obtain the claimed formula in (ii).

Finally, to prove (iii) let $\textbf{X}^{1,0}$ and $\textbf{Z}^{1,0}$ be holomorphic
vectors on $\Bbb C^n$ of length $n-q+1$ and $k-p$, respectively, and
$\textbf{Y}^{0,1}$ and $\textbf{W}^{0,1}$ be antiholomorphic vectors on $\Bbb C^n$ of
length $n-p+1$ and $k-q$, respectively. We proceed similar as in the proof of (i) and
rewrite
\begin{equation}\label{eq:proof_theorem_a2_2}
 \left(\omega_{2,0} \wedge \ast_K\omega_{p+q-k}^{p,q;k}\right)(Z, \overline{Z},F_\textbf{X}^{1,0}, F_\textbf{Y}^{0,1}, G_\textbf{Z}^{1,0}, G_\textbf{W}^{0,1})
\end{equation}
as follows. First, we expand the wedge product by inserting the vectors
$F_{X_r}^{1,0}$ and $F_{Y_s}^{0,1}$ into $\omega_{2,0}$, which adds the sign
$(-1)^{n-q+r+s}$. Next, we apply Lemma \ref{lem_power_omega_2_0}, thereby changing
$\ast_K\omega^{p,q;k}_{p+q-k}$ to a multiple of $\omega_{2,0}^{n-(p+q)-k}$. All in
all we obtain that \eqref{eq:proof_theorem_a2_2} coincides with
\begin{align*}
 \textstyle\sum_{r,s} (-1)^{n-q+r+s} i\varepsilon^{p,q;k} \omega_{2,0}(F_{X_r}^{1,0}, F_{Y_s}^{0,1}) \omega_{2,0}^{n-(p+q)-k}\left(F_\textbf{X}^{1,0;r}, F_\textbf{Y}^{0,1;s}, G_\textbf{Z}^{1,0}, G_\textbf{W}^{0,1}\right),
\end{align*}
which can be written as $(C) + (D)$, where
\begin{align*}
 (C) &:= \tfrac{n-q+1}{n-(p+q)+k+1} i\varepsilon^{p,q;k} \omega_{2,0}^{n-(p+q)+k+1}(F_\textbf{X}^{1,0}, F_\textbf{Y}^{0,1}, F_\textbf{Z}^{1,0}, F_\textbf{W}^{0,1}),\\
 (D) &:= \textstyle\sum_{r, t}i\varepsilon^{p,q;k} (-1)^{k-q+r+t+1} \omega_{2,0}(F_{X_r}^{1,0}, F_{W_t}^{0,1}) \omega_{2,0}^{n-(p+q)+k}(F_\textbf{X}^{1,0;r}, F_\textbf{Y}^{0,1}, F_\textbf{Z}^{1,0}, F_\textbf{W}^{0,1;t}).
\end{align*}
For $(C)$ we immediately apply Lemma \ref{lem_power_omega_2_0}, which by a direct computation yields
\begin{align*}
 (C) = 4(n-q+1)(p+q-k)\ast_K\omega_{p+q-k-1}^{p-1,q-1;k-1}(Z, \overline{Z}, F_{\bf X}^{1,0}, F_{\bf Y}^{0,1}, G_{\bf Z}^{1,0}, G_{\bf W}^{0,1}).
\end{align*}
On the other hand, the expression $(D)$ has the same form as \eqref{eq:proof_theorem_a2_1}, hence by a direct computation we obtain
\begin{align*}
 (D) = (-1)^{p+q+1}2i(k-q) \left(\omega_{1,1} \wedge \ast_K\!\omega_{p+q-k}^{p-1,q;k-1}\right)(Z, \overline{Z}, F_\textbf{X}^{1,0}, F_\textbf{Y}^{0,1}, G_\textbf{Z}^{1,0}, G_\textbf{W}^{0,1}).
\end{align*}
All in all we have
\begin{align*}
 \omega_{2,0} \wedge \ast_K\omega_{p+q-k}^{p,q;k} =\ &4(n-q+1)(p+q-k)\ast_K\omega_{p+q-k-1}^{p-1,q-1;k-1} \\
 &+ (-1)^{p+q+1}2i(k-q) \omega_{1,1} \wedge \ast_K\omega_{p+q-k}^{p-1,q;k-1},
\end{align*}
so by applying formula (i) the claim follows.
\end{proof}

\subsection{Formulae for $\delta_K$ and $\mathcal{L}_K^*$}
Following the previous section we now determine explicit formulae for the images of
the $M$-invariant forms $\omega_j^{p,q;k}$ under the $M$-equivariant maps $\delta_K$
and $\mathcal{L}_K^*$.

Recall from Section \ref{3.2} that the decomposition of Poisson kernels into
$K$-types induces a splitting $d_K = \partial_K + \overline{\partial}_K$, where the
first and second operator map Poisson kernels of $K$-type $(p,q)$ to those of
$K$-type $(p+1,q)$ and $(p, q+1)$, respectively.  Indeed, the analogous arguments as
in Proposition \ref{prop3.2} show that if $\Phi$ is any Poisson transform with kernel
$\phi$, then $\partial \circ \Phi$ is again a Poisson transform with kernel
$\partial_K\phi$, and similarly for its complex conjugate. Furthermore, the operators
$\partial_K$ and $\overline{\partial_K}$ are again antiderivations, square to zero,
anticommute with each other and are related via
$\overline{\partial}_K\overline{\phi} = \overline{\partial_K\phi}$ for all
$M$-invariant forms $\phi$ on $(\frak g/\frak m)_{\Bbb C}$.

Similarly, the $K$-codifferential decomposes as
$\delta_K = \partial_K^* + \overline{\partial}^*_K$, where the first and second
operator map Poisson kernels of $K$-type $(p,q)$ to those of $K$-type $(p-1,q)$ and
$(p,q-1)$, respectively. Explicitly, we have
$\partial_K^* = -\ast_K \overline{\partial}_K \ast_K$ and similarly for
$\overline{\partial}_K^*$, so since the $K$-Hodge star is complex linear they are
related via $ \overline{\partial}^*_K\overline{\phi} = \overline{\partial_K^*\phi}$
for all Poisson kernels $\phi$.

\begin{prop}\label{prop_formula_K_codifferential}
 Let $p$, $q$ and $k$ be integers so that $0 \le p, q, k-p, k-q \le n$. Then we have
  \begin{align*}
  \partial^*_K\big(Z^* \wedge \overline{Z}^* \wedge \omega_j^{p,q;k}\big) =\ &2(n-k) \overline{Z}^* \wedge \omega_j^{p,q;k}\\
 &+ 2ij(k-(p+q)+j)  I^* \wedge Z^* \wedge \overline{Z}^* \wedge \omega_{j-1}^{p-1,q;k-1} \\
 &-2i(p-j)(n-k+p-j+1) I^* \wedge Z^* \wedge \overline{Z}^* \wedge \omega_{j}^{p-1,q;k-1}.
 \end{align*}
 Furthermore, this implies
 \begin{align*}
 \partial_K^*(Z^* \wedge \omega_j^{p,q;k}) =\ &2(n+1-k)\omega_j^{p,q;k} \\
 &+ 2ij(n+1-(p+q)+j) Z^* \wedge \overline{Z}^* \wedge \omega_{j-1}^{p-1,q-1;k-1} \\
 &-  2i(p-j)(q-j) Z^* \wedge \overline{Z}^* \wedge  \omega_j^{p-1,q-1;k-1} \\
 &+ 2ij(k-(p+q)+j) I^* \wedge Z^* \wedge \omega_{j-1}^{p-1,q;k-1} \\
 &- 2i(p-j)(n-k+q-j+1) I^* \wedge Z^* \wedge \omega_j^{p-1,q;k-1},\\
 \partial_K^*(\overline{Z}^* \wedge \omega_j^{p,q;k}) =\ &2ij(k-(p+q)+j) I^* \wedge \overline{Z}^* \wedge \omega_{j-1}^{p-1,q;k-1} \\
 &- 2i(p-j)(n-k+q-j+1) I^* \wedge\overline{Z}^* \wedge \omega_j^{p-1,q;k-1},
\end{align*} 
 as well as
\begin{align*}
 \partial_K^*(I^* \wedge \omega_j^{p,q;k}) =\ &2ij(n+1-(p+q) + j) I^* \wedge \overline{Z}^* \wedge \omega_{j-1}^{p-1,q-1;k-1} \\
 &-2i(p-j)(q-j)I^* \wedge \overline{Z}^* \wedge  \omega_j^{p-1,q-1;k-1}, \\
 \partial_K^*(I^* \wedge Z^* \wedge \overline{Z}^* \wedge \omega_{j}^{p,q;k}) =\ &2(k-n+1)I^* \wedge \overline{Z}^* \wedge \omega_j^{p,q;k}.
 \end{align*}
\end{prop}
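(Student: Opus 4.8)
The plan is to compute each of the five formulae directly from the definition $\partial_K^* = -\ast_K \overline{\partial}_K \ast_K$ recorded just before the statement, using Lemma \ref{lem_power_omega_2_0} and Theorem \ref{thm_wedge_product_2_form_omega} as the two workhorses. The guiding structural fact is that $\ast_K$ operates only in the $K$-directions, which are spanned by $Z$ and the $F_X^{1,0}$: consequently $\ast_K$ interchanges invariant forms carrying the full factor $Z^* \wedge \overline{Z}^*$ with forms lying purely in the $F$-directions, whereas the $P$-direction factors $I^*$ and $\omega_{0,2}$ pass through $\ast_K$ up to sign. Combined with the involution property $\ast_K\ast_K = -\mathrm{id}$, this lets one read off $\ast_K$ on all the invariant forms in play: Lemma \ref{lem_power_omega_2_0} gives $\ast_K\omega_{p+q-k}^{p,q;k}$ as a power of $\omega_{2,0}$ preceded by $iZ^*\wedge\overline{Z}^*$, and reading this identity backwards evaluates the Hodge star on the forms containing $Z^*\wedge\overline{Z}^*$ that occur as inputs below.

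I would prove the first (main) formula as the representative hard case and organise the calculation as a three-step pass. First apply $\ast_K$ to $Z^* \wedge \overline{Z}^* \wedge \omega_j^{p,q;k}$; since $Z^*\wedge\overline{Z}^*$ saturates the $Z$-direction, the result is free of $Z^*$ and $\overline{Z}^*$ and is computed from Lemma \ref{lem_power_omega_2_0} after reducing to the representative $j=p+q-k$ (reinstating the factors $\omega_{0,2}$, which pass freely through $\ast_K$). Next apply $\overline{\partial}_K$ using its derivation property together with the conjugates of the derivative formulae from Section \ref{3.4}; the nonzero contributions come from $\overline{\partial}_K Z^* = Z^*\wedge\overline{Z}^* - i\omega_{2,0}$ and from the derivatives of $I^*$, $\omega_{2,0}$ and $\omega_{1,1}$ recorded there. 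Finally apply $\ast_K$ once more: each basic $2$-form $\omega_{1,1}$, $\overline{\omega_{1,1}}$, $\omega_{2,0}$ so produced now stands wedged against $\ast_K$ of an $\omega$-form, and Theorem \ref{thm_wedge_product_2_form_omega} rewrites every such product as a combination of the standard forms $\ast_K\omega_{j'}^{p',q';k'}$, so that a further $\ast_K$ (with $\ast_K\ast_K=-\mathrm{id}$) returns honest $\omega$'s. Collecting terms, the ratios of the factors $\varepsilon^{p',q';k'}$ together with the signs $(-1)^{p+q}$ simplify to the stated coefficients.

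For the remaining four formulae I would reduce to cases already treated rather than repeat the full pass. Each $\omega_j^{p,q;k}$ vanishes on insertion of $Z$, $\overline{Z}$ and $I$, which yields clean identities such as $Z^* \wedge \omega_j^{p,q;k} = -\iota_{\overline{Z}}\bigl(Z^* \wedge \overline{Z}^* \wedge \omega_j^{p,q;k}\bigr)$, with $\iota_{\overline{Z}}$ the interior product by the vector dual to $\overline{Z}^*$; combining this with the main formula and the commutation behaviour of $\partial_K^*$ with $\iota_{\overline{Z}}$ (itself extracted from $\partial_K^* = -\ast_K \overline{\partial}_K \ast_K$) produces the formulae for $\partial_K^*(Z^* \wedge \omega_j^{p,q;k})$ and, by the conjugation symmetry $\overline{\partial}_K^*\overline{\phi} = \overline{\partial_K^*\phi}$, for $\partial_K^*(\overline{Z}^* \wedge \omega_j^{p,q;k})$; the extra summands relative to the main formula are exactly the commutator corrections. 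The two formulae involving $I^*$ then follow because $I^* \wedge \cdot$ commutes with $\ast_K$ up to sign and meets $\overline{\partial}_K$ only through $\overline{\partial}_K I^* = \overline{Z}^* \wedge I^*$, reducing $\partial_K^*(I^* \wedge \omega_j^{p,q;k})$ and $\partial_K^*(I^* \wedge Z^* \wedge \overline{Z}^* \wedge \omega_j^{p,q;k})$ to the previously established cases.

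The main obstacle I anticipate is not conceptual but combinatorial: each application of $\overline{\partial}_K$ generates several terms, each is then Hodge-starred through Theorem \ref{thm_wedge_product_2_form_omega} (which itself splits into two summands per input), and the resulting signs and $\varepsilon$-ratios must cancel to leave the clean coefficients displayed. The discipline that keeps this manageable is to work throughout with the representatives $j = p+q-k$, where $\omega_{0,2}$ is absent and Lemma \ref{lem_power_omega_2_0} applies verbatim, and to reinstate the $\omega_{0,2}$ factors only at the end using that they commute with $\ast_K$; this isolates all the sign and coefficient bookkeeping into the single template computation of the main formula.
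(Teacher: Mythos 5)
Your proposal is correct in outline and shares the paper's outer skeleton: the sandwich $\partial_K^* = -\ast_K\overline{\partial}_K\ast_K$, Lemma \ref{lem_power_omega_2_0} and Theorem \ref{thm_wedge_product_2_form_omega} as the two computational engines, and reduction of the four subsidiary formulae to the main one via insertions and adjointness. Where you genuinely diverge is the middle step. The paper never expands $\ast_K\bigl(Z^*\wedge\overline{Z}^*\wedge\omega_j^{p,q;k}\bigr)$ as an explicit wedge polynomial: it differentiates the starred form directly with the Chevalley--Eilenberg formula \eqref{eq:exterior_derivative_appendix}, uses grading-parity arguments to show that only the $\overline{Z}^*\wedge$ and $I^*\wedge$ components of $\overline{\partial}_K\ast_K\omega$ survive (equation \eqref{eq:K_codifferential_omega}), evaluates the $\overline{Z}$-component from the explicit brackets $[\overline{Z},\xi_X^{1,0}]$ etc.\ as the multiple $(k-n)\ast_K\omega$, and handles the $I$-component by peeling off the $\omega_{0,2}$-power and invoking Theorem \ref{thm_wedge_product_2_form_omega}(ii) --- everything stays in the form $\ast_K(\cdot)$ until the final application of $-\ast_K$. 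Your alternative --- expand the starred form explicitly and differentiate with the antiderivation property and the conjugated table from Section \ref{3.4} --- would also work, but it hides a step you must acknowledge: Lemma \ref{lem_power_omega_2_0} characterizes $\ast_K\omega_{p+q-k}^{p,q;k}$ only through insertions of $G$-vectors, so ``reading it backwards'' does not immediately yield the basis expansion in terms of the forms $\omega_{j'}^{n-q,n-p;\,n+k-p-q}$ that your term-by-term differentiation requires; since the relevant invariant subspace is in general not one-dimensional in $j'$, you must first invert the insertion relations, which is exactly the linear algebra the paper's insertion-level manipulations avoid. What your route buys in exchange is uniformity: one template computation driven by the table of basic derivatives, instead of ad hoc bracket evaluations and vanishing arguments.

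On the reductions your instincts are right, but the paper's version is cleaner than generic ``commutator corrections'': using $2Z^\flat = \overline{Z}^*$ (so wedging with $\overline{Z}^*$ is, up to a constant, $\ast_K$-adjoint to $\iota_Z$) together with $\overline{\partial}_K\overline{Z}^* = 0$, it obtains the exact identity $\partial_K^*\bigl(\overline{Z}^*\wedge\omega_j^{p,q;k}\bigr) = -\iota_Z\,\partial_K^*\bigl(Z^*\wedge\overline{Z}^*\wedge\omega_j^{p,q;k}\bigr)$ with no correction at all, and genuine corrections arise only for $Z^*\wedge\omega_j^{p,q;k}$, where $\overline{\partial}_K Z^* = Z^*\wedge\overline{Z}^* - i\omega_{2,0}\neq 0$ --- precisely the structure you predicted. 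One caution if you carry this out: the reductions link the coefficients rigidly, and applying $-\iota_Z$ to the displayed main formula produces the factor $(n-k+p-j+1)$ where the displayed formula for $\partial_K^*\bigl(\overline{Z}^*\wedge\omega_j^{p,q;k}\bigr)$ shows $(n-k+q-j+1)$; resolving this $p\leftrightarrow q$ tension (a typo on one side or the other) is exactly the kind of bookkeeping your final paragraph rightly identifies as the real difficulty, so the discipline you describe there is not optional.
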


\begin{proof}
  In order to compute the image of
  $\omega := Z^* \wedge \overline{Z}^* \wedge \omega_j^{p,q;k}$ under $\partial_K^*$
  we have to determine the image of $\ast_K\omega$ under the partial derivative
  $\overline{\partial}_K$. Recall that for an $M$-invariant linear functional
  $\alpha$ on $(\frak g/\frak m)_{\mathbb C}^m$, the exterior derivative of the
  corresponding invariant differential form is induced by the functional $d\alpha$
  which sends vectors $X_0 + \frak m, \dotso, X_{m} + \frak m \in \frak g/\frak m$ to
\begin{align}\label{eq:exterior_derivative_appendix}
 \textstyle\sum_{i < j} (-1)^{i+j} \alpha([X_i, X_j] + \frak m, X_0 + \frak m, \dotso, \hat{\imath}, \dotso, \hat{\jmath}, \dotso, X_{m} + \frak m),
\end{align}
where the hats denote omission. Furthermore, if $\alpha$ is of $K$-type $(r,s)$ and
bidegree $(r+s, \ell)$, $\overline{\partial}_K\alpha$ is computed using
\eqref{eq:exterior_derivative_appendix} by inserting $\ell$ vectors in
$(\mathfrak{k}/\mathfrak{m})_{\mathbb{C}}$ and $r+s+1$ vectors in
$(\mathfrak{p}/\mathfrak{m})_{\mathbb{C}}$ of which $r$ are holomorphic and $s+1$ are
antiholomorphic.

In our case the element $\alpha = \ast_K\omega$ is trivial upon insertion of any
invariant vector $Z$, $\overline{Z}$ and $I$. Furthermore, any representative of
these vectors in $\mathfrak{g}$ is contained in the direct sum of all even grading
components, whereas for $X$, $Y \in \mathbb{C}^n$ the representatives $\xi_X$ of
$F_X$ and $\eta_Y$ of $G_Y$ with trivial $\mathfrak{m}$-part are contained in the odd
grading components. Therefore, the vector $[X_i, X_j] + \mathfrak{m}$ is
$M$-invariant unless it is induced by a pairing of an invariant vector and one of the
vectors $\xi_X$ or $\eta_Y$. In addition, using the bracket relations on
$\mathfrak{g}$ the pairing with $Z$ preserves holomorphic vectors and antiholomorphic
vectors, hence by the $K$-type of $\ast_K \omega$ the corresponding summands in
\eqref{eq:exterior_derivative_appendix} are trivial. All in all we deduce that
\begin{align}\label{eq:K_codifferential_omega}
 \overline{\partial}_K\ast_K\omega = \overline{Z}^* \wedge \iota_{\overline{Z}}\overline{\partial}_K \ast_K\omega + I^* \wedge \iota_I\overline{\partial}_K \ast_K\omega.
\end{align}

For the first summand in \eqref{eq:K_codifferential_omega}, let us denote the representative of $\overline{Z}$ in $\frak g$ with trivial $\frak m$-part by the same symbol. Then the pairings of this vector with the holomorphic and antiholomorphic parts of $\xi_X$ and $\eta_Y$ are given by
\begin{align*}
 [\overline{Z}, \xi_X^{1,0}] &= \frac{1}{2}\xi_X^{1,0}, &  [\overline{Z}, \xi_X^{0,1}] &= \frac{1}{2} \xi_X^{0,1}, & [\overline{Z}, \eta_Y^{1,0}]&= 0, & [\overline{Z}, \eta_Y^{0,1}] &= 2 \xi_Y^{0,1} - \frac{1}{2}  \eta_Y^{0,1}.
 \end{align*}
Therefore, since $\ast_K\omega$ is of bidegree $(2n-(p+q), 2k-(p+q))$ a direct computation using \eqref{eq:exterior_derivative_appendix} shows that 
\begin{align}\label{eq:proof_proposition_a3_derivative_omega_Z}
 \iota_{\overline{Z}} \overline{\partial}_K\ast_K\omega = (k-n) \ast_K\omega.
\end{align}

In order to compute the other summand of \eqref{eq:K_codifferential_omega}, we write $\omega$ as the wedge product of $\omega_{0,2}^{k-(p+q)+j}$ and $\tilde{\omega} := Z^* \wedge \overline{Z}^* \wedge \omega_{j}^{p,q;p+q-j}$, which does not contain $\omega_{0,2}$ as a factor. Since both $\overline{\partial}_K$ and $\iota_I$ are antiderivations and $\iota_I \ast_K\omega = 0$  we get
\begin{align}\label{eq:proof_proposition_a3_derivative_omega_I}
 \iota_I\overline{\partial}_K\ast_K\omega = \left(\iota_I\overline{\partial}_K\ast_K\tilde{\omega}\right) \wedge \omega_{0,2}^{k-(p+q)+j}  + \left(\ast_K\tilde{\omega}\right) \wedge \iota_I\overline{\partial}_K\omega_{0,2}^{k-(p+q)+j}.
\end{align}
From Section \ref{3.4} we know that $\iota_I\overline{\partial}_K\omega_{0,2} = - \overline{\omega_{1,1}}$, which we insert into the above expression and afterwards apply Theorem \ref{thm_wedge_product_2_form_omega}(ii) to rewrite $\overline{\omega_{1,1}} \wedge \ast_K\tilde{\omega}$ as an image of the $K$-Hodge star operator.

Thus, it remains to compute $\iota_I \overline{\partial}_K \ast_K \tilde{\omega}$,
which will be done using formula \eqref{eq:exterior_derivative_appendix}. Let us
denote the representative of $I$ in $\frak g$ with trivial $\frak m$-part by the same
symbol. Regarding the degree of $\ast_K \tilde{\omega}$ a moment of thought shows
that the only nontrivial contribution in \eqref{eq:exterior_derivative_appendix} is
given by the summands containing the pairing of $I$ with $\xi_X$ for
$X \in \mathbb{C}^n$. Now a direct computation shows that
 \begin{align*}
 [I, \xi_X^{1,0}] &= i(\xi_X^{1,0} - \eta_X^{1,0}), & [I, \xi_X^{0,1}] &= -i(\xi_X^{0,1} - \eta_X^{0,1}),
\end{align*}
for all $X$, $Y \in \Bbb C^n$, so since the pairing with $I$ respects holomorphic and antiholomorphic vectors the only nontrivial pairing in \eqref{eq:exterior_derivative_appendix} is between $I$ and $\xi_X^{0,1}$. Therefore, using relation
\begin{align*}
 \iota_{G_Y^{0,1}}\ast_K\tilde{\omega} = (-1)^{p+q} 2(p-j)\iota_{F_Y^{0,1}}\left(Z^* \wedge \overline{Z}^* \wedge \ast_K\omega_{j}^{p-1,q;p+q-j-1}\right),
\end{align*}
which follows from Lemma \ref{lem_power_omega_2_0}, a direct computation yields
\begin{align*}
 \iota_I\overline{\partial}_K\!\ast_K\!\tilde{\omega} = (-1)^{p+q+1}2i(p-j)(n-q+1)\ast_K\!\left( Z^* \wedge \overline{Z}^* \wedge \omega_{j}^{p-1,q;p+q-j-1}\right).
\end{align*}
Inserting this back into \eqref{eq:proof_proposition_a3_derivative_omega_I} and move the power of $\omega_{0,2}$ back into the $K$-Hodge star we obtain by a direct computation that
\begin{align}\label{eq:proof_proposition_a3_derivative_omega_I_full}
\begin{aligned}
 \iota_I\overline{\partial}_K\ast_K\omega =\ &(-1)^{p+q}2i (p-j)(k-n-1-p+j) \ast_K(Z^* \wedge \overline{Z}^* \wedge \omega_j^{p-1,q;k-1}) \\
 &+ (-1)^{p+q} 2ij (k-(p+q)+j) \ast_K(Z^* \wedge \overline{Z}^* \wedge \omega_{j-1}^{p-1,q;k-1}).
 \end{aligned}
\end{align}

Finally, we insert \eqref{eq:proof_proposition_a3_derivative_omega_Z} and
\eqref{eq:proof_proposition_a3_derivative_omega_I_full} into
\eqref{eq:K_codifferential_omega} and apply the negative of the $K$-Hodge star to the
resulting equation. Then a direct computation yields the claimed formula.

The formulae for the images of the other Poisson kernels under $\partial_K^*$ follow
by combining the above with the formulae for $K$-derivative of the Poisson kernels of
low degree from \ref{3.4} as well as the adjointness of the wedge product and the
interior product with respect to the $K$-Hodge star. As an example, we compute the
$K$-codifferential of $\iota_Z\omega = \overline{Z}^* \wedge \omega_j^{p,q;k}$. In
order to do so, we first apply the $K$-Hodge star to this element, which by the
relation $2 Z^\flat = \overline{Z}^*$ coincides with
$\frac{1}{2}(-1)^{p+q} \overline{Z}^* \wedge \ast_K\omega$. Next, we apply the
operator $\overline{\partial}_K$, where we use that
$\overline{\partial}_K\overline{Z}^* = 0$ (c.f. Section \ref{3.4}). All in all,
 \begin{align*}
  \partial^*_K\left(\overline{Z}^* \wedge \omega_j^{p,q;k}\right) &= \frac{1}{2}(-1)^{p+q} \ast_K \left(\overline{Z}^* \wedge \overline{\partial}_K\ast_K\omega \right) = - \iota_Z \partial_K^*\omega,
 \end{align*}
where we used again the adjointness of the interior and the wedge product for the last equality.
\end{proof}
For the rest of this section we derive formulae for the adjoint of the $K$-Lefschetz
map. In order to do so, let $\omega_K$ be the pullback of the K\"{a}hler form on
$G/K$ and as before denote the corresponding $M$-invariant element in
$\Lambda^{2,0}(\frak g / \frak m)^*$ by the same symbol. Recall from Section
\ref{3.2} the definitions of the $K$-Lefschetz map $\Cal L_K$ and its adjoint
$\Cal L_K^*$. Since these maps are $G$-equivariant, they induce $M$-equivariant
linear maps on the level of the underlying representations, which we denote by the
same symbols.

\begin{prop}\label{prop_adjoint_K_Lefschetz}
Let $p, q$ and $k$ be integers so that $0 \le p, q, k-p, k-q \le n$. Then
\begin{align*}
 \Cal L_K^*\omega_j^{p,q;k} = 2j(n+1-(p+q)+j) \omega_{j-1}^{p-1,q-1;k-1} - 2(p-j)(q-j) \omega_j^{p-1,q-1;k-1}.
\end{align*}
Furthermore, we have
\begin{align*}
 \Cal L_K^*(Z^* \wedge \omega_j^{p,q;k}) &= Z^* \wedge \Cal L_K^*\omega_j^{p,q;k} & \Cal L_K^*(\overline{Z}^* \wedge \omega_j^{p,q;k}) &= \overline{Z}^* \wedge \Cal L_K^*\omega_j^{p,q;k} \\
  \Cal L_K^*(I^* \wedge \omega_j^{p,q;k}) &= I^* \wedge \Cal L_K^*\omega_j^{p,q;k}
  \end{align*}
  as well as
  \begin{align*}
    \Cal L_K^*(I^* \wedge Z^* \wedge \overline{Z}^* \wedge \omega_j^{p,q;k}) &= I^* \wedge Z^* \wedge \overline{Z}^* \wedge \Cal L_K^*\omega_j^{p,q;k} - 2i I^* \wedge \omega_j^{p,q;k}.
\end{align*}
\end{prop}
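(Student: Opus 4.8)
The plan is to compute $\Cal L_K^*$ from its definition $\Cal L_K^* = -\ast_K\o\Cal L_K\o\ast_K$, using the explicit Kähler form $\omega_K = \tfrac i2 Z^*\wedge\overline{Z}^* + \tfrac12\omega_{2,0}$ from \eqref{eq:pullback_Kaehler_form} and the relation $\ast_K^2 = -\id$. The decisive structural observation is that $\ast_K\omega_j^{p,q;k}$ always contains $Z^*\wedge\overline{Z}^*$ as a factor: since $\omega_{0,2}$ has bidegree $(0,2)$ it commutes with $\ast_K$, so one pulls out the $\omega_{0,2}$-power and is reduced to $\ast_K\omega_j^{p,q;p+q-j}$, for which Lemma \ref{lem_power_omega_2_0} exhibits precisely such a factor. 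Hence $Z^*\wedge\overline{Z}^*\wedge\ast_K\omega_j^{p,q;k} = 0$, so the first summand of $\omega_K$ drops out of $\omega_K\wedge\ast_K\omega_j^{p,q;k}$.

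For the main formula, this reduces $\omega_K\wedge\ast_K\omega_j^{p,q;k}$ to $\tfrac12\,\omega_{2,0}\wedge\ast_K\omega_j^{p,q;k}$, which Theorem \ref{thm_wedge_product_2_form_omega}(iii) evaluates as a combination of $\ast_K\omega_{j-1}^{p-1,q-1;k-1}$ and $\ast_K\omega_j^{p-1,q-1;k-1}$. Applying $-\ast_K$ and replacing each $\ast_K\ast_K$ by $-\id$: the factor $\tfrac12$ against the coefficients $4j(n+1-(p+q)+j)$ and $-4(p-j)(q-j)$ of Theorem \ref{thm_wedge_product_2_form_omega}(iii), together with the sign flips, yields exactly $2j(n+1-(p+q)+j)$ and $-2(p-j)(q-j)$, i.e.\ the asserted identity for $\Cal L_K^*\omega_j^{p,q;k}$.

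For the commutation formulas I would use that $\Cal L_K^*$, being the $g_K$-adjoint of wedging by $\omega_K$, is contraction by the dual bivector $\Pi = \Pi_Z + \Pi_F$, where $\Pi_F$ is dual to $\tfrac12\omega_{2,0}$ and contracts only $F$-directions, while the $Z$-line part acts as $\iota_{\Pi_Z} = -2i\,\iota_{\overline Z}\iota_Z$ (dual to $\tfrac i2 Z^*\wedge\overline{Z}^*$). The key point is that all the forms $\omega_j^{p,q;k}$ vanish on insertion of $Z$, $\overline{Z}$ and $I$, so $\iota_{\Pi_Z}\omega_j^{p,q;k} = 0$ and $\Cal L_K^*\omega_j^{p,q;k} = \iota_{\Pi_F}\omega_j^{p,q;k}$, consistent with the main formula. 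Since $\Pi_F$ contracts only $F$-vectors it passes through the wedge factors $Z^*$, $\overline{Z}^*$ and $I^*$ (each of which annihilates $F$-vectors), producing the three ``pass-through'' identities; the accompanying $\Pi_Z$-terms vanish because $\iota_{\overline Z}\iota_Z$ applied to $Z^*\wedge\omega_j^{p,q;k}$, $\overline{Z}^*\wedge\omega_j^{p,q;k}$ or $I^*\wedge\omega_j^{p,q;k}$ returns $0$ — there is never both a $Z^*$ and a $\overline{Z}^*$ to absorb the pair. This parallels the adjointness manipulations already used at the end of the proof of Proposition \ref{prop_formula_K_codifferential}.

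The step I expect to be the main obstacle is the last formula, which is the only place where $\iota_{\Pi_Z}$ acts nontrivially. For $I^*\wedge Z^*\wedge\overline{Z}^*\wedge\omega_j^{p,q;k}$ both $Z^*$ and $\overline{Z}^*$ are present, and a direct contraction gives $\iota_{\overline Z}\iota_Z(I^*\wedge Z^*\wedge\overline{Z}^*\wedge\omega_j^{p,q;k}) = I^*\wedge\omega_j^{p,q;k}$, whence the correction $-2i\,I^*\wedge\omega_j^{p,q;k}$, while $\iota_{\Pi_F}$ again passes through the three $1$-forms to give $I^*\wedge Z^*\wedge\overline{Z}^*\wedge\Cal L_K^*\omega_j^{p,q;k}$. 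The delicate part is pinning down the constant $-2i$, i.e.\ the normalization of $\Pi_Z$ as the bivector dual to $\tfrac i2 Z^*\wedge\overline{Z}^*$ under $g_K$ (equivalently, that $\Cal L_K^*\omega_K = n+1$ splits as $1$ from the $Z$-line and $n$ from the $F$-part), and keeping the various $\ast_K^2 = -\id$ signs consistent. As an independent cross-check I would rederive the same correction from $iZ^*\wedge\overline{Z}^* = 2\omega_K - \omega_{2,0}$ together with the Hodge--Lefschetz relation $\Cal L_K^*(\omega_K\wedge\xi) = \omega_K\wedge\Cal L_K^*\xi + (n+1-\deg_K\xi)\,\xi$ for the horizontal Kähler structure of complex dimension $n+1$ (with vertical legs inert) and the identity $\omega_{2,0}\wedge\omega_j^{p,q;k} = \omega_{j+1}^{p+1,q+1;k+1}$; this reduces everything to the main formula.
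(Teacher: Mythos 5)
Your proposal is correct and takes essentially the same route as the paper: the paper's own proof derives the general formula $\Cal L_K^*\alpha = 2i\,\iota_Z\iota_{\overline{Z}}\alpha + \tfrac{1}{2}\ast_K^{-1}(\omega_{2,0}\wedge\ast_K\alpha)$ in \eqref{eq:colefschetz_map}, whose first summand is exactly your $\iota_{\Pi_Z}$ and whose second is your $\iota_{\Pi_F}$, kills the first summand on $\omega_j^{p,q;k}$ because $Z$ and $\overline{Z}$ insert trivially (equivalent to your observation that $\ast_K\omega_j^{p,q;k}$ carries a $Z^*\wedge\overline{Z}^*$ factor), and then invokes Theorem \ref{thm_wedge_product_2_form_omega}(iii) with the same constants --- you cite part (iii) correctly, whereas the paper's reference to part (ii) at that point is a typo. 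Your pass-through argument for the remaining identities, including the direct contraction $2i\,\iota_Z\iota_{\overline{Z}}(I^*\wedge Z^*\wedge\overline{Z}^*\wedge\omega_j^{p,q;k}) = -2i\,I^*\wedge\omega_j^{p,q;k}$, is precisely the content of the step the paper compresses into ``the expressions for the other Poisson kernels follow analogously''.
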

\begin{proof} Recall that the pullback of the K\"{a}hler form is
  $\omega_K = \frac{1}{2}(i Z^* \wedge \overline{Z}^* + \omega_{2,0})$. Inserting
  this into the formula for $\Cal L_K^*\alpha$, where
  $\alpha \in \Lambda^*(\frak g/\frak m)^*_{\mathbb{C}}$ is $M$-invariant, and using
  the relation
  $\left(Z^* \wedge \overline{Z}^* \wedge \ast_K\alpha\right) =
  4\ast_K\iota_Z\iota_{\overline{Z}} \alpha$ we deduce
\begin{align}\label{eq:colefschetz_map}
 \Cal L_K^*\alpha = 2i \iota_Z\iota_{\overline{Z}}\alpha + \frac{1}{2} \ast_K^{-1}(\omega_{2,0}\wedge \ast_K\alpha).
\end{align}
In the case $\alpha = \omega_j^{p,q;k}$ the invariant vectors $Z$ and $\overline{Z}$ insert trivially into $\alpha$, so the first summand of \eqref{eq:colefschetz_map} vanishes. For the second summand of \eqref{eq:colefschetz_map} we can apply Theorem \ref{thm_wedge_product_2_form_omega}(ii), which yields 
\begin{align*}
  \Cal L_K^*\omega_j^{p,q;k} = 2j(n+1-(p+q)+j)\omega_{j-1}^{p-1,q-1;k-1} - 2(p-j)(q-j)\omega_j^{p-1,q-1;k-1}.
\end{align*}
The expressions for the other Poisson kernels follow analogously.
\end{proof}

\begin{bibdiv}
\begin{biblist}

\bib{Biquard}{article}{ 
    author={Biquard, Olivier}, 
    title={M\'etriques d'Einstein asymptotiquement sym\'etriques}, 
    journal={Ast\'erisque}, 
    number={265}, 
    date={2000}, 
    pages={vi+109},
    issn={0303-1179}, 
    review={\MR{1760319}}, 
}

\bib{BEGN}{article}{
   author={Bryant, Robert},
   author={Eastwood, Michael},
   author={Gover, A. Rod},
   author={Neusser, Katharina},
   title={Some differential complexes within and beyond parabolic geometry},
   eprint={arXiv:1112.2142},
}

\bib{Calderbank-Diemer}{article}{ 
  author={Calderbank, David M. J.},
  author={Diemer, Tammo}, 
  title={Differential invariants and curved Bernstein-Gelfand-Gelfand sequences}, 
  journal={J. Reine Angew. Math.}, 
  volume={537}, 
  date={2001}, 
  pages={67--103},
  issn={0075-4102}, 
  review={\MR{1856258 (2002k:58048)}}, }

\bib{Cap-Salac}{article}{
   author={\v Cap, Andreas},
   author={Sala\v c, Tom\'a\v s},
   title={Pushing down the Rumin complex to conformally symplectic
   quotients},
   journal={Differential Geom. Appl.},
   volume={35},
   date={2014},
   number={suppl.},
   pages={255--265},
   issn={0926-2245},
   review={\MR{3254307}},
}

\bib{book}{book}{
   author={{\v{C}}ap, Andreas},
   author={Slov{\'a}k, Jan},
   title={Parabolic geometries. I},
   series={Mathematical Surveys and Monographs},
   volume={154},
   note={Background and general theory},
   publisher={American Mathematical Society},
   place={Providence, RI},
   date={2009},
   pages={x+628},
   isbn={978-0-8218-2681-2},
   review={\MR{2532439 (2010j:53037)}},
}

\bib{CSS-BGG}{article}{
   author={{\v{C}}ap, Andreas},
   author={Slov{\'a}k, Jan},
   author={Sou{\v{c}}ek, Vladim{\'{\i}}r},
   title={Bernstein-Gelfand-Gelfand sequences},
   journal={Ann. of Math.},
   volume={154},
   date={2001},
   number={1},
   pages={97--113},
   issn={0003-486X},
   review={\MR{1847589 (2002h:58034)}},
}

\bib{Rel-BGG2}{article}{
   author={\v Cap, Andreas},
   author={Sou\v cek, Vladim\'\i r},
   title={Relative BGG sequences; II. BGG machinery and invariant operators},
   journal={Adv. Math.},
   volume={320},
   date={2017},
   pages={1009--1062},
   issn={0001-8708},
   review={\MR{3709128}},
}

\bib{CurvedCas}{article}{
   author={{\v{C}}ap, Andreas},
   author={Sou{\v{c}}ek, Vladim{\'{\i}}r},
   title={Curved Casimir operators and the BGG machinery},
   journal={SIGMA Symmetry Integrability Geom. Methods Appl.},
   volume={3},
   date={2007},
   pages={Paper 111, 17},
   issn={1815-0659},
   review={\MR{2366911 (2009d:22017)}},
   doi={10.3842/SIGMA.2007.111},
}

\bib{Gaillard:real}{article}{
   author={Gaillard, Pierre-Yves},
   title={Transformation de Poisson de formes differentielles. Le cas de
   l'espace hyperbolique},
   language={French},
   journal={Comment. Math. Helv.},
   volume={61},
   date={1986},
   number={4},
   pages={581--616},
   issn={0010-2571},
   review={\MR{870708}},
}

\bib{Gaillard:complex}{article}{
   author={Gaillard, Pierre-Y.},
   title={Eigenforms of the Laplacian on real and complex hyperbolic spaces},
   journal={J. Funct. Anal.},
   volume={78},
   date={1988},
   number={1},
   pages={99--115},
   issn={0022-1236},
   review={\MR{937634}},
}

\bib{GHV}{book}{
   author={Greub, Werner},
   author={Halperin, Stephen},
   author={Vanstone, Ray},
   title={Connections, curvature, and cohomology. Vol. I: De Rham cohomology
   of manifolds and vector bundles},
   note={Pure and Applied Mathematics, Vol. 47},
   publisher={Academic Press, New York-London},
   date={1972},
   pages={xix+443},
   review={\MR{0336650}},
}

\bib{Harrach:Srni}{article}{
   author={Harrach, Christoph},
   title={Poisson transforms for differential forms},
   journal={Arch. Math. (Brno)},
   volume={52},
   date={2016},
   number={5},
   pages={303--311},
   issn={0044-8753},
   review={\MR{3610865}},
}

\bib{Harrach:Diss}{thesis}{
   author={Harrach, Christoph},
   title={Poisson transforms for differential forms adapted to the flat parabolic
     geometries on spheres},
   type={PhD thesis}, 
   organization={University of Vienna},
   year={2017},
   eprint={http://othes.univie.ac.at/46417/},
}

\bib{Harrach:BGG}{article}{
   author={Harrach, Christoph},
   title={Poisson transforms adapted to BGG-complexes},
   journal={Differential Geom. Appl.},
   volume={64},
   date={2019},
   pages={92--113},
   issn={0926-2245},
   review={\MR{3913675}},
   doi={10.1016/j.difgeo.2019.02.005},
}

\bib{Helgason_GASS}{book}{
   author = {Helgason, Sigidur},
   title = {Geometric analysis on symmetric spaces},
   series = {Mathematical Surveys and Monographs}, 
   volume = {39},
   publisher = {American Mathematical Society},
   year = {1994},
}

\bib{Julg}{article}{
   author={Julg, Pierre},
   title={How to prove the Baum-Connes conjecture for $Sp(n,1)$?},
   journal={J. Geom. Phys.},
   status={to appear},
   date={2019},
   doi={10.1016/j.geomphys.2019.02.009},   
}

\bib{Julg-Kasparov}{article}{
   author={Julg, Pierre},
   author={Kasparov, Gennadi},
   title={Operator $K$-theory for the group ${\rm SU}(n,1)$},
   journal={J. Reine Angew. Math.},
   volume={463},
   date={1995},
   pages={99--152},
   issn={0075-4102},
   review={\MR{1332908}},
}

\bib{Kriegl-Michor}{book}{
   author={Kriegl, Andreas},
   author={Michor, Peter W.},
   title={The convenient setting of global analysis},
   series={Mathematical Surveys and Monographs},
   volume={53},
   publisher={American Mathematical Society, Providence, RI},
   date={1997},
   pages={x+618},
   isbn={0-8218-0780-3},
   review={\MR{1471480}},
   doi={10.1090/surv/053},
}

\bib{Matsushima-Murakami}{article}{
   author={Matsushima, Yoz\^{o}},
   author={Murakami, Shingo},
   title={On vector bundle valued harmonic forms and automorphic forms on
   symmetric riemannian manifolds},
   journal={Ann. of Math. (2)},
   volume={78},
   date={1963},
   pages={365--416},
   issn={0003-486X},
   review={\MR{0153028}},
   doi={10.2307/1970348},
}

\bib{Olbrich}{thesis}{
  author = {Olbrich, Martin},
  title = {Die {P}oisson-{T}ransformation f{\"u}r homogene {V}ektorb{\"u}ndel},
  type = {PhD thesis},
  organization={Humboldt-Universit{\"a}t Berlin},
  year = {1995},
}

\bib{Rumin}{article}{
   author={Rumin, Michel},
   title={Un complexe de formes diff\'erentielles sur les vari\'et\'es de contact},
   language={French, with English summary},
   journal={C. R. Acad. Sci. Paris S\'er. I Math.},
   volume={310},
   date={1990},
   number={6},
   pages={401--404},
   issn={0764-4442},
   review={\MR{1046521}},
}

\bib{vanderVen}{article}{
   author = {van der Ven, Harmen},
   title = {Vector valued {P}oisson transforms on {R}iemannian symmetric spaces of rank one},
   journal = {J. of Funct. Anal.},
   volume = {119},
   year = {1994},
   pages = {358--400},
   review = {\MR{1261097}}
}

\bib{Wells}{book}{
   author={Wells, R. O., Jr.},
   title={Differential analysis on complex manifolds},
   series={Graduate Texts in Mathematics},
   volume={65},
   edition={2},
   publisher={Springer-Verlag, New York-Berlin},
   date={1980},
   pages={x+260},
   isbn={0-387-90419-0},
   review={\MR{608414}},
}

\bib{Yang}{article}{
  author = {Yang, An},
  title = {Poisson transforms on vector bundles},
  journal = {Trans. Amer. Math. Soc.},
  volume = {350},
  number = {3},
  year = {1998},
  pages = {857--887},
  review={\MR{1370656}}
}

\end{biblist}
\end{bibdiv}

\end{document}